\documentclass{amsart}
\usepackage{amssymb}
\usepackage[usenames,dvipsnames]{xcolor}
\definecolor{darkblue}{rgb}{0.0, 0.0, 0.55}
\usepackage[pagebackref,colorlinks,linkcolor=BrickRed,citecolor=OliveGreen,urlcolor=darkblue,hypertexnames=true]{hyperref}
\usepackage{xurl}

\usepackage{enumerate}

\newcommand\N{\mathbb N}

\newcommand\R{\mathbb R}
\newcommand\C{\mathbb C}

\newcommand\ii{\mathrm i}
\def\1{\mathbf 1}

\newcommand\al\alpha
\newcommand\be\beta
\newcommand\ga\gamma
\newcommand\la\lambda
\newcommand\de\delta
\newcommand\ep\varepsilon
\newcommand\ph\varphi
\newcommand\si\sigma
\newcommand\ta\tau
\newcommand\rh\varrho
\newcommand\io\iota
\newcommand\La\Lambda

\DeclareMathOperator\trunc{trunc}

\DeclareMathOperator\tr{tr}
\DeclareMathOperator\hur{hur}

\DeclareMathOperator{\Sym}{Sym}

\theoremstyle{definition}
\newtheorem{thm}{Theorem}[section]

\newtheorem{cor}[thm]{Corollary}
\newtheorem{pro}[thm]{Proposition}
\newtheorem{rem}[thm]{Remark}
\newtheorem{ex}[thm]{Example}
\newtheorem{lem}[thm]{Lemma}
\newtheorem{df}[thm]{Definition}
\newtheorem{notation}[thm]{Notation}

\author[D. Sawall]{David Sawall}
\address{Technische Universit\"at Dresden, Germany}
\email{david.sawall@tu-dresden.de}

\author[M. Schweighofer]{Markus Schweighofer}
\address{Universität Konstanz, Germany}
\email{markus.schweighofer@uni-konstanz.de}

\title{Spectrahedral relaxations and tightenings of rigidly convex sets}
\date{October 27, 2026}
\subjclass[2020]{Primary 15A22, 52A20, 52A27; Secondary 12D10, 14P10, 90C22}
\keywords{real zero polynomial, rigidly convex set, spectrahedron, spectrahedral shadow, hyperbolic polynomial}

\begin{document}

\begin{abstract}
    We construct a natural spectrahedral relaxation of a rigidly convex set (rcs) that depends only on the cubic part and on the degree of the defining real zero polynomial.
    For quadratic polynomials, we prove exactness and for a product of linear polynomials, we show that a higher order version of our relaxation is exact.
    We also establish a way of tightening the relaxation by moving the origin inside the rcs, and that the rcs is recovered by sweeping the origin over its
     interior. To obtain all these results we use a linear form that is closely connected to the zeros of a given real zero polynomial. We also prove that any
      rcs can be written as an affine linear slice of the cone of non-negative polynomials. Using this, we can define a hierarchy of tightenings that are
      spectrahedral shadows. We can show that this tightening is exact if the given polynomial admits a determinantal representation.
      This tightening is similar to a tightening of a hyperbolicity cone due to Kummer, Plaumann, and Vinzant.
\end{abstract}

\maketitle

\section*{Introduction}
A polynomial $p\in\R[x_1,\ldots,x_n]$ is said to be \emph{real zero} if it does not vanish at the origin and if every restriction to a real line through the origin is real-rooted. Given a real zero polynomial $p$, the \emph{rigidly convex set} of $p$ is the set
\begin{equation*}
    C(p)=\{a\in\R^n\mid p(ta)\neq0\text{ for all }t\in(0,1)\}.
\end{equation*}
G\aa rding \cite{gar} showed that it is indeed convex. There is also a closely connected homogeneous version of real zero polynomials, called \emph{hyperbolic polynomials}. Spectrahedra containing the origin in their interior are rigidly convex sets, and spectrahedral cones are hyperbolicity cones. Recently, OpenAI \cite{OpenAI} uploaded a counterexample to the Generalized Lax Conjecture by constructing a hyperbolic polynomial whose hyperbolicity cone is not a spectrahedral cone. See also \cite{OpenAI2} for a hyperbolicity cone that is not a linear projection of a spectrahedral cone. Helton and Vinnikov proved in 2007 that every real zero polynomial in two variables admits a determinantal representation \cite{hv}, in particular every rigidly convex set in the plane is a spectrahedron. Kummer and Netzer \cite{KummerNetzer} used artificial intelligence to generate a proof that the hyperbolicity cone of every strictly hyperbolic polynomial is a spectrahedral cone.

Given a real zero polynomial $p\in\R[x_1,\ldots,x_n]$, we will construct a spectrahedron $S(p)$ described by an $(n+1)\times(n+1)$ linear matrix polynomial that contains $C(p)$ (Theorem \ref{relaxation}). Our main tool, that we introduce in Subsections \ref{ssLForm} and \ref{ssLFormTrace}, is a linear form that encodes the zeros of $p$. If $C$ is the companion matrix of the homogenization of $p$ with entries in $\R[x_1,\ldots,x_n]$, then
\begin{equation*}
    L_p(x_1^{\alpha_1}\cdots x_n^{\alpha_n})=\frac{1}{(\alpha_1+\ldots+\alpha_n)!}\partial_{x_1}^{\alpha_1}\cdots\partial_{x_n}^{\alpha_n}\tr((-C)^{\alpha_1+\ldots+\alpha_n}).
\end{equation*}
In general, $L_p$ does not come from a measure but if $p$ is a product of linear polynomials, then $L_p$ is just a sum of Dirac measures where the atoms come from the zeros of the linear polynomials. In this case, we use this to construct a finitely converging hierarchy of spectrahedral relaxations of the polyhedron $C(p)$ (Theorem \ref{relaxationhierarchy}). If $p=\det(I_d+x_1A_1+\ldots+x_nA_n)$ is of degree $d$, then $L_p$ can be computed via the trace of a polynomial expression of the $A_1,\ldots,A_n$.

In Section \ref{sQuadraticExact}, we will show that the relaxation is exact for quadratic polynomials and in Section \ref{sTighenRelax}, we tighten the relaxation by showing (a strengthening of)
\begin{equation*}
    C(p)=\bigcap_{a\in\operatorname{int}C(p)}\bigl(S(p(x+a))+a\bigr).
\end{equation*}

In Section \ref{sTightening}, we show
\begin{equation*}
    C(p)=\{a\in\R^n\mid L_p((1+a^Tx)(y^Tx-z_1)^2\cdots(y^Tx-z_{d-1})^2)\geq0\text{ on }\R^n\times\R^{d-1}\},
\end{equation*}
where $x=(x_1,\ldots,x_n)$ and $y=(y_1,\ldots,y_n)$ and where we have extended $L_p$ to $\R[x_1,\ldots,x_n,y_1,\ldots,y_n,z_1,\ldots,z_{d-1}]$ (Theorem \ref{RigidlyConvexSetAsSliceOfNonNegPolys}). By strengthening the nonnegativity condition, we obtain a hierarchy of inner approximations that are spectrahedral shadows (linear projections of spectrahedra). If the real zero polynomial has a determinantal representation, then every layer of this tightening is exact (Theorem \ref{DeterminantalTightening}). In Subsection \ref{ssHermiteBezout} we establish a connection of our linear form to Hermite and Bézout matrices that we use to show exactness of our tightening for an affine shift of elementary symmetric polynomials in Subsection \ref{ssElSym}. Finally, in Subsection \ref{sectionKPV}, we discuss related work by Kummer, Plaumann, and Vinzant.

\subsection*{Some notations}

We will always denote by $x=(x_1,\ldots,x_n)$ an $n$-tuple of variables and by $t$ a single variable. For $\alpha\in\N_0^n$, we write
\begin{align*}
    |\alpha|&:=\alpha_1+\ldots+\alpha_n,\\
    x^\alpha&:=x_1^{\alpha_1}\cdots x_n^{\alpha_n},\text{ and}\\
    \binom{|\alpha|}{\alpha}&:=\frac{|\alpha|!}{\alpha_1!\cdots\alpha_n!}.
\end{align*}
For a polynomial $f$ and a vector $v$ with one coordinate for each variable of $f$, we write $D_vf$ for the directional derivative of $f$ in the direction $v$.

\section{Real zero polynomials and spectrahedra}

\subsection{Definition and examples}

The following definition stems from \cite[§2.1]{hv}.

\begin{df}\label{dfrz}
We say that $p\in\R[x]$ is a \emph{real zero polynomial} if for all $a\in\R^n$ and $\la\in\C$,
\[p(\la a)=0\implies\la\in\R.\]
\end{df}

\begin{rem}\label{nonzeroatorigin}
If $p\in\R[x]$ is a real zero polynomial, then $p(0)\ne0$.
\end{rem}

\begin{pro}\label{splitrz}
Let $p\in\R[x]$. Then $p$ is a real zero polynomial if and only if for each $a\in\R^n$, the univariate polynomial
\[p(ta)=p(ta_1,\ldots,ta_n)\in\R[t]\]
splits (i.e., is a product of a non-zero constant and polynomials of degree one) in $\R[t]$.
\end{pro}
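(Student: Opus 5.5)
We prove both implications directly from Definition~\ref{dfrz}, using that a univariate real polynomial splits over $\R$ exactly when it is a nonzero constant times a product of linear factors, i.e.\ when it is nonzero and all its complex roots are real. Throughout, fix $a\in\R^n$ and set $q:=p(ta)\in\R[t]$. The observation linking the two conditions is that for $\la\in\C$ we have $q(\la)=p(\la a)$, so the complex roots of $q$ are exactly the $\la\in\C$ with $p(\la a)=0$.

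For ``$\Rightarrow$'', assume $p$ is real zero. By Remark~\ref{nonzeroatorigin}, $q(0)=p(0)\neq0$, so $q$ is not the zero polynomial. If $\deg q=0$, then $q$ is a nonzero constant and splits trivially. Otherwise, by the fundamental theorem of algebra, $q=c\prod_i(t-\la_i)$ over $\C$ with $c\neq0$. Each root satisfies $p(\la_i a)=q(\la_i)=0$, so $\la_i\in\R$ by the real zero property. Hence $c\in\R$ (as the leading coefficient of a real polynomial) and all linear factors are real, so $q$ splits in $\R[t]$.

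For ``$\Leftarrow$'', assume $p(ta)$ splits for every $a$, and let $p(\la a)=0$ with $\la\in\C$. Then $q(\la)=0$. Since $q$ splits, $q=c\prod_i(t-\mu_i)$ with $c\neq0$ and $\mu_i\in\R$; this product is nonzero, so from $q(\la)=0$ we get $\la=\mu_i$ for some $i$, hence $\la\in\R$.

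The only point needing care is that ``splits'' requires a \emph{nonzero} constant, which excludes the zero polynomial. In the forward direction this is supplied by Remark~\ref{nonzeroatorigin}. In the backward direction we must not let $q=0$ make every $\la$ a root; the splitting hypothesis rules this out, since it forces $q\neq0$. This also yields $p(0)\neq0$ without separate argument, so no further obstacle remains.
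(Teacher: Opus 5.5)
Your proof is correct, including the use of Remark~\ref{nonzeroatorigin} to rule out $p(ta)=0$ in the forward direction. The paper states this proposition without proof, treating it as immediate from Definition~\ref{dfrz} and the fundamental theorem of algebra, and that is exactly the argument you give.
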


\begin{ex}\label{quadraticrealzero}
Let $p\in\R[x]$ be a quadratic real zero polynomial with $p(0)=1$. Then $p$ can be uniquely written as
\[p=x^TAx+b^Tx+1\] with a symmetric matrix $A\in\R^{n\times n}$ and a vector $b\in\R^n$. For $a\in\R^n$ the univariate
quadratic polynomial $p(ta)=a^TAat^2+b^Tat+1$ splits in $\R[t]$ if and only if its discriminant $(b^Ta)^2-4a^TAa=a^T(bb^T-4A)a$ is
nonnegative. Hence $p$ is a real zero polynomial if and only if \[bb^T-4A\succeq0.\]
\end{ex}

The most obvious examples of real zero polynomials are products of linear polynomials that do not vanish at the origin. But this is just the special case where
all $A_i$ are diagonal matrices of the following more general example:

\begin{pro}\label{detexample}
Let $A_0,A_1,\ldots,A_n\in\C^{d\times d}$ be hermitian matrices such that $A_0$ is definite and
\[p=\det(A_0+x_1A_1+\ldots+x_nA_n),\]
then $p$ is a real zero polynomial.
\end{pro}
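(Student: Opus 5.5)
We check Definition \ref{dfrz} directly. Fix $a\in\R^n$ and $\la\in\C$ with $p(\la a)=0$; we must show $\la\in\R$. Put $B:=a_1A_1+\ldots+a_nA_n$, a hermitian matrix because the $a_i$ are real. Then
\[0=p(\la a)=\det(A_0+\la B).\]
In particular $\la\neq0$, since $\det A_0\neq0$ by definiteness. Replacing $A_0$ and all $A_i$ by their negatives multiplies $p$ by $(-1)^d$ and does not change its zeros, so we may assume $A_0$ is positive definite.

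Write $A_0=A_0^{1/2}A_0^{1/2}$ with $A_0^{1/2}$ hermitian positive definite. Multiplying on both sides by $A_0^{-1/2}$ gives
\[\det(A_0+\la B)=\det(A_0)\det\bigl(I_d+\la A_0^{-1/2}BA_0^{-1/2}\bigr).\]
Since $\det A_0\neq0$, the second determinant vanishes. The matrix $M:=A_0^{-1/2}BA_0^{-1/2}$ is hermitian, so its eigenvalues $\mu_1,\ldots,\mu_d$ are real. Hence
\[\det(I_d+\la M)=\prod_{j=1}^d(1+\la\mu_j)=0,\]
so $1+\la\mu_j=0$ for some $j$. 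This forces $\mu_j\neq0$ and $\la=-1/\mu_j\in\R$.

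An equivalent route, avoiding the square root, is to take $v\neq0$ with $(A_0+\la B)v=0$ and form $v^*A_0v+\la v^*Bv=0$. Here $v^*A_0v>0$ and $v^*Bv\in\R$, so $v^*Bv\neq0$ and $\la=-v^*A_0v/v^*Bv$ is real.

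There is no real obstacle. The only points needing care are using definiteness of $A_0$ to rule out $\la=0$ and reducing to the positive definite case; the rest is the reality of eigenvalues of hermitian matrices. We also note, for completeness, that $p$ has real coefficients: $p(x)$ for real $x$ is the determinant of a hermitian matrix, hence real, so $p\in\R[x]$.
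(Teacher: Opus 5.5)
Your proof is correct. The paper states Proposition \ref{detexample} without proof, and your argument is the standard one: congruence by $A_0^{-1/2}$ reduces to $\det(I_d+\la M)$ with $M$ hermitian, and then the eigenvalues of $M$ are real (or, equivalently, $\la=-v^*A_0v/v^*Bv$ via a kernel vector). This matches the normalization $A_i:=U_0^{-1/2}U_iU_0^{-1/2}$ used in Corollary \ref{DetCor} and the eigenvalue argument $\la=-1/\mu$ in the proof of Theorem \ref{relaxation}. Your side remark that $p\in\R[x]$ is also right, since a complex polynomial that is real-valued on $\R^n$ has real coefficients; the paper instead gets this in Proposition \ref{hur} by checking $p^*=p$.
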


A \emph{spectrahedron} in $\R^n$ is a set of the form
\[\{a\in\R^n\mid A_0+a_1A_1+\ldots+a_nA_n\succeq0\},\]
where $A_0,\ldots,A_n$ are hermitian matrices of the same size, i.e., the solution set of a \emph{linear matrix inequality}.
The term was coined by Ramana and Goldman \cite{rg}. Spectrahedra generalize polyhedra, which correspond to diagonal matrices $A_i$, and they are
exactly the feasible sets of semidefinite programs, which can be solved efficiently by interior point methods \cite{vb}. This is the main motivation
for describing a convex set by a linear matrix inequality or, as we do in this article, for approximating it by spectrahedra. We refer to \cite{np} for an introduction.

\begin{rem}\label{realsymmetric}
In the definition of a spectrahedron, one may equivalently require the matrices $A_0,\ldots,A_n$ to be real symmetric: writing $A_i=B_i+\ii C_i$ with real symmetric $B_i$ and real skew-symmetric $C_i$, the reader will easily check that the hermitian matrix $A_0+a_1A_1+\ldots+a_nA_n$ is positive semidefinite if and only if the real symmetric matrix
$\left(\begin{smallmatrix}B&-C\\C&B\end{smallmatrix}\right)$ of twice the size is, where $B:=B_0+a_1B_1+\ldots+a_nB_n$ and $C:=C_0+a_1C_1+\ldots+a_nC_n$. We allow hermitian matrices throughout because
determinantal representations with hermitian matrices are easier to come by than those with real symmetric matrices, compare Theorem \ref{vinnikov} and Corollary \ref{vcor} below.
\end{rem}

\subsection{The Helton--Vinnikov theorem}

The following celebrated partial converse to Proposition \ref{detexample}
has been obtained in 2007 by Helton and Vinnikov \cite[Theorem 2.2, §4]{hv}.
For a short account of the long history of partial results, we refer to \cite[§8]{hv}.
A purely algebraic proof of this theorem has been given by Hanselka \cite[Theorem 2]{han}.

\begin{thm}[Helton and Vinnikov]\label{vinnikov}
If $p\in\R[x_1,x_2]$ is a real zero polynomial of degree $d$ with $p(0)=1$, then there exist symmetric $A_1,A_2\in\R^{d\times d}$ such that
\[p=\det(I_d+x_1A_1+x_2A_2).\]
\end{thm}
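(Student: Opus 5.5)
Helton--Vinnikov is a deep theorem, and the paper cites it rather than proving it (it refers to \cite{hv} and to Hanselka's algebraic proof \cite{han}). So what I give here is the route I would take following the classical approach: algebraic geometry of plane curves, theta functions, and Vinnikov's description of definite determinantal representations.

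First I pass to the homogeneous setting. Put $P(x_0,x_1,x_2)=x_0^dp(x_1/x_0,x_2/x_1\cdot x_1/x_0)$, that is, the degree-$d$ homogenization of $p$. Then $P$ is hyperbolic with respect to $e=(1,0,0)$ and $P(e)=1$. The real zero property (Proposition \ref{splitrz}) says exactly that every real line through $e$ meets the projective curve $V=\{P=0\}\subseteq\mathbb P^2$ in $d$ real points, counted with multiplicity.

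Next comes a density and limit argument. The set of real zero polynomials of degree at most $d$ with $p(0)=1$ is closed. The strictly hyperbolic ones, whose curve $V$ is smooth and where every line through $e$ meets $V$ in $d$ distinct real points, are dense among them; one can perturb, for instance, by a small multiple of a product of real linear forms, or use Nuij-type operators. The set of polynomials of the form $\det(I_d+x_1A_1+x_2A_2)$ with symmetric $A_i$ is also closed, because the eigenvalues of the $A_i$ are bounded by the reciprocals of the roots along the coordinate directions. Hence the $A_i$ stay bounded along a convergent sequence of $p$'s, and compactness lets us pass to the limit. It therefore suffices to treat smooth, strictly hyperbolic $V$.

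For smooth $V$ of genus $g=(d-1)(d-2)/2$, hyperbolicity forces the real locus $V(\R)$ to consist of $\lfloor d/2\rfloor$ nested ovals, together with a pseudo-line if $d$ is odd. In particular $V$ is an M-curve, or of maximal dividing type. Determinantal representations $P=\det(x_0M_0+x_1M_1+x_2M_2)$ correspond to line bundles $\mathcal L$ of degree $g-1$ with $h^0(\mathcal L)=0$: the matrix is built from the kernel/cokernel sheaf, and symmetric matrices correspond to $\mathcal L$ with $\mathcal L^{\otimes2}\cong K_V$. The construction uses theta functions with characteristic, as in Vinnikov's formula for the matrix entries. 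One then chooses a real theta characteristic lying on the specific real torus of the Jacobian that Vinnikov singles out, the one associated with the ovals. For that choice the resulting $M_0$ is definite. Normalizing $M_0$ to $I_d$ by a congruence, and using $P(e)=1$, gives $p=\det(I_d+x_1A_1+x_2A_2)$.

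The main obstacle is this last step: producing an $\mathcal L$ with $h^0=0$ that is real and self-adjoint (theta-vanishing avoided, via Riemann's theorem on the theta divisor), and proving that $M_0$ is definite rather than merely real symmetric. Definiteness is where the nesting of the ovals enters. I would prove it by evaluating the associated Hermitian form on $V(\R)$ and checking the sign via the interlacing of the ovals with the lines through $e$. Alternatively, I would follow Hanselka's algebraic route, which constructs a positive definite symmetric bilinear form on $\R[V]$-modules directly from the interlacing.
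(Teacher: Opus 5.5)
Your skeleton is the route of \cite{hv}. That is what the paper relies on: it gives no proof of this theorem, cites \cite{hv} and \cite{han}, and deliberately works with the hermitian Corollary \ref{vcor} instead. The reduction steps are sound. Real zero polynomials of degree at most $d$ with $p(0)=1$ form a closed set. The set of all $\det(I_d+x_1A_1+x_2A_2)$ with real symmetric $A_i$ is closed by your eigenvalue bound and compactness. Smooth strictly hyperbolic curves are dense: get strict hyperbolicity from Nuij's operators \cite{nui}, then smoothness from a further generic perturbation, which is harmless because strict hyperbolicity is an open condition. Two side remarks on this step. Strict hyperbolicity alone does not give smoothness (two concentric circles are singular at the circular points). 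Adding a small multiple of a product of linear forms need not preserve hyperbolicity.

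One geometric claim is false. A smooth hyperbolic curve of degree $d\ge4$ is not an M-curve: it has $\lceil d/2\rceil$ real components, while Harnack's bound is $g+1$ (for a quartic, $2$ versus $4$). What is true, and what the theory uses, is that $V$ is of dividing type. Projection from $e$ maps $V(\C)\setminus V(\R)$ into $\mathbb P^1(\C)\setminus\mathbb P^1(\R)$, so this set splits into two halves $V_+$ and $V_-$ interchanged by conjugation.

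The genuine gap is the smooth case itself, which is where the whole theorem lives. Riemann's theorem cannot ``avoid theta-vanishing''. It only says that $\theta$ vanishes exactly at effective classes, so it turns $h^0(\mathcal L)=0$ into $\theta\ne0$ without proving either. What you need, and only assert, is the following.

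(i) Take the distinguished torus $T_0$ of bundles $\mathcal L$ with $\mathcal L\otimes\overline{\mathcal L}\cong K_V$ on which the hermitian pairing has the same sign on every oval, relative to the boundary orientation of $V_+$. No $\mathcal L\in T_0$ has a nonzero section. This needs a positivity argument, not Riemann's theorem. A section $s\ne0$ would give a holomorphic differential $\omega=s\otimes\bar s$ that is nonnegative on $V(\R)=\partial V_+$ and vanishes there at only finitely many points. That contradicts $\int_{\partial V_+}\omega=\int_{V_+}d\omega=0$.

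(ii) For $\mathcal L\in T_0$, the resulting self-adjoint representation has definite $M_0$; this is Vinnikov's theorem. Your proposed sign check ``via interlacing'' points in the right direction but is not yet an argument.

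(iii) $T_0$ contains a real theta characteristic at all. For $\zeta\in T_0$, the class $\bar\zeta-\zeta$ lies in the identity component of $\{\eta\mid\eta+\bar\eta=0\}$. This holds because $\zeta'\mapsto\bar\zeta'-\zeta'$ maps the connected $\mathrm{Pic}^{g-1}(V)$ into that group and vanishes at real classes. Choosing $\eta$ in that component with $2\eta=\bar\zeta-\zeta$ gives a real point $\zeta+\eta\in T_0$ with $2(\zeta+\eta)=\zeta+\bar\zeta$, i.e.\ a real theta characteristic. You also need that, for such $\mathcal L$, the symmetric and hermitian structures can be normalized compatibly. Only then do the matrices come out real symmetric rather than merely complex symmetric or hermitian.

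These three points are exactly what Vinnikov's work and \cite{hv} supply. Without them your proposal is a correct roadmap that ends in the same citation the paper makes, not a proof.
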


A weaker version of this theorem will be enough for most of our purposes.
Since this weaker version seems to be considerably easier to prove
(see mainly \cite{gkvw}, also \cite{pv} and \cite[§7]{han}), we state it here as a corollary.
In the following, we will always prefer to use the corollary instead of the theorem.

\begin{cor}[Helton and Vinnikov]\label{vcor}
If $p\in\R[x_1,x_2]$ is a real zero polynomial of degree $d$ with $p(0)=1$, then there exist hermitian matrices $A_1,A_2\in\C^{d\times d}$ such that
\[p=\det(I_d+x_1A_1+x_2A_2).\]
\end{cor}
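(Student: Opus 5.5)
The quickest route is to deduce Corollary \ref{vcor} directly from Theorem \ref{vinnikov}. Real symmetric matrices are in particular hermitian, so the real symmetric $A_1,A_2\in\R^{d\times d}$ supplied by Theorem \ref{vinnikov} already satisfy $p=\det(I_d+x_1A_1+x_2A_2)$ with hermitian coefficients. This is why the statement is phrased as a corollary. Since the paper wants to avoid relying on the full theorem, I would also sketch the independent route of \cite{gkvw}, \cite{pv}, \cite[§7]{han}.

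First, homogenize. Put $P(x_0,x_1,x_2)=x_0^d\,p(x_1/x_0,x_2/x_0)$. The real zero property of $p$ together with $p(0)=1$ makes $P$ hyperbolic with respect to $e=(1,0,0)$, with $P(e)=1$. Let $\Gamma\subseteq\mathbb P^2(\C)$ be the projective curve $P=0$; we want hermitian $A_1,A_2$ with $P=\det(x_0I_d+x_1A_1+x_2A_2)$.

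Second, construct the representation. First reduce, by a perturbation and closure argument, to the case where $\Gamma$ is smooth. Hyperbolic polynomials of degree $d$ with $P(e)=1$ form a closed set in which the strictly hyperbolic smooth ones are dense. The set of $(A_1,A_2)$ with the relevant normalisation and bounded norm is compact, because the eigenvalues of $x_1A_1+x_2A_2$ are the roots of $P(t,-x_1,-x_2)$ and are therefore bounded. So a limit of representations is a representation.

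On a smooth $\Gamma$ of genus $g=(d-1)(d-2)/2$, choose a line bundle $\mathcal L$ of degree $g-1$ with $h^0(\mathcal L)=0$. The standard construction (Dixon, Vinnikov) gives a matrix whose cokernel sheaf is $\mathcal L(1)$, with $P=\det(x_0M_0+x_1M_1+x_2M_2)$. One then chooses $\mathcal L$ compatibly with complex conjugation, so that there is a hermitian pairing on $H^0(\mathcal L(1))$ coming from the residue pairing. The hyperbolicity of $P$, that is, the interlacing of $\Gamma(\R)$ with lines through $e$, makes this pairing definite at $e$. After a change of basis this gives $M_0=I_d$ with $M_1,M_2$ hermitian.

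The main obstacle is this last step, proving definiteness at $e$. In \cite{pv} the approach is to start from the polynomial $D_eP$, which interlaces $P$, and build the pairing from the Bézout-type construction $P\cdot\partial Q-Q\cdot\partial P$ for a suitable interlacer $Q$. Hermitian rather than real symmetric coefficients give more freedom: one needs only a theta characteristic-free choice of $\mathcal L$, which is why the weaker corollary is easier. For the purposes of this paper, however, the one-line deduction from Theorem \ref{vinnikov} suffices as a proof.
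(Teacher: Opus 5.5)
Your one-line argument is exactly how the paper obtains Corollary \ref{vcor}: the paper states it as an immediate consequence of Theorem \ref{vinnikov}, since real symmetric matrices are hermitian, and gives no separate proof beyond pointing to \cite{gkvw}, \cite{pv} and \cite[\S7]{han}. Your sketch of the direct route is not needed for this, and it should not be counted as part of the proof, since its decisive step (definiteness of the pairing at $e$) is only named, not carried out.
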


\begin{ex}\label{hv2}
Consider a quadratic real zero polynomial $p\in\R[x_1,x_2]$ with $p(0)=1$. Write
\[p=x^TAx+b^Tx+1\] with a symmetric matrix $A\in\R^{2\times 2}$ and a vector $b\in\R^2$.
Write moreover \[A=\begin{pmatrix}a_{11}&a_{12}\\a_{12}&a_{22}\end{pmatrix}\qquad\text{and}\qquad
b=\begin{pmatrix}b_1\\b_2\end{pmatrix}.\]
Then $bb^T-4A\succeq0$ by Example \ref{quadraticrealzero}. Hence the leading principal minors $r:=b_1^2-4a_{11}$
and $s:=\det(bb^T-4A)$ are nonnegative. Suppose first that $r>0$ and consider the real symmetric matrices
\[A_1:=\frac12\begin{pmatrix}b_1-\sqrt r&0\\0&b_1+\sqrt r\end{pmatrix}\]
and
\[A_2:=\frac1{2r}
\begin{pmatrix}b_1^2b_2-b_1b_2\sqrt{r}-4 a_{11}b_2+4a_{12}\sqrt{r}&\sqrt{rs}\\
   \sqrt{rs}&b_1^2 b_2+b_1 b_2 \sqrt{r}-4 a_{11}b_2-4a_{12}\sqrt{r}\end{pmatrix}.
\]
Then one can easily verify that $p=\det(I_2+x_1A_1+x_2A_2)$. If $r=0$ but $bb^T-4A\ne0$, then an orthogonal change of
coordinates in $\R^2$ (which transforms a determinantal representation into a determinantal representation) reduces to the case $r>0$.
If $bb^T-4A=0$, then $p=(1+\frac12b^Tx)^2=\det(I_2+x_1\frac{b_1}2I_2+x_2\frac{b_2}2I_2)$.
\end{ex}

\subsection{Rigidly convex sets}

\begin{df}\label{dfrzs}
Let $p\in\R[x]$ be a polynomial.
Then we call
\[Z(p):=\{a\in\R^n\mid p(a)=0\}\]
the (real) \emph{zero set} defined by $p$.
\end{df}

\begin{df}\label{dfrcs}
Let $p\in\R[x]$ be a real zero polynomial.
Then we call
\[C(p):=\{a\in\R^n\mid\forall\la\in(0,1):p(\la a)\ne0\}\]
the \emph{rigidly convex set} defined by $p$.
\end{df}

A priori, it is not even clear that rigidly convex sets are convex. This was however already known to
G\aa rding. In the case where $p$ has a determinantal
representation of the kind considered in Proposition \ref{detexample} above, it is however easy to show that $C(p)$ is not only convex but even a spectrahedron:

\begin{pro}\label{rcsdet}
Let $d\in\N_0$, let $A_0,A_1,\ldots,A_n\in\C^{d\times d}$ be hermitian with $A_0\succ0$, and let
\[p=\det(A_0+x_1A_1+\ldots+x_nA_n).\] Then
\[C(p)=\{a\in\R^n\mid A_0+a_1A_1+\ldots+a_nA_n\succeq0\}\]
and
\[C(p)\setminus Z(p)=\{a\in\R^n\mid A_0+a_1A_1+\ldots+a_nA_n\succ0\}.\]
\end{pro}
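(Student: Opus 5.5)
The plan is to reduce everything to a single one-dimensional eigenvalue computation along rays from the origin. Fix $a\in\R^n$ and put $B:=a_1A_1+\ldots+a_nA_n$, which is hermitian. Since $A_0\succ0$, we can write $A_0=R^*R$ with $R$ invertible; for instance take $R=A_0^{1/2}$. Set $M:=R^{-*}BR^{-1}$, which is again hermitian with real eigenvalues $\mu_1,\ldots,\mu_d$. Then
\[A_0+\la B=R^*(I_d+\la M)R,\]
and therefore
\[p(\la a)=\det(A_0)\prod_{i=1}^d(1+\la\mu_i).\]
This is also how Proposition \ref{detexample} goes. Since $\det A_0>0$, the zeros of $\la\mapsto p(\la a)$ are exactly the numbers $-1/\mu_i$ with $\mu_i\ne0$.

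Now translate both sides of the claimed equality into conditions on the $\mu_i$.
\begin{enumerate}[(i)]
\item By Sylvester's law of inertia (congruence by $R$), $A_0+B\succeq0$ holds if and only if $I_d+M\succeq0$, i.e. if and only if $\mu_i\ge-1$ for all $i$. Likewise $A_0+B\succ0$ holds if and only if $\mu_i>-1$ for all $i$.
\item By definition, $a\in C(p)$ means $p(\la a)\ne0$ for all $\la\in(0,1)$. This says that no $-1/\mu_i$ lies in $(0,1)$.
\end{enumerate}
For a zero $-1/\mu_i$ to be positive we need $\mu_i<0$, and then $-1/\mu_i<1$ is equivalent to $\mu_i<-1$. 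So $a\in C(p)$ if and only if no $\mu_i<-1$, i.e. all $\mu_i\ge-1$. This gives the first equality.

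For the second equality, note that $p(a)=\det(A_0)\prod_i(1+\mu_i)$. Hence $a\notin Z(p)$ if and only if no $\mu_i=-1$. Combined with the first equality, this yields $a\in C(p)\setminus Z(p)$ if and only if all $\mu_i>-1$, which is equivalent to $A_0+B\succ0$ by (i).

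There is no real obstacle here. The only care needed is in the sign and interval bookkeeping: $\mu_i=0$ contributes no zero, and $\mu_i=-1$ gives the zero $\la=1$, which lies outside the open interval $(0,1)$. The case $d=0$ is trivial, since then $p$ is a constant, $C(p)=\R^n$, and the empty matrix condition holds vacuously.
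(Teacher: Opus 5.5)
Your argument is correct and complete, including the boundary bookkeeping: $\mu_i=-1$ gives the zero $\lambda=1$, which lies outside $(0,1)$ but places $a$ in $Z(p)$. The paper states this proposition without proof, calling it easy, but your reduction via the congruence $A_0=R^*R$ to the eigenvalues of $I_d+\lambda M$ is exactly the reasoning the paper itself uses in the proofs of Theorem~\ref{relaxation} and Lemma~\ref{HermitePD}, where an eigenvalue $\mu<-1$ (resp.\ $\mu\le-1$) produces the zero $\lambda=-1/\mu$ of $\lambda\mapsto p(\lambda a)$ in $(0,1)$ (resp.\ $(0,1]$).
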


\begin{thm}[G\aa rding \cite{gar}]\label{rzshift}
Let $p\in\R[x]$ be a real zero polynomial and $a\in C(p)\setminus Z(p)$. Then the shifted polynomial $p(x+a)$ is a real zero polynomial as well
and $C(p)=C(p(x+a))+a$.
\end{thm}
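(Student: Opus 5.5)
We must show: for $a\in C(p)\setminus Z(p)$, $q:=p(x+a)$ is real zero and $C(p)=C(q)+a$. The plan is to reduce both claims to two-dimensional slices and apply Corollary \ref{vcor}. Fix an arbitrary direction $b\in\R^n$ and consider the bivariate polynomial
\[r(s,t):=p(sa+tb)\in\R[s,t].\]
It suffices to show that $\la\mapsto q(\la b)=r(1,\la)$ has only real zeros, and that the points $a+b$ with $b\in C(q)$ are exactly the points of $C(p)$. Both questions concern only the plane spanned by $a$ and $b$; if $a,b$ are linearly dependent the argument below still goes through.

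First we check that $r$ is real zero. For $(s,t)\in\R^2$ and $\la\in\C$ we have $r(\la s,\la t)=p(\la(sa+tb))$, so $\la$ is real by Definition \ref{dfrz}. Also $r(0,0)=p(0)\ne0$, so after scaling $r(0)=1$. By Corollary \ref{vcor} there are hermitian $A_1,A_2$ with $r=\det(I_d+sA_1+tA_2)$, where $d=\deg r$. By Proposition \ref{rcsdet},
\[C(r)=\{(s,t)\mid I+sA_1+tA_2\succeq0\}.\]
Since $a\in C(p)\setminus Z(p)$, the point $(1,0)$ lies in $C(r)\setminus Z(r)$, because $r(\la,0)=p(\la a)$. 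Hence $M:=I+A_1\succ0$.

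Next we look at the shifted polynomial:
\[q(\la b)=r(1,\la)=\det(M+\la A_2).\]
Since $M$ is positive definite, Proposition \ref{detexample} applied to this univariate pencil shows that all zeros in $\la$ are real. Because $b$ was arbitrary, $q$ is real zero; note also $q(0)=p(a)\ne0$.

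Finally we identify the sets. By Proposition \ref{rcsdet}, $b\in C(q)$ iff $M+A_2\succeq0$, i.e.\ $(1,1)\in C(r)$, where we use that $q(\la b)=r(1,\la)$ and that $(1,\la)$ corresponds to $a+\la b$. On the other hand, $a+b\in C(p)$ iff $(1,1)\in C(r)$, since $p(\la(a+b))=r(\la,\la)$. Hence $b\in C(q)\iff a+b\in C(p)$, which gives $C(p)=C(q)+a$.

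The delicate bookkeeping lies in this last step: we need $C(r)$ (defined via rays from $0$) to match the LMI description for the pencil written at base point $M$. This is exactly Proposition \ref{rcsdet} applied once to $r$ with $A_0=I$, and once to $\la\mapsto\det(M+\la A_2)$ with $A_0=M\succ0$. In the second application we must pass from the one-dimensional $C$ along the ray to the single point $\la=1$, which is done by evaluating the LMI at $\la=1$. Degree drops in $r$ (when $d<\deg p$) are harmless, because Corollary \ref{vcor} uses $d=\deg r$.
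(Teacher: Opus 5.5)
Your proof is correct, but it takes a different route from the paper. The paper gives no proof of this statement and simply cites G\aa rding \cite{gar}. G\aa rding's classical argument works in the homogeneous setting. For the homogenization $P$, which is hyperbolic in direction $(1,0)$, an elementary continuity-of-roots argument shows that every point $(1,a)$ in the interior of the hyperbolicity cone is again a hyperbolicity direction with the same cone; dehomogenizing gives the statement. You instead restrict to the plane through $0$, $a$, $b$ and obtain a definite pencil $r=\det(I+sA_1+tA_2)$ from Corollary \ref{vcor}. Proposition \ref{rcsdet} turns $a\in C(p)\setminus Z(p)$ into $M=I+A_1\succ0$, so moving the origin to $a$ just replaces the base matrix $I$ by $M$. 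All three of your equivalences check out ($q$ real zero, $b\in C(q)$ exactly when $(1,1)\in C(r)$, and $(1,1)\in C(r)$ exactly when $a+b\in C(p)$), including when $a,b$ are dependent or the degree of $r$ drops. This is the same plane-reduction device the paper uses in Theorem \ref{relaxation} and Lemma \ref{HermitePD}, so within this paper's toolkit your proof is short and natural. The cost is that an elementary classical fact now rests on the Helton--Vinnikov theorem. That is admissible here, because Corollary \ref{vcor} is imported as a black box and Proposition \ref{rcsdet} is elementary. G\aa rding's route, by contrast, is self-contained, works directly in any number of variables, and does not make basic structural properties of rigidly convex sets depend on determinantal representations. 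It therefore also avoids any worry that the proof of Corollary \ref{vcor} one relies on might itself use this kind of shift invariance.
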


\subsection{Hyperbolic polynomials and hyperbolicity cone}
Hyperbolic polynomials are a homogeneous version of real zero polynomials and date back to G\aa rding. Hyperbolicity cones replace the notion of rigidly convex sets.

\begin{df}\label{dfhb}
Let $p\in\R[x]$ be homogeneous. We call a vector $e\in\R^n$ with $e\ne0$
a \emph{hyperbolicity direction} of $p$ if for all
$a\in\R^n$ and $\la\in\C$,
\[p(a-\la e)=0\implies\la\in\R.\]
In this case, we call $p$ \emph{hyperbolic with respect to $e$} or \emph{hyperbolic in direction $e$}.
We call $p$ \emph{hyperbolic} if it is hyperbolic with respect to some direction.
\end{df}

\begin{rem}\label{nonzeroate}
\begin{enumerate}[(a)]
\item If $p\in\R[x]$ is hyperbolic with respect to $e\in\R^n$, then $p(e)\ne0$.
\item If $p\in\R[x]$ is hyperbolic, then the number of variables $n$ is of course greater or equal to $1$ since $\R^0$ contains
only the zero vector which can never be a hyperbolicity direction by definition.
\item For homogeneous $p\in\R[x]$, hyperbolicity is equivalent to
\[p(\la a-e)=0\implies\la\in\R\]
for all $a\in\R^n$ and $\la\in\C$ because a polynomial is real-rooted if and only if its reciprocal is real-rooted.
\item From the previous item it follows immediately that if $p\in\R[x]$ is hyperbolic with respect to $e\in\R^n$, then $p(x+e)$ is real zero.
\end{enumerate}
\end{rem}

\begin{pro}\label{detexamplehyperbolic}
Let $A_1,\ldots,A_n\in\C^{d\times d}$ be hermitian matrices and $e\in\R^n$
such that $e_1A_1+\ldots+e_nA_n$ is definite. Then
\[p:=\det(x_1A_1+\ldots+x_nA_n)\in\R[x]\]
is hyperbolic (of degree $d$) in direction $e$.
\end{pro}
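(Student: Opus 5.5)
The plan is to reduce the statement to the spectral theorem for hermitian matrices. Set $E:=e_1A_1+\ldots+e_nA_n$. First, $p$ is homogeneous of degree $d$: each entry of $x_1A_1+\ldots+x_nA_n$ is a linear form, so every term in the Leibniz expansion of the determinant is a product of $d$ linear forms. Moreover $p(e)=\det E\ne0$ because $E$ is definite, so $p\ne0$ and its degree is exactly $d$. Replacing $A_1,\ldots,A_n$ by their negatives multiplies $p$ by $(-1)^d$ and does not change the zeros, so we may assume $E\succ0$.

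Now fix $a\in\R^n$ and $\la\in\C$ with $p(a-\la e)=0$, and put $A:=a_1A_1+\ldots+a_nA_n$, which is hermitian. Then
\[0=p(a-\la e)=\det(A-\la E).\]
Since $E\succ0$, it has a positive definite hermitian square root $E^{1/2}$. Factoring gives
\[\det(A-\la E)=\det(E)\,\det\bigl(E^{-1/2}AE^{-1/2}-\la I_d\bigr).\]
Hence $\la$ is an eigenvalue of the hermitian matrix $E^{-1/2}AE^{-1/2}$, and therefore $\la\in\R$. By Definition \ref{dfhb}, this is exactly the statement that $p$ is hyperbolic in direction $e$, since $e\ne0$ is forced by the definiteness of $E$.

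Alternatively, one could use Remark \ref{nonzeroate}(d) in reverse and reduce to Proposition \ref{detexample}. Since $p$ is homogeneous, real-rootedness of $t\mapsto p(a-te)$ is equivalent to real-rootedness of $t\mapsto p(e+ta)$ by passing to the reciprocal polynomial. The latter is $\det(E+tA)$, which is a real zero polynomial by Proposition \ref{detexample}. This route needs some care: the reciprocal argument must account for a possible drop in degree, and $\la=0$ has to be handled separately. The direct eigenvalue argument above avoids this.

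There is no real obstacle. The only points needing care are the reduction to $E\succ0$ and the observation that $p$ has real coefficients. The latter holds because $p(x)$ is real for real $x$: the determinant of a hermitian matrix is real.
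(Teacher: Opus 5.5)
Your proof is correct and complete. The paper states this proposition without proof, treating it as a standard fact alongside Proposition \ref{detexample}, so there is no argument in the paper to compare with. Your argument is the standard one. After normalizing to $E=e_1A_1+\ldots+e_nA_n\succ0$, the identity $p(a-\lambda e)=\det(E)\det(E^{-1/2}AE^{-1/2}-\lambda I_d)$ shows that every root $\lambda$ is an eigenvalue of a hermitian matrix, hence real. Your justification of real coefficients differs from the conjugation argument $p^*=p$ used in the proof of Proposition \ref{hur}, but it is valid: a complex polynomial that is real-valued on $\R^n$ has real coefficients.

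Two minor comments. First, the caveats you list for the alternative route are already handled by Remark \ref{nonzeroate}(c). Since $p(\lambda a-e)=\det(\lambda A-E)=(-1)^d\det(E+\lambda(-A))$, Proposition \ref{detexample} applied at the point $-a$ gives $\lambda\in\R$ at once. Still, Proposition \ref{detexample} is itself unproved in the paper and rests on the same eigenvalue computation, so your direct route is the more self-contained choice.

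Second, the claim that definiteness of $E$ forces $e\ne0$ needs $d\ge1$. For $d=0$ the statement is degenerate: $p=1$, and if the empty matrix counts as definite, $e=0$ would satisfy the hypothesis. So the proposition is implicitly meant for $d\ge1$.
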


\begin{df}\label{dfhbc}
Let $p\in\R[x]$ be hyperbolic in direction $e$.
Then we call
\[C(p,e):=\{a\in\R^n\mid\forall\la\in\R:(p(a-\la e)=0\implies\la\ge0)\}\]
the \emph{hyperbolicity cone} of $p$ with respect to $e$.
\end{df}

Hyperbolicity cones are closed convex cones with nonempty interior. Closedness follows from the continuous dependence of the roots of
$\la\mapsto p(a-\la e)$ on $a$ (the degree of this polynomial is $\deg p$ for every $a$, with leading coefficient $\pm p(e)\ne0$), and convexity is
G\aa rding's theorem \cite{gar}, see also \cite[Theorem 2]{ren}. The interior of $C(p,e)$ is
\[\{a\in\R^n\mid\forall\la\in\R:(p(a-\la e)=0\implies\la>0)\},\]
which contains $e$ and is the connected component of $e$ in $\{a\in\R^n\mid p(a)\ne0\}$ \cite[Proposition 1]{ren}; the cone $C(p,e)$ is its closure.

\begin{pro}\label{hbrz}
Let $n\ge1$ and let $p\in\R[x]$ be homogeneous. Then $p$ is hyperbolic in direction of the first unit vector $u$ of $\R^n$
if and only if its dehomogenization \[q:=p(1,x_2,\ldots,x_n)\in\R[x_2,\ldots,x_n]\]
is a real zero polynomial.
\end{pro}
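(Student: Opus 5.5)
The plan is to relate the two root conditions through homogeneity, using the identity $p(\la,\la x_2,\ldots,\la x_n)=\la^d p(1,x_2,\ldots,x_n)$. Here $d:=\deg p$, and we may assume $p\ne0$. I write a point of $\R^n$ as $a=(a_1,a')$ with $a'\in\R^{n-1}$, and $u=(1,0,\ldots,0)$. First note that $p(u)=q(0)$. Every point $a$ satisfies $a-\la u=(a_1-\la,a')$, so
\[p(a-\la u)=p(a_1-\la,a').\]
Substituting $\mu:=a_1-\la$, which is real exactly when $\la$ is real, hyperbolicity in direction $u$ becomes the following condition: for every $a'\in\R^{n-1}$, every complex root $\mu$ of $p(\mu,a')$ is real. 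One also needs $p(u)\ne0$. This is implicit in the definition, since otherwise $p(\la u)=\la^dp(u)$ vanishes for all $\la$, including nonreal ones; it is stated as Remark~\ref{nonzeroate}(a).

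For ``$\Leftarrow$'' I assume $q$ is real zero, so $q(0)\ne0$ by Remark~\ref{nonzeroatorigin}, hence $p(u)\ne0$. Fix $a'$ and let $\mu\in\C$ with $p(\mu,a')=0$. If $\mu=0$ it is real. Otherwise, by homogeneity (which holds over $\C$ as a polynomial identity),
\[0=p(\mu,a')=\mu^d\,p(1,\mu^{-1}a')=\mu^d\,q(\mu^{-1}a').\]
Thus $q(\mu^{-1}a')=0$, and Definition~\ref{dfrz} applied to the point $a'$ gives $\mu^{-1}\in\R$, hence $\mu\in\R$.

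For ``$\Rightarrow$'' I assume $p$ is hyperbolic in direction $u$. Let $a'\in\R^{n-1}$ and $\la\in\C$ with $q(\la a')=0$. Then $\la\ne0$, because $q(0)=p(u)\ne0$. Homogeneity gives $p(\la^{-1},a')=\la^{-d}q(\la a')=0$. So $\mu=\la^{-1}$ is a root of $p(\mu,a')$, which is real by the reformulation above, and therefore $\la\in\R$. This is the same reciprocal trick as in Remark~\ref{nonzeroate}(c).

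The only delicate point is the bookkeeping at $\mu=0$ (respectively $\la=0$) and the use of $p(u)\ne0$. The zero root must be handled separately in each direction, and there the nonvanishing at $u$ and at the origin is exactly what is required. There is no real obstacle beyond this.
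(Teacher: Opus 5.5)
Your proof is correct and complete: the substitution $\mu=a_1-\la$, the homogeneity identity with the reciprocal $\mu=\la^{-1}$, and the separate treatment of the zero root via $p(u)=q(0)\ne0$ are all handled properly (and the case $p=0$ you set aside is harmless, since then neither side holds). The paper states Proposition~\ref{hbrz} without proof, and your argument is exactly the reciprocal-root reasoning it relies on in Remark~\ref{nonzeroate}(c).
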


\section{The relaxation}

\subsection{The linear form associated to a polynomial}\label{ssLForm}

All power series are formal. We refer to \cite[§3]{god}, \cite[§6.1]{rob} and \cite[§I.2]{rui} for an introduction to power series.

\begin{df}\label{dftrunc}
Suppose $a_\al\in\R$ for all $\al\in\N_0^n$ and consider the power series
\[p=\sum_{\al\in\N_0^n}a_\al x^\al\in\R[[x]].\] Then we call for $d\in\N_0$, the polynomial
\[\trunc_dp:=\sum_{\substack{\al\in\N_0^n\\|\al|\le d}}a_\al x^\al\in\R[x]\] the \emph{truncation of $p$ at degree $d$}.
\end{df}

\begin{df}\label{dflogexp}
Let $p\in\R[[x]]$ be a power series.
\begin{enumerate}[(a)]
\item
If $p$ has constant coefficient $0$, then the power series
\[\exp p:=\sum_{k=0}^\infty\frac{p^k}{k!}\in\R[[x]]\]
is well-defined because all monomials appearing in $p^k$ have degree at least $k$ so that
\[\trunc_d \exp p=\trunc_d\sum_{k=0}^d\frac{p^k}{k!}\]
for all $d\in\N_0$. We call it the \emph{exponential} of $p$.
\item
If $p$ has constant coefficient $1$, then the power series
\[\log p:=\sum_{k=1}^\infty\frac{(-1)^{k+1}}k(p-1)^k\in\R[[x]]\]
is well-defined because all monomials appearing in $(p-1)^k$ have degree at least $k$ so that
\[\trunc_d\log p=\trunc_d\sum_{k=1}^d\frac{(-1)^{k+1}}k(p-1)^k\]
for all $d\in\N_0$. We call it the \emph{logarithm} of $p$.
\end{enumerate}
\end{df}

The following can be found for example in \cite[Lemma 4.1]{god},
\cite[§5.4.2, Proposition 2]{rob} or \cite[§6.1.3]{rob}.

\begin{pro}\label{logexp}
Consider the sets
\begin{align*}
A&:=\{p\in\R[[x]]\mid\trunc_0p=0\}\qquad\text{and}\\
B&:=\{p\in\R[[x]]\mid\trunc_0p=1\}.
\end{align*}
Then the following hold:
\begin{enumerate}[(a)]
\item $\exp\colon A\to B$ and $\log\colon B\to A$ are inverse to each other.
\item $\exp(p+q)=(\exp p)(\exp q)$ for all $p,q\in A$.
\item $\log(pq)=(\log p)+(\log q)$ for all $p,q\in B$.
\end{enumerate}
\end{pro}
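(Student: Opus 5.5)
The plan is to prove all three statements in Proposition \ref{logexp} by reducing them to identities of formal power series in one variable, or in two variables for (b) and (c). First, both maps are well-defined: by Definition \ref{dflogexp}, $\exp p$ has constant coefficient $1$ and $\log p$ has constant coefficient $0$. Second, for every $d$, the truncation $\trunc_d$ of any composite expression depends only on $\trunc_d$ of its inputs. So it suffices to verify each identity modulo the ideal $I_{d+1}$ of power series with all monomials of degree $>d$, for every $d$. Modulo $I_{d+1}$, all the infinite sums are finite polynomial expressions.

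The key tool is a substitution principle. Let $F,G\in\R[[s]]$ (or $\R[[s,u]]$) be univariate (resp. bivariate) formal series, and let $q\in A$ (or $q_1,q_2\in A$). Then $F(q)$ is well defined, and the map $F\mapsto F(q)$ is a ring homomorphism $\R[[s]]\to\R[[x]]$. In addition, $(F\circ G)(q)=F(G(q))$ whenever $G$ has zero constant term. I would justify this by reducing modulo $I_{d+1}$ and $(s)^{d+1}$: the statements become statements about polynomials, where they are clear, and the substitution is compatible with the truncations since $q^k\in I_k$.

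With this principle, (a) reduces to the one-variable identities $\exp(\log(1+s))=1+s$ and $\log(\exp s)=s$ in $\R[[s]]$. These are classical. One way to prove them formally is with the formal derivative $\frac{d}{ds}$. Set $E=\exp(s)$ and $\ell=\log(1+s)$. Then $E'=E$ and $\ell'=(1+s)^{-1}$, by termwise differentiation. Put $g=\exp(\ell)$. By the chain rule for substitution, $g'=g\cdot\ell'$, so $\bigl(g(1+s)^{-1}\bigr)'=0$. A series with zero derivative over $\R$ is constant, and comparing constant terms gives $g=1+s$. The other identity follows similarly, or by injectivity arguments.

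For (b), I would use the bivariate identity $\exp(s+u)=\exp(s)\exp(u)$ in $\R[[s,u]]$, which is the binomial theorem applied coefficientwise, and then substitute $s=p$, $u=q$. Part (c) then follows from (a) and (b). Write $p=\exp P$ and $q=\exp Q$ with $P=\log p$ and $Q=\log q$. Then $pq=\exp(P+Q)$ by (b), and applying $\log$ together with (a) gives $\log(pq)=P+Q$.

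The main obstacle is the rigorous justification of the substitution and chain-rule step in the formal setting, namely that composition respects products and derivatives. I would handle it entirely through truncation: everything modulo $I_{d+1}$ is a finite polynomial computation, and there ordinary polynomial algebra applies.
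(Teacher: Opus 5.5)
Your proposal is correct and follows the paper's second suggested route for (a): reduce to the univariate identities via formal composition, prove those by formal differentiation, and then derive (c) from (a) and (b) exactly as the paper does. The only difference is cosmetic and concerns (b). The paper applies the binomial expansion directly to $p$ and $q$ in $\R[[x]]$, so your detour through $\R[[s,u]]$ followed by substitution is unnecessary but harmless.
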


\begin{proof}
(a) can be proven in two different ways: One way is to play it back to known facts about converging power series from calculus
\cite[§5.4.2, Proposition 2]{rob}. Note that the argument given in \cite[Lemma 4.1]{god} looks innocent but in reality needs
good knowledge of multivariate power series \cite[§I.1, §I.2]{rui}. The other way to prove it is by using formal composition and derivation of power series:
Deduce the result from the univariate case $n=1$ by
formally deriving the formal composition (in either way) of the univariate logarithmic and exponential power series \cite[§6.1.3]{rob}.

(b) is an easy calculation:
\begin{multline*}
\exp(p+q)=\sum_{k=0}^\infty\frac{(p+q)^k}{k!}=\sum_{k=0}^\infty\frac1{k!}\sum_{i=0}^k\binom kip^iq^{k-i}\\
=\sum_{k=0}^\infty\sum_{i=0}^k\frac{p^i}{i!}\frac{q^{k-i}}{(k-i)!}=(\exp p)(\exp q).
\end{multline*}

(c) follows easily from (a) and (b).
\end{proof}

\begin{df}\label{dfriesz}
Let $p\in\R[[x]]$ satisfy $p(0)\ne0$ and let $d\in\N_0$. We define the linear form $L_{p,d}$ on $\R[x]$ \emph{associated to $p$
with respect to the virtual degree $d$}
by specifying it on the monomial basis of $\R[x]$, namely by setting
\[L_{p,d}(1)=d\]
and by requiring the identity of formal power series
\[-\log\frac{p(-x)}{p(0)}=\sum_{\substack{\al\in\N_0^n\\\al\ne0}}\frac1{|\al|}\binom{|\al|}\al L_{p,d}(x^\al)x^\al\]
to hold. If $p\in\R[x]$, then we call $L_p:=L_{p,\deg p}$ the linear form \emph{associated to $p$}.
\end{df}

\begin{pro}\label{logprop}
\begin{enumerate}[(a)]
\item If $p,q\in\R[[x]]$ satisfy $p(0)\ne0\ne q(0)$ and $d,e\in\N_0$, then \[L_{pq,d+e}=L_{p,d}+L_{q,e}.\]
\item If $p,q\in\R[x]$ satisfy $p(0)\ne0\ne q(0)$, then \[L_{pq}=L_p+L_q.\]
\end{enumerate}
\end{pro}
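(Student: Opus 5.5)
The plan is to compare the defining power series identities of Definition \ref{dfriesz} monomial by monomial; both sides of (a) are linear forms on $\R[x]$, so it suffices to check agreement on the monomial basis. The constant monomial and the nonconstant monomials are handled separately.

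For the monomial $1$, the definition gives directly
\[L_{pq,d+e}(1)=d+e=L_{p,d}(1)+L_{q,e}(1).\]

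For $\al\ne0$, first note that $(pq)(0)=p(0)q(0)\ne0$, so $L_{pq,d+e}$ is defined. Next observe the factorization
\[\frac{(pq)(-x)}{(pq)(0)}=\frac{p(-x)}{p(0)}\cdot\frac{q(-x)}{q(0)},\]
in which both factors are power series with constant coefficient $1$. Proposition \ref{logexp}(c) then gives
\[-\log\frac{(pq)(-x)}{(pq)(0)}=-\log\frac{p(-x)}{p(0)}-\log\frac{q(-x)}{q(0)}.\]
Expanding each side via the defining identity of Definition \ref{dfriesz} and comparing the coefficient of $x^\al$ yields
\[\frac1{|\al|}\binom{|\al|}{\al}L_{pq,d+e}(x^\al)=\frac1{|\al|}\binom{|\al|}{\al}\bigl(L_{p,d}(x^\al)+L_{q,e}(x^\al)\bigr).\]
The prefactor $\frac1{|\al|}\binom{|\al|}{\al}$ is nonzero, so we can divide by it. Linearity then finishes (a).

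For (b), use that $\R[x]$ is an integral domain, so $\deg(pq)=\deg p+\deg q$ for nonzero $p,q$; nonzeroness holds because $p(0)\ne0\ne q(0)$. Hence
\[L_{pq}=L_{pq,\deg p+\deg q}=L_{p,\deg p}+L_{q,\deg q}=L_p+L_q\]
by (a).

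There is no real obstacle here. The only point requiring care is checking that the substitution $x\mapsto -x$ and the normalization by the constant term commute with taking products, so that the logarithm rule Proposition \ref{logexp}(c) applies to series with constant coefficient exactly $1$.
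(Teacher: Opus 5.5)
Your proposal is correct and follows essentially the same route as the paper: both reduce (a) to the additivity of the formal logarithm from Proposition \ref{logexp}(c) applied to the normalized series, and both deduce (b) from $\deg(pq)=\deg p+\deg q$. The only differences are presentational: you carry the normalization by $p(0)$ and $q(0)$ explicitly where the paper assumes $p(0)=1=q(0)$ without loss of generality, and you check the constant monomial separately.
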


\begin{proof}
Part (b) follows from (a) by observing that $\deg(pq)=\deg p+\deg q$ for all $p,q\in\R[x]$.
To prove (a), we suppose WLOG $p(0)=1=q(0)$ and thus $(pq)(0)=1$. 
By Definition \ref{dfriesz},
we then have to show that the following identity of formal power series holds:
\[-\log((pq)(-x))=-\log(p(-x))-\log(q(-x)).\]
This follows from Proposition \ref{logexp}(c).
\end{proof}

\subsection{Linear forms and traces}\label{ssLFormTrace}

\begin{lem}\label{convergence}
Let \[\sum_{k=1}^\infty a_kt^k\in\C[[t]]\qquad(a_1,a_2,\ldots\in\C)\] be a univariate power series with positive radius of convergence. Let
$p\in\C[x]$ be a polynomial with $p(0)=0$.
Then there is some $\ep>0$ such that for each $z\in\C^n$ with $\|z\|_\infty<\ep$, the series
\[\sum_{k=1}^\infty a_kp(z)^k\]
is absolutely convergent even after fully expanding $a_kp(z)^k$ into the obvious sum of $m^k$ many terms where $m$ is the number of monomials in p.
\end{lem}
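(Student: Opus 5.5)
Let $R>0$ be the radius of convergence of $\sum_k a_kt^k$, and write
\[p=\sum_{j=1}^m c_jx^{\be_j}\qquad(c_j\in\C\setminus\{0\},\ \be_j\in\N_0^n\setminus\{0\}),\]
where the $\be_j$ are pairwise distinct and each is nonzero because $p(0)=0$. The plan is to dominate the fully expanded series by a convergent series of nonnegative reals, obtained by replacing every term by its absolute value.

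\textbf{Step 1 (dominating polynomial).} Put $\tilde p(s):=\sum_{j=1}^m|c_j|s^{|\be_j|}$, a univariate real polynomial in $s\ge0$. It has nonnegative coefficients and no constant term, so it is continuous, nondecreasing on $[0,\infty)$, and $\tilde p(0)=0$. Hence there is some $\ep>0$ with $\tilde p(\ep)<R$.

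\textbf{Step 2 (bounding the expanded terms).} Fix $z\in\C^n$ with $\|z\|_\infty<\ep$. Expanding $a_kp(z)^k$ gives the $m^k$ terms
\[a_k\,c_{j_1}\cdots c_{j_k}\,z^{\be_{j_1}+\ldots+\be_{j_k}}\qquad\bigl((j_1,\ldots,j_k)\in\{1,\ldots,m\}^k\bigr).\]
Since $|z^\be|\le\|z\|_\infty^{|\be|}$, each such term has absolute value at most $|a_k|\,|c_{j_1}|\cdots|c_{j_k}|\,\ep^{|\be_{j_1}|+\ldots+|\be_{j_k}|}$. Summing over all $k$-tuples, the expanded terms of $a_kp(z)^k$ have total absolute value at most $|a_k|\tilde p(\ep)^k$.

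\textbf{Step 3 (conclusion).} Because $0\le\tilde p(\ep)<R$, the power series $\sum_k|a_k|r^k$ converges at $r=\tilde p(\ep)$. By Step 2, the full double series of absolute values $\sum_k\sum_{(j_1,\ldots,j_k)}|\cdot|$ is bounded by this convergent sum. Therefore the fully expanded series is absolutely convergent, and it is so independently of the order and grouping of its terms.

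The only delicate point is to bound the expanded terms individually, using $\tilde p$, rather than bounding $|p(z)|$ itself. A bound on $|p(z)|$ would only control the grouped series $\sum_k a_kp(z)^k$. The rest is routine; in particular, the choice $\tilde p(\ep)<R$ (strict inequality) is what places $\tilde p(\ep)$ inside the disk of convergence.
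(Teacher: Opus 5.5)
Your proof is correct and is essentially the paper's majorant argument: you bound the $k$-th block of expanded absolute values by $|a_k|\tilde p(\ep)^k$ with $\tilde p(\ep)<R$. The paper instead fixes $\rh<r$ and uses continuity to get $|t_i(z)|\le\rh/m$ for each monomial term, which bounds that block by $m^k|a_k|(\rh/m)^k=|a_k|\rh^k$; both proofs then conclude from convergence of $\sum_k|a_k|\rh^k$.
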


\begin{proof}
We have to bound the finite partial sums of the absolute values of the individual terms from above (see for example \cite[Definition 8.2.4]{t1} or \cite[§I.1]{rui}).
Write $p=t_1+\ldots+t_m$ where each $t_i$ involves only one monomial of $p$. 
We have to find $\ep>0$ and $C\in\R$ such that for each $z\in\C^n$ with $\|z\|_\infty<\ep$ and for each
$\ell\in\N$, we have
\[\sum_{k=1}^\ell\sum_{i_1=1}^m\ldots\sum_{i_k=1}^m|a_kt_{i_1}(z)\dotsm t_{i_k}(z)|\le C.\]
WLOG $m>0$. Let $r>0$ denote the radius of convergence of the univariate power series.
Choose $\rh\in\R$ with $0<\rh<r$. As $p(0)=0$, each $t_i$ vanishes at the origin.
By continuity, we can choose $\ep>0$ such that \[|t_i(z)|\le\frac\rh m\] for all $i\in\{1,\ldots,m\}$ and $z\in\C^n$ with $\|z\|_\infty<\ep$.
Since a univariate power series converges absolutely inside the radius of convergence
\cite[Theorem 4.1.6(b)]{t2}, we can set $C:=\sum_{k=1}^\infty |a_k|\rh^k<\infty$.
For each $\ell\in\N$ and $z\in\C^n$ with $\|z\|_\infty<\ep$, we have
\begin{align*}
\sum_{k=1}^\ell\sum_{i_1=1}^m\ldots\sum_{i_k=1}^m|a_kt_{i_1}(z)\dotsm t_{i_k}(z)|&\le
\sum_{k=1}^\ell m^k|a_k|\left(\frac\rh m\right)^k=\sum_{k=1}^\ell |a_k|\rh^k\le C.
\end{align*}
\end{proof}

\begin{df}\label{hurwitz}
Let $A_1,\dots,A_n\in\C^{d\times d}$ and $\al\in\N_0^n$. The $\al$-\emph{Hurwitz product} of $A_1,\ldots,A_n$ is the matrix
that arises as follows: First, form all words in $n$ letters where the $i$-th letter appears exactly $\al_i$ times. Then turn
each word into a product of matrices by substituting $A_i$ for the $i$-th letter. Finally, sum up all matrices that arise in this way.
Formally, we can define it as
\[\hur_\al(A_1,\ldots,A_n):=\sum_{\substack{f\colon\{1,\ldots,|\al|\}\to\{1,\ldots,n\}\\\forall i\in\{1,\ldots,n\}:\#f^{-1}(i)=\al_i}}A_{f(1)}\dotsm A_{f(|\al|)}\in\C^{d\times d}.\]
In particular, $\hur_0(A_1,\ldots,A_n)=I_d$.
\end{df}

\begin{pro}\label{hur}
Suppose $d\in\N_0$ and $A_1,\ldots,A_n\in\C^{d\times d}$ are hermitian. Then
\[p:=\det(I_d+x_1A_1+\ldots+x_nA_n)\in\R[x]\]
and
\[L_{p,d}(x^\al)=\frac1{\binom{|\al|}\al}\tr(\hur_\al(A_1,\ldots,A_n))\]
for all $\al\in\N_0^n$.
\end{pro}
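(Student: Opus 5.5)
We will verify the identity of formal power series required by Definition \ref{dfriesz}, working analytically near the origin and invoking Lemma \ref{convergence} to move between analytic and formal identities. First, $p\in\R[x]$: since the $A_i$ are hermitian, $\overline{p(a)}=\det(\overline{I_d+\sum a_iA_i})=\det((I_d+\sum a_iA_i)^T)=p(a)$ for real $a$. Hence $p$ is a polynomial with complex coefficients that takes real values on $\R^n$, so its coefficients are real. Also $p(0)=1$, so $L_{p,d}$ is defined. Since $\tr(\hur_0)=\tr I_d=d=L_{p,d}(1)$, the case $\al=0$ holds, and it remains to check the coefficients of $x^\al$ for $\al\ne0$.

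Set $M(x):=x_1A_1+\ldots+x_nA_n$, so that $p(-x)=\det(I_d-M(x))$. For $z\in\C^n$ small, every eigenvalue $\mu$ of $M(z)$ satisfies $|\mu|<1$. The identity $\det=\exp\tr\log$ then gives
\[-\log\det(I_d-M(z))=-\sum_j\log(1-\mu_j)=\sum_{k\ge1}\frac1k\sum_j\mu_j^k=\sum_{k\ge1}\frac1k\tr(M(z)^k).\]
Expanding $M(z)^k=\sum_{|\al|=k}\hur_\al(A_1,\ldots,A_n)z^\al$ is just the definition of the Hurwitz product: collect the words $A_{f(1)}\cdots A_{f(k)}z_{f(1)}\cdots z_{f(k)}$ by their multidegree. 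The right-hand side therefore equals $\sum_{\al\ne0}\frac1{|\al|}\tr(\hur_\al)z^\al$. Comparing with $\frac1{|\al|}\binom{|\al|}{\al}L_{p,d}(x^\al)$ gives exactly the claimed formula.

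The main obstacle is to make this analytic computation yield an identity of \emph{formal} power series, where $\log$ is the formal logarithm of Definition \ref{dflogexp} applied to $p(-x)$. We plan to handle it as follows. Write $q:=p(-x)-1$, a polynomial with $q(0)=0$. Applying Lemma \ref{convergence} to the univariate series $\log(1+t)=\sum_k\frac{(-1)^{k+1}}kt^k$, which has radius of convergence $1$, the series $\sum_k\frac{(-1)^{k+1}}kq(z)^k$ converges absolutely after full expansion for $\|z\|_\infty<\ep$. We may therefore rearrange it by monomials. This shows that the formal series $\log p(-x)$ converges near $0$ to the analytic function $\log p(-z)$, the principal branch, which is continuous and equals $0$ at $z=0$. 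The series $\sum_k\frac1k\tr(M(z)^k)$ converges absolutely after expansion as well: bound $\|M(z)\|$ by $\sum|z_i|\|A_i\|$, or apply Lemma \ref{convergence} entrywise with $t^k/k$. Its sum is also a branch of $-\log\det(I_d-M(z))$ that vanishes at $0$, because its exponential is $\prod_j(1-\mu_j)^{-1}$. On a small connected neighborhood of $0$, two continuous logarithms of the same nonvanishing function that agree at $0$ coincide. Hence the two convergent power series agree as functions near $0$, and by uniqueness of coefficients of convergent multivariate power series they agree formally.
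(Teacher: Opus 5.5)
Your proposal is correct and follows essentially the same route as the paper: you prove the identity of formal power series by evaluating both sides near the origin, using Lemma~\ref{convergence} to justify rearranging the formal logarithm by monomials, then $\tr(M^k)=\sum_j\mu_j^k$ and the identity theorem for convergent power series. The only difference is that you evaluate at complex points, which is why you need the branch-of-logarithm continuity argument. The paper evaluates only at real $a$, where $M(a)$ is hermitian with eigenvalues in $(-1,1)$ and $p(-a)>0$, so real logarithms suffice and no branch issue arises.
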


\begin{proof}
For $q\in\C[x]$, we denote by $q^*\in\C[x]$ the polynomial which arises from $q$ by applying the complex conjugation to the coefficients. We have
\begin{align*}
p^*&=(\det(I_d+x_1A_1+\ldots+x_nA_n))^*=(\det((I_d+x_1A_1+\ldots+x_nA_n)^T))^*\\
&=(\det(I_d+x_1A_1^T+\ldots+x_nA_n^T))^*=\det(I_d+x_1A_1^*+\ldots+x_nA_n^*)\\
&=\det(I_d+x_1A_1+\ldots+x_nA_n)=p
\end{align*}
and therefore $p\in\R[x]$. It is easy to see that Hurwitz products of hermitian matrices are again hermitian
and therefore have real diagonal entries and hence real trace.

It is clear that $L_{p,d}(1)=d=\tr(I_d)=\tr(\hur_0(A_1,\ldots,A_n))$.
By Definition \ref{dfriesz}, it remains to show that
\[(*)\qquad-\log(p(-x))=\sum_{\substack{\al\in\N_0^n\\\al\ne0}}\frac1{|\al|}
\tr(\hur_\al(A_1,\ldots,A_n))x^\al.\]
The real multivariate power series on both sides converge absolutely in a neighborhood of the origin in $\R^n$. For the left hand side this follows from
Lemma \ref{convergence} by recollecting terms belonging to the same monomial. For the right hand side, we argue as follows: The number of words of
length $k$ in $n$ letters is $n^k$.
If the entries of each $A_i$ are bounded in absolute value by $c>0$, then the entries of a product of the $A_i$
with $k$ many factors are bounded in absolute value by $d^{k-1}c^k$. Then the trace of such a product is bounded by $(dc)^k$. Hence we get
\[\sum_{\substack{\al\in\N_0^n\\|\al|=k}}\left|\frac1{|\al|}\tr(\hur_\al(A_1,\ldots,A_n))a^\al\right|\le(cdn)^k\|a\|_\infty^k\le\left(\frac12\right)^k
\]
for all $a\in\R^n$ with $\|a\|_\infty\le\frac1{2cdn}$.

By the identity theorem for multivariate real power series \cite[Proposition I.2.9]{rui}, it suffices to show that
both series in $(*)$ converge absolutely to the same value in a neighborhood of the origin in $\R^n$. It is a subtle issue that uses the substitution of convergent power series \cite[Proposition I.2.6]{rui} to show that for all $a$ in a neighborhood of the origin in $\R^n$, the left hand side of $(*)$
evaluated at $a$ (i.e., $(-\log(p(-x)))(a)$) equals $-\ln(p(-a))$ where $\log$ stands for the
operation on power series defined in Definition \ref{dflogexp} and $\ln$ for the usual real logarithm.
On the other hand, the right hand side of $(*)$ evaluates at $a$ from a small neighborhood of the
origin to
\[\sum_{\substack{\al\in\N_0^n\\\al\ne0}}\frac1{|\al|}
\tr(\hur_\al(a_1A_1,\ldots,a_nA_n))=\sum_{k=1}^\infty\frac1k\tr((a_1A_1+\ldots+a_nA_n)^k).\]
It now suffices to fix $a\in\R^n$ such that the hermitian matrix
\[B:=a_1A_1+\ldots+a_nA_n\in\C^{d\times d}\] is of operator norm strictly less than $1$ (or equivalently
has all eigenvalues in the open real interval $(-1,1)$) and to show that
\[-\ln(\det(I_d-B))=\sum_{k=1}^\infty\frac1k\tr(B^k).
\]
Since the operator norm is sub-multiplicative, the matrix
$C:=\sum_{k=1}^\infty\frac1k B^k\in\C^{d\times d}$ exists. Obviously, $C$ is hermitian and its eigenvalues, listed according to their algebraic multiplicity,  arise from the eigenvalues of $I_d-B$ by taking minus the
real logarithm. 
Since determinant and trace are the product and sum, respectively, of the eigenvalues counted with algebraic multiplicity, we thus get the result.
\end{proof}

\begin{df}
    Let $d\in\N_0$ and $p\in\R[x]$ be a polynomial of degree at most $d$ with $p(0)=1$. Then the \emph{degree-$d$ companion matrix of $p$} is the companion matrix $C\in\R[x]^{d\times d}$ of the degree-$d$ homogenization of $p$ which we consider as a univariate polynomial with coefficients in $\R[x]$. That is, if $p=1+p_1+\ldots+p_d$ with $p_i\in\R[x]_i$ for $i=1,\ldots,d$, then
    \begin{equation*}
        C=\begin{pmatrix}
            0&\cdots&\cdots&0&-p_d\\
            1&\ddots&&\vdots&\vdots\\
            0&\ddots&\ddots&\vdots&\vdots\\
            \vdots&\ddots&\ddots&0&\vdots\\
            0&\cdots&0&1&-p_1
        \end{pmatrix}\in\R[x]^{d\times d}.
    \end{equation*}
    Note that from linear algebra it is well-known that $P=\det(tI-C)$, where $P\in\R[t,x]$ is the degree-$d$ homogenization of $p$. Therefore, we have $p=\det(I-C)$. If $p(0)\neq0$ then the \textit{degree-$d$ companion matrix of $p$} is the degree-$d$ companion matrix of $\frac p{p(0)}$. If $d=\deg p$, then we call the degree-$d$ companion matrix the \emph{companion matrix of $p$}.
\end{df}

\begin{pro}\label{LFormDifferentiation}
    Let $d\in\N_0$ and let $p\in\R[x]$ be a polynomial of degree at most $d$ with $p(0)\ne0$. Let $C\in\R[x]^{d\times d}$ be its degree-$d$ companion matrix. Then
    \begin{equation*}
        L_{p,d}(x^\alpha)=\frac{1}{|\alpha|!}\partial^\alpha\tr((-C)^{|\alpha|})
    \end{equation*}
    for all $\alpha\in\N_0^n$.
\end{pro}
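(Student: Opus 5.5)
We first reduce to $p(0)=1$: replacing $p$ by $p/p(0)$ changes neither $L_{p,d}$ (Definition \ref{dfriesz} only involves $p(-x)/p(0)$ and $L_{p,d}(1)=d$) nor, by definition, the degree-$d$ companion matrix. For $\al=0$ both sides equal $d$, since $(-C)^0=I_d$ has trace $d$. So let $\al\ne0$ and $k:=|\al|$.

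The core of the argument is the formal identity
\[-\log p(-x)=\sum_{k=1}^\infty\frac1k\tr\bigl(\trunc\text{-appropriate part of }(-C(-x))^k\bigr),\]
which we get by adapting the last step of the proof of Proposition \ref{hur}. Since $p=\det(I_d-C)$ and $C$ is linear in the homogeneous components $p_1,\dots,p_d$, we introduce an auxiliary variable $s$ and put $C_s:=C(sx)$. The entries of $C_s$ other than the subdiagonal ones are $-s^ip_i(x)$, and these are not all divisible by $s$. To fix this, conjugate by $D=\operatorname{diag}(1,s,\dots,s^{d-1})$. Then $D^{-1}C_sD=sC$ exactly, which is the standard homogeneity of the companion matrix of a homogeneous form: the $(i,j)$ entry of $C$ is homogeneous of degree $j-i+1$. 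Consequently $\tr(C^k)$ is a homogeneous polynomial of degree $k$ in $x$, and
\[\det(I_d-sC)=P(1,sx)\cdot 1=p(sx),\]
using that $P(t,x)=\det(tI-C)$ is homogeneous so that $\det(I-sC)=s^dP(1/s,x)=p(sx)$.

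Now work in $\R[x][[s]]$. The matrix identity $-\log\det(I_d-sB)=\sum_{k\ge1}\frac{s^k}{k}\tr(B^k)$ holds formally for any matrix $B$ over a commutative $\Q$-algebra. One can prove it via $\det=\exp\tr\log$ for unipotent formal matrices, or via Newton's identities relating power sums to the coefficients of the characteristic polynomial. Alternatively we avoid formal matrix logarithms and argue numerically as in Proposition \ref{hur}: for fixed real $x$ and small real $s$, $sC(x)$ has small norm, so $-\ln\det(I-sC(x))=\sum_k\frac{s^k}{k}\tr(C(x)^k)$ by diagonalizing over $\C$ (Jordan form, with eigenvalues counted by algebraic multiplicity). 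Applying this with $B=-C$ gives
\[-\log p(-sx)=\sum_{k\ge1}\frac{s^k}{k}\tr((-C)^k).\]
Setting $s=1$ is legitimate because $\tr((-C)^k)$ is homogeneous of degree $k$ in $x$, so the series is a well-defined formal power series in $x$. Comparing with the left-hand side of the series identity in Definition \ref{dfriesz}, the degree-$k$ homogeneous part gives
\[\sum_{|\al|=k}\frac1k\binom{k}{\al}L_{p,d}(x^\al)x^\al=\frac1k\tr((-C)^k).\]

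Finally we extract the coefficient of $x^\al$. Since $\tr((-C)^k)$ is homogeneous of degree $k=|\al|$, applying $\partial^\al$ kills every monomial except $x^\al$, which it sends to $\al!$. This gives
\[\partial^\al\tr((-C)^k)=\al!\binom{k}{\al}L_{p,d}(x^\al)=k!\,L_{p,d}(x^\al),\]
which is the claim.

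The main obstacle is the middle step: justifying $-\log\det(I-sB)=\sum\frac{s^k}{k}\tr B^k$ for a non-hermitian polynomial matrix. We intend to handle it by the numerical route via the Jordan form and the identity theorem, exactly as at the end of the proof of Proposition \ref{hur}. Care is needed with the homogeneity bookkeeping after the conjugation by $D$.
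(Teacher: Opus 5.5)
Your proposal is correct and follows the paper's proof. Both normalize to $p(0)=1$, pass to $\R[x][[s]]$ via $x\mapsto sx$, use $p(-sx)=\det(I+sC)$ and $-\log\det(I+sC)=\sum_{m\ge1}\frac{s^m}{m}\tr((-C)^m)$, then descend by homogeneity of $\tr((-C)^m)$ and compare coefficients of $x^\alpha$. The only difference is that the paper proves the log-det identity purely formally, by differentiating in $s$ (logarithmic derivative, Jacobi's formula, and $(I+sC)^{-1}=\sum_{j\ge0}(-s)^jC^j$), instead of via eigenvalues or the analytic identity theorem.

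Three small fixes. Your first display should just read $-\log p(-x)=\sum_{k\ge1}\frac1k\tr((-C)^k)$, with no truncation needed. The conjugation is $DC(sx)D^{-1}=sC$, not $D^{-1}C(sx)D=sC$. And ``setting $s=1$'' on the left-hand side uses that $f\mapsto f(sx)$ is an injective ring homomorphism $\R[[x]]\to\R[x][[s]]$ commuting with the formal logarithm, which the paper states explicitly.
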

\begin{rem}
    In the previous proposition, $\tr((-C)^m)$ is a homogeneous polynomial of degree $m$. One can see this by observing the degree structure of the matrix $C$ and its powers.
\end{rem}
\begin{proof}[Proof of Proposition \ref{LFormDifferentiation}]
    If $d=0$, then $p$ is a non-zero constant and both sides vanish. Assume henceforth that $d\ge1$. Replacing $p$ by $\frac p{p(0)}$ does not change either side: the companion matrix is defined to be that of $\frac p{p(0)}$, and Definition \ref{dfriesz} gives $L_{p,d}=L_{q,d}$ for $q:=\frac p{p(0)}$. Assume therefore that $p(0)=1$. The case $\alpha=0$ is the equality $L_{p,d}(1)=d=\tr(I_d)$.

    We first show the identity
    \begin{equation}
        -\log(p(-x))=\sum_{m\ge1}\frac1m\tr((-C)^m)\tag{$\ast$}
    \end{equation}
    in $\R[[x]]$. By the previous remark, $\tr((-C)^m)$ is homogeneous of degree $m$, so the right hand side is a well-defined formal power series. Let $s$ be a new variable and consider the substitution homomorphism
    \[\io\colon\R[[x]]\to\R[x][[s]],\qquad f\mapsto f(sx),\]
    which sends a power series $\sum_{m\ge0}f_m$ with homogeneous components $f_m$ of degree $m$ to $\sum_{m\ge0}f_ms^m$. It is an injective ring homomorphism, and it commutes with the formal logarithm (defined on power series with constant coefficient $1$ in $\R[x][[s]]$ by the same series as in Definition \ref{dflogexp}), since it preserves constant coefficients and maps $(x)^k$ into $s^k\R[x][[s]]$ for all $k$. Applying $\io$ to both sides of $(\ast)$ and using the homogeneity of $\tr((-C)^m)$ once more, it therefore suffices to prove
    \begin{equation}
        -\log p(-sx)=\sum_{m\ge1}\frac{s^m}m\tr((-C)^m)\tag{$\ast\ast$}
    \end{equation}
    in $\R[x][[s]]$. From $\det(tI-C)=t^dp\left(\frac xt\right)$ we obtain the polynomial identity $p(sx)=\det(I-sC)$, hence $p(-sx)=\det(I+sC)$. Both sides of $(\ast\ast)$ have constant coefficient $0$ with respect to $s$, so it is enough to compare their derivatives with respect to $s$. The formal logarithm satisfies $(\log q)'=\frac{q'}q$, and Jacobi's formula $(\det M)'=\det(M)\tr(M^{-1}M')$ holds for every $M\in\R[x][[s]]^{d\times d}$ with invertible constant term. Together with $(I+sC)^{-1}=\sum_{j\ge0}(-s)^jC^j$ this gives
    \begin{multline*}\frac{d}{ds}\bigl(-\log\det(I+sC)\bigr)=-\tr\bigl((I+sC)^{-1}C\bigr)\\=-\sum_{m\ge1}(-s)^{m-1}\tr(C^m)=\sum_{m\ge1}s^{m-1}\tr((-C)^m),\end{multline*}
    which is the derivative of the right hand side of $(\ast\ast)$. This proves $(\ast\ast)$ and hence $(\ast)$.

    Now fix $\alpha\ne0$, let $m:=|\alpha|$ and write $\tr((-C)^m)=\sum_{|\beta|=m}\frac{\partial^\beta\tr((-C)^m)}{\beta!}\,x^\beta$. The coefficient of $x^\alpha$ on the right-hand side of $(\ast)$ is
    \[\frac1m\cdot\frac{\partial^\alpha\tr((-C)^m)}{\alpha!}.\]
    By Definition \ref{dfriesz}, the coefficient of $x^\alpha$ on the left-hand side is
    \[\frac1m\binom{m}{\alpha}L_{p,d}(x^\alpha)=\frac{(m-1)!}{\alpha!}L_{p,d}(x^\alpha).\]
    Comparing the two coefficients yields $L_{p,d}(x^\alpha)=\frac{\partial^\alpha\tr((-C)^m)}{m!}$.
\end{proof}

\begin{ex}\label{ex:LFormNotMeasure}
    One easily checks that $L_p(p^2)=-\frac23$ for $p=1-x_1^2-x_2^2$. This shows that $L_p$ need not come from a measure.
\end{ex}

\begin{notation}
    Let $q\in\R[x]$ be a polynomial. Then we define $q(\partial):=q(\partial_1,\ldots,\partial_n)$ to be the differential operator obtained by replacing $x_i$ by $\partial_i$ for all $i=1,\ldots,n$. We also write $\partial$ for the column vector of partial derivatives $\partial_1,\ldots,\partial_n$.
\end{notation}

\begin{pro}\label{LFormIsApolarInnerProduct}
    Let $d\in\N_0$ and let $p\in\R[x]$ be a polynomial of degree at most $d$ with $p(0)\ne0$. Let $q\in\R[x]_k$ be a homogeneous polynomial of degree $k$. Denote by $C$ the degree-$d$ companion matrix of $p$. Then
    \begin{equation*}
        L_{p,d}(q)=(k!)^{-1}q(\partial)\tr((-C)^k).
    \end{equation*}
\end{pro}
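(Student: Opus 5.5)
By linearity in $q$, this reduces to Proposition \ref{LFormDifferentiation} applied monomial by monomial. Both sides of the claimed identity are linear in $q\in\R[x]_k$. The left side is linear because $L_{p,d}$ is a linear form. The right side is linear because $q\mapsto q(\partial)$ is a linear map from $\R[x]_k$ into the space of differential operators. So it suffices to check the identity on the monomial basis $x^\alpha$ with $|\alpha|=k$ of $\R[x]_k$.

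For $q=x^\alpha$ with $|\alpha|=k$, the operator $q(\partial)$ is exactly $\partial^\alpha=\partial_1^{\alpha_1}\cdots\partial_n^{\alpha_n}$. The claimed identity then reads
\[L_{p,d}(x^\alpha)=\frac1{k!}\partial^\alpha\tr((-C)^{k})=\frac1{|\alpha|!}\partial^\alpha\tr((-C)^{|\alpha|}),\]
which is precisely the statement of Proposition \ref{LFormDifferentiation}. The degenerate case $k=0$ ($q$ a constant $c$) is covered as well: both sides equal $c\,d$, since $\tr((-C)^0)=\tr(I_d)=d$ and $L_{p,d}(1)=d$.

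Writing a general $q=\sum_{|\alpha|=k}c_\alpha x^\alpha$ and summing the monomial identities weighted by $c_\alpha$ gives the result. There is no real obstacle. The only point needing care is that $q(\partial)$ is defined via the same substitution $x_i\mapsto\partial_i$, so it respects the decomposition of $q$ into monomials. This holds because the $\partial_i$ commute and therefore $q\mapsto q(\partial)$ is a well-defined ring homomorphism.
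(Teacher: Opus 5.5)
Your proposal is correct and follows essentially the same approach as the paper: expand $q=\sum_{|\alpha|=k}c_\alpha x^\alpha$, apply Proposition \ref{LFormDifferentiation} to each monomial, and recombine by linearity, using $q(\partial)=\sum_{|\alpha|=k} c_\alpha\partial^\alpha$. Your separate remark on the case $k=0$ is harmless, since that case is already covered by Proposition \ref{LFormDifferentiation} with $\alpha=0$.
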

\begin{rem}
    In other words, $L_{p,d}(q)$ is the apolar inner product of $q$ and $\tr((-C)^k)$.
\end{rem}
\begin{proof}
    Write $f:=\tr((-C)^k)$ and $q=\sum_{|\alpha|=k}c_\alpha x^\alpha$.
    For every $\alpha$ with $|\alpha|=k$, Proposition \ref{LFormDifferentiation} gives
    \[L_{p,d}(x^\alpha)=\frac{\partial^\alpha f}{k!}.\]
    Hence
    \[L_{p,d}(q)=\sum_{|\alpha|=k}c_\alpha\frac{\partial^\alpha f}{k!}=\frac1{k!}q(\partial)f.\]
\end{proof}

\begin{pro}\label{RestrictLFormToLine}
    Let $d\in\N_0$ and let $p\in\R[x]$ be a polynomial of degree at most $d$ with $p(0)\ne0$. Let $a\in\R^n$ and $q\in\R[t]$. Denote by $C$ the degree-$d$ companion matrix of $p$. Then
    \begin{equation*}
        L_{p,d}(q(a^Tx))=\tr(q(-C(a))).
    \end{equation*}
\end{pro}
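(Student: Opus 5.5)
By linearity of both sides in $q$, it suffices to treat monomials $q=t^k$, $k\in\N_0$. For $k=0$ the left side is $L_{p,d}(1)=d$ and the right side is $\tr(I_d)=d$. So we fix $k\ge1$ and must show
\[L_{p,d}\bigl((a^Tx)^k\bigr)=\tr\bigl((-C(a))^k\bigr).\]

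The polynomial $(a^Tx)^k$ is homogeneous of degree $k$, so Proposition \ref{LFormIsApolarInnerProduct} applies. With $f:=\tr((-C)^k)$, which is homogeneous of degree $k$ in $x$ by the remark after Proposition \ref{LFormDifferentiation}, it gives
\[L_{p,d}\bigl((a^Tx)^k\bigr)=\frac1{k!}(a^T\partial)^kf=\frac1{k!}D_a^kf.\]

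The key step is the elementary fact that for a form $f$ of degree $k$, the $k$-fold directional derivative $D_a^kf$ is the constant $k!\,f(a)$. To see this, expand $f(x+sa)$ in the variable $s$. By Taylor's formula the coefficient of $s^k$ is $\frac1{k!}D_a^kf(x)$. By homogeneity, the coefficient of $s^k$ is also the top-degree part in $s$ of $f(x+sa)$, namely $f(a)$. Alternatively, one can check the identity directly on monomials via the multinomial theorem, which is the same computation as the coefficient comparison at the end of the proof of Proposition \ref{LFormDifferentiation}.

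It follows that
\[L_{p,d}\bigl((a^Tx)^k\bigr)=f(a)=\tr\bigl((-C(a))^k\bigr),\]
since evaluation at $a$ commutes with taking matrix powers and traces of the polynomial matrix $C$. Summing over the monomials of $q$ gives $L_{p,d}(q(a^Tx))=\tr(q(-C(a)))$.

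No step is a serious obstacle. The only care needed is the homogeneity claim together with the identity $D_a^kf=k!\,f(a)$; the rest is bookkeeping with linearity. The degenerate case $d=0$ needs no separate treatment: $C$ is then the empty matrix, and both sides equal $0\cdot q(0)=0$, consistent with $L_{p,0}(1)=0$ and $L_{p,0}$ vanishing on nonconstant monomials.
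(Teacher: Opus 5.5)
Your proof is correct and follows essentially the same route as the paper: reduce to $q=t^k$, apply Proposition \ref{LFormIsApolarInnerProduct} to $(a^Tx)^k$ with $f=\tr((-C)^k)$ homogeneous of degree $k$, and use the identity $(a^T\partial)^kf=k!\,f(a)$ for forms of degree $k$. Your Taylor-expansion justification of that identity and your remark on $d=0$ are accurate details that the paper leaves implicit.
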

\begin{proof}
    By linearity, it suffices to treat $q=t^k$ for some $k\in\N_0$. The case $k=0$ is $L_{p,d}(1)=d=\tr(I_d)$. Now let $k\ge1$ and set $f:=\tr((-C)^k)$. By the remark after Proposition \ref{LFormDifferentiation}, $f$ is homogeneous of degree $k$, and every homogeneous polynomial of degree $k$ satisfies $f(a)=\frac{(a^T\partial)^kf}{k!}$. Proposition \ref{LFormIsApolarInnerProduct} therefore gives
    \[L_{p,d}((a^Tx)^k)=(k!)^{-1}(a^T\partial)^kf=f(a)=\tr((-C(a))^k).\]
\end{proof}

\begin{pro}\label{applyMatrixToLFrom}
    Let $m,n,d\in\N_0$ and $A\in\R^{m\times n}$. Let $p\in\R[x_1,\ldots,x_n]$ satisfy $p(0)\ne0$, and let $q$ be a polynomial in $m$ variables. Writing $y=(y_1,\ldots,y_m)$,
    \[L_{p(A^Ty),d}(q)=L_{p,d}(q(Ax)).\]
\end{pro}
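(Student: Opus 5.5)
Both sides are linear in $q$, so it suffices to compare them on a spanning set. The most convenient one is the powers of linear forms $(b^Ty)^k$ with $b\in\R^m$, $k\in\N_0$. Over $\R$, these span each homogeneous component $\R[y]_k$, by the standard polarization identity.

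For such $q$ we have $q(Ax)=(b^TAx)^k=((A^Tb)^Tx)^k$. We then apply Proposition \ref{RestrictLFormToLine} on each side.

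First, normalize: WLOG $p(0)=1$, since both sides are unchanged when $p$ is replaced by $p/p(0)$. Indeed, the definition of $L_{p,d}$ only involves $p(-x)/p(0)$, and $(p(A^Ty))(0)=p(0)$. The polynomial $p':=p(A^Ty)$ has degree at most $\deg p$.

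Here is a wrinkle: Proposition \ref{RestrictLFormToLine} is stated for polynomials of degree at most $d$, but in the present statement $d$ is arbitrary. I would therefore avoid the companion matrix and argue directly from Definition \ref{dfriesz} along lines. For any $r\in\R[[x]]$ with $r(0)=1$ and any vector $c$, the substitution $x\mapsto tc$ is a ring homomorphism $\R[[x]]\to\R[[t]]$ that commutes with $\log$. It sends the right-hand side of the defining identity to
\[\sum_{k\ge1}\frac{t^k}{k}\sum_{|\al|=k}\binom{k}{\al}L(x^\al)c^\al=\sum_{k\ge1}\frac{t^k}{k}L((c^Tx)^k).\]
Hence $L_{r,d}((c^Tx)^k)$ equals $k$ times the $t^k$-coefficient of $-\log r(-tc)$ for $k\ge1$, and equals $d$ for $k=0$.

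Now apply this twice. For $p'$ and the direction $b$, the quantity $L_{p',d}((b^Ty)^k)$ is determined by $-\log p'(-tb)=-\log p(-tA^Tb)$. For $p$ and the direction $A^Tb$, the quantity $L_{p,d}(((A^Tb)^Tx)^k)$ is determined by the same series. The two sides therefore agree for every $k\ge1$, and for $k=0$ both equal $d$. Linearity then finishes the proof.

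The only genuinely delicate point is the step where the substitution homomorphism $x\mapsto tc$ commutes with the formal logarithm. This holds because the substitution preserves constant terms, maps the maximal ideal into $t\R[[t]]$, and is continuous for the adic topologies. The paper uses the same kind of argument with $\io$ in the proof of Proposition \ref{LFormDifferentiation}, and I would invoke that reasoning.
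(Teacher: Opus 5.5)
Your proposal is correct and is essentially the paper's argument. Both proofs rest on the identity $-\log r(-z)=-\log p(-A^Tz)$ for $r:=p(A^Ty)$, combined with the multinomial rewriting $\sum_{|\al|=k}\binom{k}{\al}v^\al L_{p,d}(x^\al)=L_{p,d}((v^Tx)^k)$. The only difference is cosmetic: the paper keeps $z$ as a formal vector and compares coefficients of $z^\beta$ after expanding $(z^TAx)^k$, whereas you restrict to lines $z=tb$ and then use that powers of linear forms span each $\R[y]_k$.
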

\begin{proof}
    Replacing $p$ by $\frac p{p(0)}$ does not change either side, by Definition \ref{dfriesz}. Assume $p(0)=1$, and write $r:=p(A^Ty)\in\R[y]$. Both sides are linear in $q$ and agree on constants, since both linear forms send $1$ to $d$. It is therefore enough to show $L_{r,d}(y^\beta)=L_{p,d}((Ax)^\beta)$ for every $\beta\in\N_0^m\setminus\{0\}$.

    Let $z\in\R^m$ be treated as a constant vector. Definition \ref{dfriesz}, applied to $r$ and to $p$, together with $-\log r(-z)=-\log p(-A^Tz)$, gives
    \[\sum_{\beta\ne0}\frac1{|\beta|}\binom{|\beta|}{\beta}L_{r,d}(y^\beta)z^\beta
    =\sum_{\alpha\ne0}\frac1{|\alpha|}\binom{|\alpha|}{\alpha}L_{p,d}(x^\alpha)(A^Tz)^\alpha.\]
    Fix $k\ge1$. The multinomial theorem yields $(v^Tx)^k=\sum_{|\alpha|=k}\binom{k}{\alpha}v^\alpha x^\alpha$ for every $v\in\R^n$, and applying $L_{p,d}$ produces
    \[L_{p,d}((v^Tx)^k)=\sum_{|\alpha|=k}\binom{k}{\alpha}v^\alpha L_{p,d}(x^\alpha).\]
    Taking $v=A^Tz$ and dividing by $k$, the degree-$k$ part of the series identity becomes
    \[\sum_{|\alpha|=k}\frac1k\binom{k}{\alpha}L_{p,d}(x^\alpha)(A^Tz)^\alpha
    =\frac1k L_{p,d}((z^TAx)^k).\]
    Expanding $(z^TAx)^k=\sum_{|\beta|=k}\binom{k}{\beta}z^\beta(Ax)^\beta$ and applying $L_{p,d}$ once more, the right-hand side equals
    \[\sum_{|\beta|=k}\frac1k\binom{k}{\beta}L_{p,d}((Ax)^\beta)z^\beta.\]
    Comparing coefficients of $z^\beta$ with the degree-$k$ part of $-\log r(-z)$ gives the claim, since $\frac1k\binom{k}{\beta}\ne0$.
\end{proof}

\begin{pro}\label{LFormOfRidgePolyTimesLinearPoly}
    Let $p\in\R[x]$ be a real zero polynomial of degree at most $d$ with $p(0)=1$, $q\in\R[t]$ a univariate polynomial, and $a,b\in\R^n$. Let $A,B\in\C^{d\times d}$ be hermitian matrices with $p(sa+tb)=\det(I_d+sA+tB)$. Such matrices exist by Corollary \ref{vcor}, after padding by zero blocks if $\deg p(sa+tb)<d$, as in the proof of Theorem \ref{relaxation} below. Then
    \begin{equation*}
        L_{p,d}((1+a^Tx)q(b^Tx))=\tr((I_d+A)q(B)).
    \end{equation*}
    In particular, if $a=0$, we have
    \begin{equation*}
        L_{p,d}(q(b^Tx))=\tr(q(B)).
    \end{equation*}
\end{pro}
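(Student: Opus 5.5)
The plan is to reduce to two variables via Proposition \ref{applyMatrixToLFrom} and then compute with Proposition \ref{hur}. Let $M\in\R^{n\times 2}$ be the matrix with columns $a$ and $b$, and write $(s,t)$ for the two variables, so that $(s,t)M^T x = s\,a^Tx+t\,b^Tx$. Put $r:=p(sa+tb)=p(M(s,t)^T)\in\R[s,t]$.

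Proposition \ref{applyMatrixToLFrom}, applied with the matrix $M^T\in\R^{2\times n}$ and the two-variable polynomial $f(s,t):=(1+s)q(t)$, gives
\[L_{r,d}(f)=L_{p,d}\bigl(f(M^Tx)\bigr)=L_{p,d}\bigl((1+a^Tx)q(b^Tx)\bigr).\]
So it suffices to show that $L_{r,d}((1+s)q(t))=\tr((I_d+A)q(B))$.

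By hypothesis $r=\det(I_d+sA+tB)$ with $A,B$ hermitian. Proposition \ref{hur} then gives
\[L_{r,d}(s^it^j)=\binom{i+j}{i}^{-1}\tr\bigl(\hur_{(i,j)}(A,B)\bigr).\]
We only need the cases $i=0$ and $i=1$.
\begin{itemize}
\item For $i=0$ the Hurwitz product is $B^j$, so $L_{r,d}(t^j)=\tr(B^j)$.
\item For $i=1$ the Hurwitz product is $\sum_{k=0}^{j}B^kAB^{j-k}$, a sum of $j+1$ words. By cyclicity of the trace each word has trace $\tr(AB^j)$. Since $\binom{j+1}{1}=j+1$, this gives $L_{r,d}(st^j)=\tr(AB^j)$.
\end{itemize}
By linearity in $q$ it follows that $L_{r,d}((1+s)q(t))=\tr(q(B))+\tr(Aq(B))=\tr((I_d+A)q(B))$. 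The case $a=0$ is the special case $A=0$, or alternatively follows from $L_{p,d}(1\cdot q(b^Tx))$ directly.

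The main point needing care is that Proposition \ref{hur} is stated for $L_{r,d}$ with $d$ the matrix size, and we need $d$ to match the virtual degree used for $p$. Padding the representation by zero blocks to size $d$ handles this: it leaves the determinant unchanged and gives $L_{r,d}(1)=d$, consistent with the $d$ in $L_{p,d}$. We must also check that Proposition \ref{applyMatrixToLFrom} uses the same $d$ on both sides, which it does.

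A last detail is that $r(0)=p(0)=1\neq0$, so $L_{r,d}$ is defined. Beyond this, the only computation is the cyclic-trace collapse of the $(1,j)$-Hurwitz product, which is routine.
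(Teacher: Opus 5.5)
Your proposal is correct and follows the paper's proof: you pull back along $(s,t)\mapsto sa+tb$ with Proposition \ref{applyMatrixToLFrom}, evaluate $L_{p(sa+tb),d}$ on $t^m$ and $st^m$ via Proposition \ref{hur}, and use cyclicity of the trace to get $\tr(\hur_{(1,m)}(A,B))=(m+1)\tr(AB^m)$. For the case $a=0$, your second justification (the $i=0$ computation alone) is the clean one, since reading it as the ``special case $A=0$'' needs the extra remark that a hermitian $A$ with $\det(I_d+sA)=1$ must vanish.
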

\begin{proof}
    By linearity of $L_{p,d}$, we can restrict ourselves to the case $q=t^m$ for some $m\in\N_0$. Using Propositions \ref{hur} and \ref{applyMatrixToLFrom}, we get
    \begin{align*}
        L_{p,d}((1+a^Tx)q(b^Tx))&\stackrel{\ref{applyMatrixToLFrom}}{=}L_{p(sa+tb),d}((1+s)q(t))\\
        &=L_{p(sa+tb),d}(t^m)+L_{p(sa+tb),d}(st^m)\\
        &\stackrel{\ref{hur}}{=}\tr(B^m)+\frac1{\binom{m+1}{(1,m)}}\tr(\hur_{(1,m)}(A,B))\\
        &\stackrel{(\ast)}{=}\tr(B^m)+\tr(AB^m)\\
        &=\tr((I_d+A)B^m),
    \end{align*}
    here, in $(\ast)$, the Hurwitz product runs over the $\binom{m+1}{(1,m)}$ words with one factor $A$ and $m$ factors $B$, and each such word has trace $\tr(AB^m)$ by cyclic invariance of the trace.
\end{proof}

\subsection{Relaxing rigidly convex sets}

\begin{df}\label{dfpencil}
Let $p\in\R[[x]]$ be a power series with $p(0)\ne0$ and $d\in\N_0$. Consider the symmetric matrices
\[
A_0:=
\begin{pmatrix}
L_{p,d}(1)&L_{p,d}(x_1)&\ldots&L_{p,d}(x_n)\\
L_{p,d}(x_1)&L_{p,d}(x_1^2)&\ldots&L_{p,d}(x_1x_n)\\
\vdots&\vdots&&\vdots\\
L_{p,d}(x_n)&L_{p,d}(x_1x_n)&\ldots&L_{p,d}(x_n^2)
\end{pmatrix}\in\R^{(n+1)\times(n+1)}\]
and
\[
A_i:=
\begin{pmatrix}
L_{p,d}(x_i)&L_{p,d}(x_ix_1)&\ldots&L_{p,d}(x_ix_n)\\
L_{p,d}(x_ix_1)&L_{p,d}(x_ix_1^2)&\ldots&L_{p,d}(x_ix_1x_n)\\
\vdots&\vdots&&\vdots\\
L_{p,d}(x_ix_n)&L_{p,d}(x_ix_1x_n)&\ldots&L_{p,d}(x_ix_n^2)
\end{pmatrix}\in\R^{(n+1)\times(n+1)}
\]
for $i\in\{1,\ldots,n\}$.
\begin{enumerate}[(a)]
\item We call the linear matrix polynomial
\[M_{p,d}:=A_0+x_1A_1+\ldots+x_nA_n\in\R[x]^{(n+1)\times(n+1)}\]
the \emph{pencil associated to $p$ with respect to the virtual degree $d$} and
\[S_d(p):=\{a\in\R^n\mid M_{p,d}(a)\succeq0\}\]
the \emph{spectrahedron associated to $p$ with respect to the virtual degree $d$}.
\item In the case where $p$ is a polynomial, we call
\[M_p:=M_{p,\deg p}\] the \emph{pencil associated to $p$} and \[S(p):=S_{\deg p}(p)\]
the \emph{spectrahedron associated to $p$}.
\item
We call the linear matrix polynomial
\[M_{p,\infty}\in\R[x]^{n\times n}\] that arises from $M_{p,d}$ (for no matter what $d\in\N_0$)
by deleting the first row and column the \emph{pencil associated to $p$ with respect to infinite virtual degree}
and \[S_\infty(p):=\{a\in\R^n\mid M_{p,\infty}(a)\succeq0\}\]
the \emph{spectrahedron associated to $p$ with respect to infinite virtual degree}.
\end{enumerate}
\end{df}

\begin{rem}\label{dependsonlyoncubicpart}
Let $p\in\R[x]$ be a polynomial with $p(0)\ne0$.
Note that $M_p$ and therefore $S(p)$ depend only on the cubic part $\trunc_3p$ of $p$ and on the degree $\deg p$, the latter
entering only through the top left entry $L_p(1)=\deg p$ of $M_p$. If $p(0)=1$, then all other entries of $M_p$ are polynomials of degree at most $3$
in the coefficients of $\trunc_3p$, which could be written down explicitly by the expressions of Proposition \ref{LFormDifferentiation} for the values
of $L_p$ on the monomials of degree at most $3$.
\end{rem}

\begin{lem}\label{restriction}
Suppose $m,n\in\N_0$ with $m\le n$ and $q\in\R[x]$ with $q(0)\ne0$. Set
\[r:=q(x_1,\ldots,x_m,0,\ldots,0)\in\R[x_1,\ldots,x_m].\]
Then
\begin{enumerate}[(a)]
\item $L_{q,d}(p)=L_{r,d}(p)$ for all $p\in\R[x_1,\ldots,x_m]$ and $d\in\N_0$,
\item $\{a\in\R^m\mid(a,0,\ldots,0)\in C(q)\}=C(r)$ if $q$ is real zero,
\item $\{a\in\R^m\mid(a,0,\ldots,0)\in S_d(q)\}\subseteq S_d(r)$ for all $d\in\N_0\cup\{\infty\}$.
\end{enumerate}
\end{lem}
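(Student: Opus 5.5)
For (a), I will apply Proposition \ref{applyMatrixToLFrom} with the $m\times n$ matrix $A=(I_m\;0)$, the variables $y=(y_1,\ldots,y_m)$ being identified with $x_1,\ldots,x_m$. Then $A^Ty=(y_1,\ldots,y_m,0,\ldots,0)$, so $q(A^Ty)=r$. Moreover, for $p\in\R[x_1,\ldots,x_m]$ we have $p(Ax)=p(x_1,\ldots,x_m)$, viewed in $\R[x_1,\ldots,x_n]$. The proposition then gives $L_{r,d}(p)=L_{q,d}(p)$ directly. Note that $r(0)=q(0)\ne0$, so $L_{r,d}$ is defined. Alternatively, one could argue straight from Definition \ref{dfriesz}: setting $x_{m+1}=\ldots=x_n=0$ in $-\log\frac{q(-x)}{q(0)}$ is a ring homomorphism of power series that commutes with $\log$ and yields $-\log\frac{r(-x)}{r(0)}$. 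Comparing the coefficients of the monomials $x^\al$ with $\al$ supported in $\{1,\ldots,m\}$ then gives the same conclusion.

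For (b), I first check that $r$ is real zero. For $a\in\R^m$ we have $r(ta)=q(t(a,0,\ldots,0))$, which splits by Proposition \ref{splitrz}, so $r$ is real zero. The set identity then follows immediately from Definition \ref{dfrcs}: for $\la\in(0,1)$ we have $r(\la a)=q(\la(a,0,\ldots,0))$, so the defining condition for $a\in C(r)$ is literally the condition for $(a,0,\ldots,0)\in C(q)$.

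For (c), the idea is that $M_{r,d}(a)$ is a principal submatrix of $M_{q,d}(a,0,\ldots,0)$. Take $a\in\R^m$ and put $\tilde a:=(a,0,\ldots,0)\in\R^n$. Then
\[M_{q,d}(\tilde a)=A_0+a_1A_1+\ldots+a_mA_m,\]
whose entries are $L_{q,d}(u)+\sum_{i\le m}a_iL_{q,d}(x_iu)$ for $u$ ranging over products of two elements of $\{1,x_1,\ldots,x_n\}$. Now restrict to the principal submatrix indexed by $1,x_1,\ldots,x_m$, i.e.\ the first $m+1$ rows and columns. Every monomial appearing there lies in $\R[x_1,\ldots,x_m]$, so by (a) each entry equals the corresponding entry of $M_{r,d}(a)$. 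A principal submatrix of a positive semidefinite matrix is positive semidefinite, so $\tilde a\in S_d(q)$ implies $a\in S_d(r)$. For $d=\infty$ the same argument applies after deleting the first row and column, using the submatrix indexed by $x_1,\ldots,x_m$.

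None of these steps is a real obstacle. The only point needing care is the bookkeeping in (a): one must identify the variables correctly and observe that the top-left entry $L(1)=d$ agrees on both sides, which the definition guarantees.
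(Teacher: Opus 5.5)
Your proof is correct and essentially the paper's. Part (b) is the same restriction-to-lines argument. Part (c) is the same principal-submatrix argument; for finite $d$ the paper phrases it through the quadratic form $v^TM_{q,d}(\tilde a)v$ with $v_{m+1}=\cdots=v_n=0$, which is equivalent. Your alternative argument for (a) is exactly the paper's substitution argument. Your primary route for (a), applying Proposition \ref{applyMatrixToLFrom} with $A=(I_m\;0)$, is also valid: that proposition is proved independently beforehand, and it just packages the same coefficient comparison.
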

\begin{proof}
(a) Both sides are linear in $p$ and send $1$ to $d$. For the remaining monomials, Definition \ref{dfriesz} writes the coefficients of $-\log\frac{q(-x)}{q(0)}$ as $\frac1{|\alpha|}\binom{|\alpha|}{\alpha}L_{q,d}(x^\alpha)$. Substituting $x_{m+1}=\cdots=x_n=0$ produces $-\log\frac{r(-x)}{r(0)}$, since $r(0)=q(0)$. Comparing coefficients of the monomials in $x_1,\ldots,x_m$ gives the claim.

(b) Suppose that $q$ is real zero. Every univariate restriction of $r$ to a line through the origin is a univariate restriction of $q$, so $r$ is real zero. For $a\in\R^m$ one has $q(t(a,0,\ldots,0))=r(ta)$, hence $(a,0,\ldots,0)\in C(q)$ if and only if $a\in C(r)$.

(c) Let $d\in\N_0$, $a\in\R^m$ and $\tilde a:=(a,0,\ldots,0)\in\R^n$. Write $x_0:=1$. For $v=(v_0,\ldots,v_n)^T\in\R^{n+1}$ with $v_{m+1}=\cdots=v_n=0$,
\begin{align*}
v^TM_{q,d}(\tilde a)v
&=\sum_{i,j=0}^n v_iv_j\Bigl(L_{q,d}(x_ix_j)+\sum_{k=1}^n\tilde a_k L_{q,d}(x_kx_ix_j)\Bigr)\\
&=L_{q,d}\bigl((v_0+v_1x_1+\cdots+v_mx_m)^2(1+a_1x_1+\cdots+a_mx_m)\bigr).
\end{align*}
The polynomial on the right lies in $\R[x_1,\ldots,x_m]$. By (a) its value under $L_{q,d}$ equals its value under $L_{r,d}$, and that value is $v'^TM_{r,d}(a)v'$ for $v':=(v_0,\ldots,v_m)^T$. Thus $M_{q,d}(\tilde a)\succeq0$ implies $M_{r,d}(a)\succeq0$.

For the infinite pencil, delete the first row and column. The principal submatrix of $M_{q,\infty}(\tilde a)$ on the coordinates $x_1,\ldots,x_m$ equals $M_{r,\infty}(a)$, because those entries have positive degree and agree by (a). A principal submatrix of a positive semidefinite matrix is positive semidefinite.
\end{proof}

\begin{thm}\label{relaxation}
Let $p$ be a real zero polynomial and let $e$ be an integer with $e\ge\deg p$. Then $C(p)\subseteq S_e(p)$. In particular, $C(p)\subseteq S(p)$.
\end{thm}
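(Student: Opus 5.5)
Fix $a\in C(p)$. We must show $M_{p,e}(a)\succeq0$, that is, $v^TM_{p,e}(a)v\ge0$ for every $v=(v_0,\ldots,v_n)\in\R^{n+1}$. As in the proof of Lemma \ref{restriction}(c), this quadratic form is
\[v^TM_{p,e}(a)v=L_{p,e}\bigl((1+a^Tx)(v_0+b^Tx)^2\bigr),\qquad b:=(v_1,\ldots,v_n).\]
We may assume $p(0)=1$, since rescaling changes neither $L_{p,e}$ nor $C(p)$. Then $p(sa+tb)$ is a bivariate real zero polynomial of degree at most $\deg p\le e$. By Corollary \ref{vcor} there are hermitian $A,B$ of size $\deg p(sa+tb)$ with $p(sa+tb)=\det(I+sA+tB)$, and we pad them with zero blocks to size $e$. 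Padding multiplies by $\det(I)=1$ and does not change the polynomial.

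Next apply Proposition \ref{LFormOfRidgePolyTimesLinearPoly} with $d=e$ and $q(t)=(v_0+t)^2$. This gives
\[v^TM_{p,e}(a)v=\tr\bigl((I_e+A)(v_0I_e+B)^2\bigr).\]
The padding is compatible with that proposition, because $L_{p,e}$ differs from $L_{p,\deg p}$ only in the value at $1$, and the trace of the extra zero block contributes exactly $e-\deg p$ to $\tr(I)$.

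It remains to see that $I_e+A\succeq0$. The univariate polynomial $p(sa)=\det(I_e+sA)=\prod_i(1+s\lambda_i)$, where the $\lambda_i$ are the eigenvalues of $A$. Since $a\in C(p)$, this polynomial has no zero in $(0,1)$, so $1+\lambda_i\ge0$ for each $i$. Indeed, if $\lambda_i<-1$ then $s=-1/\lambda_i\in(0,1)$ would be a zero. This is just Proposition \ref{rcsdet} applied to the one-variable restriction. Writing $N:=v_0I_e+B$, which is hermitian, we get
\[\tr\bigl((I+A)N^2\bigr)=\tr\bigl(N(I+A)N\bigr)\ge0,\]
because $N(I+A)N\succeq0$. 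Hence $M_{p,e}(a)\succeq0$, so $a\in S_e(p)$. The case $e=\deg p$ gives $C(p)\subseteq S(p)$.

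The main obstacle is making the use of Proposition \ref{LFormOfRidgePolyTimesLinearPoly} airtight in the degenerate situations. If $\deg p(sa+tb)<e$, the padding bookkeeping has to be checked as above. If $a$ or $b$ is zero, the substitution map in Proposition \ref{applyMatrixToLFrom} has rank less than $2$, but that proposition allows arbitrary $A$, so this case goes through unchanged. Everything else is a direct computation.
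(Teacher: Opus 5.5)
Your proof is correct and matches the paper's proof step for step. You express $v^TM_{p,e}(a)v$ as $L_{p,e}\bigl((1+a^Tx)(v_0+b^Tx)^2\bigr)$, take a zero-padded hermitian representation of $p(sa+tb)$ from Corollary \ref{vcor}, apply Proposition \ref{LFormOfRidgePolyTimesLinearPoly} with virtual degree $e$, and conclude from $I_e+A\succeq0$, which follows from $a\in C(p)$.

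Your padding justification is unnecessary and slightly off. That proposition is already stated for any virtual degree $d\ge\deg p$ with $d\times d$ matrices, so it applies directly. Moreover, the padded block has size $e-\deg p(sa+tb)$, not $e-\deg p$.
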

\begin{proof}
    Replacing $p$ by $\frac p{p(0)}$ changes neither $C(p)$ nor $L_{p,e}$. Assume $p(0)=1$ and set $d:=\deg p$. If $d=0$, then $p=1$, so $L_{p,e}$ vanishes on every monomial of positive degree and $L_{p,e}(1)=e$. The pencil $M_{p,e}$ is therefore $e$ times the matrix with a one in the upper left corner and zeros elsewhere, which is positive semidefinite, and $C(p)=\R^n$. Assume henceforth $d\ge1$, and let $a\in C(p)$. For $v=(v_0,\ldots,v_n)^T\in\R^{n+1}$, with the convention $x_0:=1$,
    \[v^TM_{p,e}(a)v=L_{p,e}\bigl((1+a^Tx)(v_0+v_1x_1+\cdots+v_nx_n)^2\bigr).\]
    Set $b:=(v_1,\ldots,v_n)^T$ and $q:=(v_0+t)^2\in\R[t]$. The polynomial $r:=p(sa+tb)\in\R[s,t]$ is real zero and satisfies $r(0)=1$. Let $d':=\deg r$. If $d'=0$, take hermitian matrices $A=B=0\in\C^{e\times e}$. If $d'\ge1$, Corollary \ref{vcor} provides hermitian $A',B'\in\C^{d'\times d'}$ with $r=\det(I_{d'}+sA'+tB')$, and padding by zero blocks produces hermitian $A,B\in\C^{e\times e}$ with $r=\det(I_e+sA+tB)$. Proposition \ref{LFormOfRidgePolyTimesLinearPoly}, applied with virtual degree $e$, yields
    \[L_{p,e}((1+a^Tx)q(b^Tx))=\tr((I_e+A)(v_0I_e+B)^2).\]
    Every eigenvalue of $A$ is at least $-1$: an eigenvalue $\mu<-1$ would give $\det(I_e+\lambda A)=p(\lambda a)=0$ for $\lambda=-\frac1\mu\in(0,1)$, contradicting $a\in C(p)$. Thus $I_e+A\succeq0$. The hermitian square $(v_0I_e+B)^2$ is positive semidefinite as well, so
    \[\tr((I_e+A)(v_0I_e+B)^2)=\tr\bigl((I_e+A)^{1/2}(v_0I_e+B)^2(I_e+A)^{1/2}\bigr)\ge0.\]
    Therefore $M_{p,e}(a)\succeq0$, that is, $a\in S_e(p)$.
\end{proof}

\begin{rem}
    Example \ref{ex:LFormNotMeasure} shows that if one allows for higher degree moments in the matrix polynomial defining $S(p)$, one does not always get a relaxation.
\end{rem}

\subsection{Relaxing linear programs}

We now come back to the most basic example of real zero polynomials, namely products of linear polynomials non-vanishing at the origin.
The rigidly convex
sets they define are exactly the polyhedra containing the origin in their interior. The complexity behavior of optimization of linear functions over polyhedra is
mainly governed by the number of linear inequalities they are defined by \cite{mg}.
For polyhedra with a huge number of facets, it is therefore reasonable to try to
find reasonable outer approximations defined by a small linear matrix inequality. This fits of course into the above more general framework.
However, in this special case, we get new interpretations, simplifications and extensions of the construction presented above:

First, we will be able to
interpret the matrix coefficients $A_i$ of the pencil from Definition \ref{dfpencil} as moment matrices and localization matrices \cite{lau}. This might remind
the reader of Lasserre's moment relaxations. But Lasserre's relaxation is a lift-and-project method where
``moment matrices'' are actually matrices filled with unknowns having the structure of moment matrices. Our method is not a lift-and-project method
and the matrices are actual moment matrices filled with real numbers.

Second, the proofs for the case of linear programming will simplify dramatically. In particular, we will not need any version of the Helton--Vinnikov Theorem
\ref{vinnikov}.

Third, we will not restrict to moments of degree three in this case but will go to arbitrarily high moments and thus present a hierarchy of relaxations
for which we can prove finite convergence at level $d-1$ if $d$ is the number of linear inequalities.

\begin{pro}\label{dirac}
Let $d\in\N_0$ and $a_1,\dots,a_d\in\R^n$ and $p:=\prod_{i=1}^d(1+a_i^Tx)$. Then
\[L_{p,d}(q)=\sum_{i=1}^dq(a_i)\]
for all $q\in\R[x]$, i.e., $L_{p,d}$ is integration with respect to the sum of the Dirac measures in the points $a_i$.
\end{pro}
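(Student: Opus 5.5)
The plan is to reduce to a single linear factor via additivity, then compute directly from Definition \ref{dfriesz}. By Proposition \ref{logprop}(a), applied repeatedly with virtual degrees $1,\ldots,1$ summing to $d$, we have
\[L_{p,d}=\sum_{i=1}^dL_{1+a_i^Tx,1}.\]
The empty product case $d=0$ is covered trivially, since then $p=1$ and $L_{p,0}$ vanishes on all monomials, including $L_{p,0}(1)=0$. The right-hand side of the claim is also linear in $q$ and additive over the points. It therefore suffices to show
\[L_{1+a^Tx,1}(x^\al)=a^\al\qquad\text{for all }a\in\R^n,\ \al\in\N_0^n.\]
Note that we use $L_{\cdot,1}$ rather than $L_{\cdot}$, since $a_i$ may be zero and then $\deg(1+a_i^Tx)=0$. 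This is exactly why part (a) of Proposition \ref{logprop}, with virtual degrees, is needed instead of part (b).

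For $\al=0$ the identity holds by definition: $L_{1+a^Tx,1}(1)=1=a^0$. For $\al\ne0$ I would expand the logarithm using the univariate series from Definition \ref{dflogexp}:
\[-\log(1-a^Tx)=\sum_{k\ge1}\frac{(a^Tx)^k}{k}.\]
This step needs some justification. Either substitute the univariate formal identity into $u=a^Tx$, which is a compatible substitution since $u$ has no constant term, or simply observe that Definition \ref{dflogexp}(b) with $p-1=-a^Tx$ directly gives $\sum_k\frac{(-1)^{k+1}}{k}(-a^Tx)^k$ up to sign. Then expand with the multinomial theorem:
\[(a^Tx)^k=\sum_{|\al|=k}\binom{k}{\al}a^\al x^\al.\]
Comparing coefficients with the defining identity $\sum_{\al\ne0}\frac1{|\al|}\binom{|\al|}{\al}L(x^\al)x^\al$ yields $L_{1+a^Tx,1}(x^\al)=a^\al$.

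There is no real obstacle here. The only points needing care are the sign bookkeeping in $-\log\frac{p(-x)}{p(0)}$, where $p(-x)=1-a^Tx$, and using the virtual degree $1$ for each factor so that the count $L(1)=d$ comes out right even when some $a_i=0$. Once the monomial identity holds, linearity extends it from monomials to all $q\in\R[x]$, which gives the integration interpretation against $\sum_i\delta_{a_i}$.
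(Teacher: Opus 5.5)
Your proposal is correct and follows essentially the same route as the paper. Both split $p$ into the factors $1+a_i^Tx$ with virtual degree $1$ via Proposition \ref{logprop}(a), then expand $-\log(1-a^Tx)=\sum_{k\ge1}\frac{(a^Tx)^k}{k}$ with the multinomial theorem and compare coefficients to get $L_{1+a^Tx,1}(x^\al)=a^\al$. Your remark that virtual degree $1$ per factor (rather than $\deg$) keeps $L_{p,d}(1)=d$ correct when some $a_i=0$ is exactly why the statement is phrased with $L_{p,d}$.
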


\begin{proof}
In the case $d=0$, we have $p=1$ and thus $L_{p,d}=0$. The case $d\ge2$ reduces to the case $d=1$ by means of Proposition
\ref{logprop}(a). Now let $d=1$ and write $a:=a_1$, so that $p=1+a^Tx$. By linearity, it suffices to treat the case where $q$ is a monomial. For the constant monomial $q=1$, we have $L_{p,1}(q)=1=q(a)$. For every other monomial, Definition \ref{dfriesz} and the expansion $-\log p(-x)=\sum_{k\ge1}\frac{(a^Tx)^k}k$ give
\[\sum_{k=1}^\infty\frac1k\sum_{|\alpha|=k}\binom{k}{\alpha}L_{p,1}(x^\alpha)x^\alpha=\sum_{k=1}^\infty\frac{(a^Tx)^k}k.\]
The multinomial theorem expands the right-hand side as $\sum_{k=1}^\infty\frac1k\sum_{|\alpha|=k}\binom{k}{\alpha}a^\alpha x^\alpha$. Comparing coefficients yields $L_{p,1}(x^\alpha)=a^\alpha$, and therefore $L_{p,1}(q)=q(a)$.
\end{proof}

The following generalizes Definition \ref{dfpencil}(b).

\begin{df}\label{dfpencilhierarchy}
Let $p\in\R[x]$ with $p(0)\ne0$ and $d\in\N_0$. Set $s:=\binom{d+n}d=\binom{d+n}n=\#E$ where \[E=\{x^\al\mid \al\in\N_0^n,|\al|\le d\}\]
is the set of monomials in $n$ variables of degree at most $d$.
Fix an order on these monomials, i.e., write $E=\{m_1,\ldots,m_s\}$.
Consider the symmetric matrices
\[A_0:=(L_p(m_im_j))_{\substack{1\le i\le s\\1\le j\le s}}\qquad\text{and}\qquad A_k:=(L_p(x_km_im_j))_{\substack{1\le i\le s\\1\le j\le s}}\]
for $k\in\{1,\ldots,n\}$. Then we call the linear matrix polynomial
\[M_p^{(d)}:=A_0+x_1A_1+\ldots+x_nA_n\in\R[x]^{s\times s}\]
the \emph{$d$-th pencil associated to $p$} and
\[S^{(d)}(p):=\{a\in\R^n\mid M_p^{(d)}(a)\succeq0\}\]
the \emph{$d$-th spectrahedron associated to $p$}.
\end{df}

\begin{lem}\label{pencilevalhierarchy}
Let $p\in\R[x]$ be a polynomial with $p(0)\ne0$, $a\in\R^n$ and $d\in\N_0$. Let $m_1,\ldots,m_s$ be the pairwise distinct monomials
in $n$ variables of degree at most $d$ in the order that has been fixed in Definition \ref{dfpencilhierarchy}. Let $v\in\R^s$.
Then \[v^TM_p^{(d)}(a)v=L_p((v_1m_1+\ldots+v_sm_s)^2(1+a_1x_1+\ldots+a_nx_n)).\]
\end{lem}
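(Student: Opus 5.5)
The plan is to expand the quadratic form entrywise and then recollect everything under $L_p$ by linearity, exactly as in the proof of Lemma \ref{restriction}(c) and of Theorem \ref{relaxation}. Write $x_0:=1$ and $a_0:=1$, so that $M_p^{(d)}(a)=\sum_{k=0}^n a_kA_k$, where $A_0$ is the matrix from Definition \ref{dfpencilhierarchy} and, with the convention $x_0=1$, $A_k=(L_p(x_km_im_j))_{i,j}$ for all $k\in\{0,\ldots,n\}$.

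First compute the $(i,j)$ entry of $M_p^{(d)}(a)$. It is
\[\sum_{k=0}^n a_kL_p(x_km_im_j)=L_p\Bigl(m_im_j\sum_{k=0}^na_kx_k\Bigr)=L_p\bigl(m_im_j(1+a_1x_1+\ldots+a_nx_n)\bigr),\]
using only the linearity of $L_p$ and the fact that the $a_k$ are real constants.

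Next form the quadratic form. This gives
\[v^TM_p^{(d)}(a)v=\sum_{i,j=1}^sv_iv_jL_p\bigl(m_im_j(1+a^Tx)\bigr)=L_p\Bigl(\Bigl(\sum_{i,j}v_iv_jm_im_j\Bigr)(1+a^Tx)\Bigr).\]
Linearity of $L_p$ is used once more here. Finally, observe that $\sum_{i,j}v_iv_jm_im_j=(v_1m_1+\ldots+v_sm_s)^2$, which yields the claim.

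There is no real obstacle in this argument. The only point needing care is bookkeeping: the matrices are indexed by the fixed monomial order, and the constant term $A_0$ corresponds to the factor $1$ in $1+a^Tx$. No real zero hypothesis is needed, only $p(0)\ne0$, so that $L_p$ is defined.
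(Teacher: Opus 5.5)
Your proof is correct and is the intended argument. The paper states this lemma without proof, and the same linearity computation (expand $v^TM(a)v$ entrywise and pull everything inside $L_p$) is carried out explicitly in the proofs of Lemma \ref{restriction}(c) and Theorem \ref{relaxation}.
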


\begin{lem}\label{interpol}
Let $d\in\N$, $a_1,\ldots,a_d\in\R^n$ and $i\in\{1,\ldots,d\}$. Then there exists a polynomial $q\in\R[x]$ with $\deg q<d$ such that $q(a_i)\ne0$ and $q(a_j)=0$ for all $j\in\{1,\ldots,d\}$ with $a_j\ne a_i$.
\end{lem}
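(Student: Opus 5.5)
The natural approach is to build $q$ as a product of affine linear polynomials, one for each point $a_j$ that differs from $a_i$. Let $J:=\{j\in\{1,\ldots,d\}\mid a_j\ne a_i\}$. Since $i\notin J$, we have $\#J\le d-1$. For each $j\in J$, the vector $a_j-a_i$ is nonzero, so we can choose a linear form $\ell_j(x):=(a_j-a_i)^T(x-a_j)$. This is an affine linear polynomial of degree $1$ with $\ell_j(a_j)=0$ and $\ell_j(a_i)=-\|a_j-a_i\|^2\ne0$.

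Then set
\[q:=\prod_{j\in J}\ell_j\in\R[x],\]
with the convention $q=1$ if $J=\emptyset$. Its degree is $\#J\le d-1<d$. For every $j\in J$ the factor $\ell_j$ vanishes at $a_j$, so $q(a_j)=0$. On the other hand,
\[q(a_i)=\prod_{j\in J}\bigl(-\|a_j-a_i\|^2\bigr)\ne0,\]
since each factor is nonzero. This is exactly what the lemma asks for.

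There is no real obstacle here. The only points needing care are using the squared Euclidean norm to guarantee nonvanishing at $a_i$, and the degree count $\#J\le d-1$, which relies on excluding $a_i$ itself and all repetitions of it from the product.
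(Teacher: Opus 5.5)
Your proof is correct and follows essentially the same idea as the paper: $q$ is a product over $J$ of $\#J\le d-1$ affine linear factors, each vanishing at $a_j$ but not at $a_i$. The only difference is that the paper uses one common direction $v$ with $v^Ta_j\ne v^Ta_i$ for all $j\in J$, obtained because $\prod_{j\in J}x^T(a_j-a_i)$ is a nonzero polynomial, whereas your per-factor direction $a_j-a_i$ avoids that genericity step.
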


\begin{proof}
Set $J:=\{j\in\{1,\ldots,d\}\mid a_j\ne a_i\}$. The polynomial $\prod_{j\in J}(x^T(a_j-a_i))\in\R[x]$ is not the zero polynomial and hence cannot vanish on the whole of $\R^n$.
So we can choose $v\in\R^n$ with $v^Ta_j\ne v^Ta_i$ for all $j\in J$. Now set
$q:=\prod_{j\in J}(v^Ta_j-v^Tx)\in\R[x]$, a polynomial of degree $\#J\le d-1$.
\end{proof}

\begin{thm}\label{relaxationhierarchy}
Let $p\in\R[x]$ be a product of linear polynomials with $p(0)\ne0$. Then the following hold:
\begin{enumerate}[(a)]
\item $C(p)\subseteq S^{(d)}(p)$ for each $d\in\N_0$\qquad(``relaxation'')
\item $S^{(0)}(p)\supseteq S^{(1)}(p)\supseteq S^{(2)}(p)\supseteq S^{(3)}(p)\supseteq\ldots$\qquad(``hierarchy'')
\item If $d:=\deg p\ge1$, then $C(p)=S^{(d-1)}(p)$\qquad(``finite convergence'')
\end{enumerate}
\end{thm}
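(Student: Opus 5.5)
Write $p=c\prod_{i=1}^d(1+a_i^Tx)$ with $c=p(0)\ne0$ and $a_1,\ldots,a_d\in\R^n$; this is possible since every linear factor of $p$ is nonzero at the origin and can be normalized. By Definition \ref{dfriesz} the constant $c$ does not affect $L_p$, so Proposition \ref{dirac} gives $L_p(q)=\sum_{i=1}^dq(a_i)$ for all $q\in\R[x]$, with $d=\deg p$. Combining this with Lemma \ref{pencilevalhierarchy} yields, for $a\in\R^n$ and $v\in\R^s$,
\[v^TM_p^{(k)}(a)v=\sum_{i=1}^d(1+a^Ta_i)\,f_v(a_i)^2,\qquad f_v:=v_1m_1+\ldots+v_sm_s.\]
Note that $f_v$ ranges over all polynomials of degree at most $k$. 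We also need the description of $C(p)$: since $p(ta)=c\prod_i(1+ta^Ta_i)$, a point $a$ lies in $C(p)$ if and only if $1+ta^Ta_i\ne0$ for all $t\in(0,1)$ and all $i$. This is equivalent to $1+a^Ta_i\ge0$ for all $i$: if $a^Ta_i\ge0$ there is nothing to check, and otherwise the root $t=-1/(a^Ta_i)$ must satisfy $t\ge1$.

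\emph{Part (a).} If $a\in C(p)$, then every weight $1+a^Ta_i$ is nonnegative. Hence the sum above is nonnegative for every $v$, so $M_p^{(k)}(a)\succeq0$.

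\emph{Part (b).} Polynomials of degree at most $k$ are among those of degree at most $k+1$. So if the quadratic form is nonnegative for all $f$ of degree at most $k+1$, it is nonnegative for all $f$ of degree at most $k$. Equivalently, $M_p^{(k)}(a)$ is a principal submatrix of $M_p^{(k+1)}(a)$ up to the choice of ordering, and principal submatrices of positive semidefinite matrices are positive semidefinite.

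\emph{Part (c).} The inclusion $C(p)\subseteq S^{(d-1)}(p)$ follows from (a). For the converse, let $a\in S^{(d-1)}(p)$ and fix $i$. Lemma \ref{interpol} provides $q$ of degree at most $d-1$ with $q(a_i)\ne0$ and $q(a_j)=0$ whenever $a_j\ne a_i$. Taking $f_v=q$ in the identity above, the sum reduces to
\[\#\{j\mid a_j=a_i\}\,(1+a^Ta_i)\,q(a_i)^2\ge0,\]
and therefore $1+a^Ta_i\ge0$. Since $i$ was arbitrary, $a\in C(p)$.

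The argument is routine. The only points requiring care are the reduction to $p(0)=1$ and the characterization of $C(p)$ via $1+a^Ta_i\ge0$, including the case $a^Ta_i\ge0$ and the boundary case in which the root equals $1$.
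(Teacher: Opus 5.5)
Your proof is correct and follows the paper's argument essentially verbatim. You use Proposition \ref{dirac} to make $L_p$ the sum of Dirac measures at the $a_i$, evaluate the quadratic form via Lemma \ref{pencilevalhierarchy}, and isolate a single weight $1+a^Ta_i$ with Lemma \ref{interpol} for (c); the only cosmetic differences are that you derive the half-space description of $C(p)$ by hand instead of citing Proposition \ref{rcsdet}, and you argue (c) directly rather than by contraposition.
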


\begin{proof}
WLOG $p(0)=1$. Write $p=\prod_{i=1}^d(1+a_i^Tx)$ with $a_1,\ldots,a_d\in\R^n$ and $d=\deg p$.
For example by Proposition \ref{rcsdet}
(interpreting the product representation of $p$ as a diagonal determinantal representation), we have
\[C(p)=\{a\in\R^n\mid1+a_1^Ta\ge0,\ldots,1+a_d^Ta\ge0\}.\]

(a) Let $a\in C(p)$. We have to show $M_p^{(e)}(a)\succeq0$ for all $e\in\N_0$. By Lemma \ref{pencilevalhierarchy}, this is equivalent to
$L_p(q^2(1+a^Tx))\ge0$ for all $q\in\R[x]$. This means by Proposition \ref{dirac} that
\[\sum_{i=1}^dq(a_i)^2(1+a^Ta_i)\ge0\]
for all $q\in\R[x]$. But even more is true: For each $i\in\{1,\dots,d\}$,
$1+a^Ta_i=1+a_i^Ta\ge0$
since $a\in C(p)$ and therefore $q(a_i)^2(1+a^Ta_i)\ge0$.

(b) is clear from Definition \ref{dfpencilhierarchy}.

(c) One inclusion has been proven already in (a). For the other one, let $a\in\R^n\setminus C(p)$. We show that $a\notin S^{(d-1)}(p)$.
By Lemma \ref{pencilevalhierarchy} and Proposition \ref{dirac}, this means we have to show that there exists a polynomial $q\in\R[x]\setminus\{0\}$ with
$\deg q\le d-1$ such that \[\sum_{j=1}^dq(a_j)^2(1+a_j^Ta)<0.\]
Choose $i\in\{1,\ldots,d\}$ such that $1+a_i^Ta<0$. By Lemma \ref{interpol}, we can choose a polynomial $q\in\R[x]$ with $\deg q<d$ such
that $q(a_i)\ne0$ and $q(a_j)=0$ for all $j\in\{1,\ldots,d\}$ with $a_j\neq a_i$. Then
\[\sum_{j=1}^dq(a_j)^2(1+a_j^Ta)=kq(a_i)^2(1+a_i^Ta)<0,\]
where $k=\#\{j\mid a_i=a_j\}$.
\end{proof}


\section{Tightening the relaxation}\label{sTighenRelax}

Let $p\in\R[x]$ be a real zero polynomial. By Theorem \ref{relaxation}, $S(p)$ is an outer spectrahedral
approximation of $C(p)$, i.e., $C(p)\subseteq S(p)$. If $p$ has high degree, then we cannot expect
in general that this is a good approximation since $S(p)$ is defined by a very small linear matrix inequality.
In this section, we analyze qualitatively a very simple idea of how to improve the spectrahedral outer approximation. The price we will have to pay is of course that we will need more linear matrix
inequalities (of the same size however).
Roughly, the idea is to ``move the origin''. More precisely, choose a point
$a\in C(p)\setminus Z(p)$ that is different from the origin. By Theorem \ref{rzshift}, the polynomial
$p(x+a)$ is again a real zero polynomial and we have $C(p(x+a))+a=C(p)$. In general, we do however
not have that $S(p(x+a))+a=S(p)$. This seems like a theoretical defect at first sight but
turns out to be a fortunate fact that we can take advantage of. Namely, we have
$C(p(x+a))\subseteq S(p(x+a))$ by Theorem \ref{relaxation} and therefore
$C(p)=C(p(x+a))+a\subseteq S(p(x+a))+a$ so that $S(p(x+a))+a$ is another outer spectrahedral
relaxation of $C(p)$ that will in general be different from $S(p)$.
Hence the intersection \[S(p)\cap(S(p(x+a))+a)\] will in general be an improved outer approximation
of $C(p)$. It is defined by two linear matrix inequalities of size $n+1$ each
which could of course be combined into a single one of size $2n+2$.
Instead of choosing two points inside $C(p)\setminus Z(p)$, namely the origin and $a$, we could now more
generally choose
finitely many points $a_1,\ldots,a_k\in C(p)\setminus Z(p)$ (the origin need not be among
them) and consider the spectrahedron
\[\bigcap_{i=1}^k(S(p(x+a_i))+a_i)\supseteq C(p)\]
defined by a linear matrix inequality of size $k(n+1)$. In practice, it seems like $S(p(x+a))+a$ tightly
approximates $C(p)$ in a neighborhood of $a\in C(p)\setminus Z(p)$. If this is right, then one would
get a very tight outer approximation by choosing $a_1,\ldots,a_k\in C(p)\setminus Z(p)$ in such way
that each point in $C(p)\cap Z(p)$ is close to one of the $a_i$. This might of course have a very
large price, namely that the number of points $k$ might have to be very large.

Now we want to support the just presented view on how to make the approximation tighter. To this end,
we prove a rather theoretical result that says that if we take \emph{all} points of $C(p)\setminus Z(p)$
instead of just finitely many, then the corresponding intersection equals $C(p)$. This is Corollary
\ref{intersectioncor} below. It does of course not imply that $C(p)$ is a spectrahedron since we deal
now with an \emph{infinite} intersection. In fact, we prove a more precise theorem that provides
evidence for our idea that it might be sufficient to choose
the points close to the boundary of $C(p)$ provided each boundary point is close to one of the $a_i$.
This is Theorem \ref{intersection} where we intersect not over all points of $C(p)\setminus Z(p)$ but
only over those lying outside of a fixed subset $D$ of $C(p)\setminus Z(p)$ which is closed in $\R^n$. One can picture
$C(p)$ as a potato, $Z(p)$ as its skin and $D$ as the peeled potato (where the removed part inevitably
is a bit more than the skin).

Our theoretical theorem actually works even for a certain polyhedral instead of spectrahedral
outer approximation that we will now introduce. This is a poor man's version of the spectrahedron introduced in Definition \ref{dfpencil}(a). When no intersection comes into play, we actually use
only an affine half-space or, in exceptional cases, the full space, namely the one
that is defined by the linear inequality that corresponds to the top left entry of the pencil that
defines the spectrahedron.

\begin{df}\label{halfspace}
Let $p\in\R[[x]]$ be a power series with $p(0)\ne0$ and $d\in\N_0$.
We call
\begin{align*}
P_d(p)&:=\{a\in\R^n\mid L_{p,d}(1)+L_{p,d}(x_1)a_1+\ldots+L_{p,d}(x_n)a_n\ge0\}
\end{align*}
the \emph{polyhedron associated to $p$ with respect to the virtual degree $d$}.
\end{df}

\begin{rem}\label{polyhedral-relaxation-remark}
\begin{enumerate}[(a)]
\item If $p\in\R[[x]]$ satisfies $\trunc_1p=1+a_1x_1+\ldots+a_nx_n$, then
\[P_d(p)=\{b\in\R^n\mid d+a_1b_1+\ldots+a_nb_n\ge0\}\]
by Definition \ref{dfriesz}, which gives $L_{p,d}(1)=d$ and $L_{p,d}(x_i)=a_i$.
\item Let $p\in\R[[x]]$ with $p(0)\ne0$. If $\trunc_1p$ is constant, then $P_d(p)=\R^n$. Otherwise $P_d(p)$ is an affine half-space.
\item If $p\in\R[x]$ is a real zero polynomial of degree at most $d$, then we have \[C(p)\subseteq S_d(p)\subseteq P_d(p)\] where the first
inclusion follows from Theorem \ref{relaxation} and the second is trivial.
\item Let $p\in\R[x]$ be a real zero polynomial of degree at most $d$. The inclusion \[C(p)\subseteq P_d(p)\]
is elementary, in contrast to the finer statement in (c). To show it, suppose WLOG $p(0)=1$, $\deg p=d$ and write $\trunc_1p=1+a_1x_1+\ldots+a_nx_n$
with $a_1,\ldots,a_n\in\R$. We have to show that $d+a_1b_1+\ldots+a_nb_n\ge0$ for all $b\in C(p)$. Fixing $b\in C(p)$,
write \[(*)\qquad p(tb)=\prod_{i=1}^e(1+d_it)\] for some $e\in\{0,\ldots,d\}$ and $d_1,\ldots,d_e\in\R^\times$. By Definition \ref{dfrcs}, we have that
$p(tb)$ has no roots in the interval $(0,1)$. In other words, we have $d_i\ge-1$ for all $i\in\{1,\ldots,e\}$. Extracting the coefficient of $t$ on both
sides of $(*)$, we get $a_1b_1+\ldots+a_nb_n=d_1+\ldots+d_e$ and therefore
\[d+a_1b_1+\ldots+a_nb_n=d+d_1+\ldots+d_e\ge d-e\ge0.\]
\end{enumerate}
\end{rem}

\begin{thm}\label{intersection}
Suppose $d\in\N_0$, $p\in\R[x]$ is a real zero polynomial of degree at most $d$, and $D\subseteq C(p)\setminus Z(p)$ is closed in $\R^n$. Then
\[C(p)=\bigcap_{a\in C(p)\setminus(Z(p)\cup D)}\bigl(S_d(p(x+a))+a\bigr)=\bigcap_{a\in C(p)\setminus(Z(p)\cup D)}\bigl(P_d(p(x+a))+a\bigr)\]
where the empty intersection is interpreted as $\R^n$.
\end{thm}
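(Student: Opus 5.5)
The plan is to prove the two inclusions $C(p)\subseteq\bigcap(S_d(p(x+a))+a)\subseteq\bigcap(P_d(p(x+a))+a)$, and then show that the polyhedral intersection is already contained in $C(p)$. For the first inclusion, fix $a\in C(p)\setminus(Z(p)\cup D)$. By Theorem \ref{rzshift}, $p(x+a)$ is a real zero polynomial with $C(p)=C(p(x+a))+a$. Its degree is $\deg p\le d$, so Theorem \ref{relaxation} (with $e=d$) gives $C(p(x+a))\subseteq S_d(p(x+a))$. Hence $C(p)\subseteq S_d(p(x+a))+a$. The inclusion $S_d(p(x+a))\subseteq P_d(p(x+a))$ is trivial, as noted in Remark \ref{polyhedral-relaxation-remark}(c), since the defining inequality of $P_d$ is the top-left entry of the pencil. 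This settles everything except the containment of the polyhedral intersection in $C(p)$.

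For that containment, fix $b\in\R^n\setminus C(p)$. The goal is to produce $a\in C(p)\setminus(Z(p)\cup D)$ with $b\notin P_d(p(x+a))+a$. Normalizing $p(x+a)/p(a)$ and using Remark \ref{polyhedral-relaxation-remark}(a), this condition reads
\[d+\frac{\nabla p(a)^T(b-a)}{p(a)}<0.\]
Since $b\notin C(p)$, the univariate polynomial $g(s):=p(sb)$ has a zero in $(0,1)$. Let $t_0\in(0,1)$ be the smallest such zero and set $c:=t_0b$. Then $c\in Z(p)$, and for $s\in[0,t_0)$ the point $a_s:=sb$ lies in $C(p)\setminus Z(p)$. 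Write $g(s)=g(0)\prod_i(1+d_is)$ with real $d_i$ (Proposition \ref{splitrz}). Then
\[\nabla p(a_s)^T(b-a_s)=(1-s)g'(s),\qquad\text{so}\qquad \frac{\nabla p(a_s)^T(b-a_s)}{p(a_s)}=(1-s)\sum_i\frac{d_i}{1+d_is}.\]

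The key step, and the only real computation, is the limit $s\to t_0^-$. At least one factor satisfies $1+d_it_0=0$, so $d_i=-1/t_0<0$. For each such $i$, the term $d_i/(1+d_is)$ tends to $-\infty$, because $1+d_is\downarrow0$. The terms with $1+d_it_0\ne0$ stay bounded, and the factor $1-s$ tends to $1-t_0>0$. Hence the whole expression tends to $-\infty$, and $d+\dots<0$ for all $s<t_0$ close enough to $t_0$. Finally, $D$ is closed and $D\cap Z(p)=\emptyset$, so $c\notin D$ and some neighborhood of $c$ misses $D$. Choosing $s$ close to $t_0$ therefore also gives $a_s\notin D$.

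The empty-intersection convention needs no separate treatment: the argument shows that whenever $C(p)\ne\R^n$, the index set is nonempty. The main care needed is in correctly identifying the linear coefficients of $p(x+a)/p(a)$ with $\nabla p(a)/p(a)$, and in checking the sign of the blow-up.
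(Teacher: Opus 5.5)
Your proof is correct and follows essentially the paper's route: restrict to the ray through $b$, push the base point toward the first zero of $p$ on that ray (which lies in $C(p)\cap Z(p)$ and hence outside the closed set $D$), and note that the linear coefficient of the normalized shifted polynomial tends to $-\infty$. The only cosmetic difference is how $L_{p_a,d}(x_i)$ is computed. You read it off directly as $\partial_i p(a)/p(a)$ via Remark~\ref{polyhedral-relaxation-remark}(a), whereas the paper first reduces to one variable via Proposition~\ref{applyMatrixToLFrom} and Lemma~\ref{restriction}(a) and then applies Proposition~\ref{dirac}; both give the same expression $d+(1-s)\sum_i d_i/(1+d_is)$.
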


\begin{proof}
Recall that $p(x+a)$ is a real zero polynomial for each $a\in C(p)\setminus Z(p)$ by Theorem \ref{rzshift}.

Both inclusions from left to right follow from Theorem \ref{relaxation} and Remark \ref{polyhedral-relaxation-remark}(c).
For each $a\in C(p)\setminus Z(p)$, we have $C(p)=C(p(x+a))+a$. Since $p(x+a)$ has degree at most $d$, Theorem \ref{relaxation} gives $C(p(x+a))\subseteq S_d(p(x+a))$, and Remark \ref{polyhedral-relaxation-remark}(c) gives $S_d(p(x+a))\subseteq P_d(p(x+a))$. Therefore
$C(p)\subseteq S_d(p(x+a))+a\subseteq P_d(p(x+a))+a$.

It remains to show that
\[C(p)\supseteq\bigcap_{a\in C(p)\setminus(Z(p)\cup D)}P_d(p(x+a))+a.\]
We even show that for each half-line $H$ emanating from the origin,
\[H\cap C(p)\supseteq H\cap\bigcap_{a\in H\cap(C(p)\setminus(Z(p)\cup D))}(P_d(p(x+a))+a),\]
where the empty intersection stands of course again for $\R^n$.
Using Proposition \ref{applyMatrixToLFrom}, one easily reduces to the case where $H$ is the positive first axis
\[H=\R_{\ge0}\times\{0\}\subseteq\R^n.\]
By Lemma \ref{restriction}(a) and Proposition \ref{applyMatrixToLFrom}, we can reduce to the case $n=1$.
The univariate polynomial $p(x_1,0,\ldots,0)$ again has degree at most $d$. 
So suppose from now on that
we have just one variable $x=x_1$. Then $H=\R_{\ge0}$. WLOG $p(0)=1$. Write
\[p=\prod_{i=1}^d(1+a_ix)\]
with $a_1,\ldots,a_d\in\R$. The roots of $p$ are then the $-\frac1{a_i}$ with $a_i\ne0$.
If none of these roots is positive, then $C(p)\cap H=\R_{\ge0}$ and there is nothing to show.
From now on we can therefore
suppose that $d\ge1$ and \[r:=-\frac1{a_1}\] is the smallest positive root of $p$ so that
$H\cap C(p)=[0,r]$. In particular, $r>0$ and $a_1<0$. Since $D$ is closed in $\R^n$ and $r\notin D$, we can choose $\ep>0$ such that $r-\ep>0$ and
$H\cap D\subseteq[0,r-\ep]$. Then $(r-\ep,r)\subseteq H\cap(C(p)\setminus(Z(p)\cup D))$.
It therefore is enough to show that
\[(*)\qquad \bigcap_{a\in(r-\ep,r)}P_d(p(x+a))+a\subseteq(-\infty,r].\]
For $a\in\R\setminus Z(p)\supseteq(r-\ep,r)$, the polynomial
\[(**)\qquad p_a:=\frac{p(x+a)}{p(a)}=\prod_{i=1}^d\frac{1+a_i(x+a)}{1+a_ia}=\prod_{i=1}^d\left(1+\frac{a_i}{1+a_ia}x\right)\]
has constant coefficient $1$. Proposition \ref{dirac} hence implies that
$L_{p_a,d}$ is for each $a\in\R\setminus Z(p)\supseteq(r-\ep,r)$ integration with respect to the
sum of the Dirac measures in the points $\frac{a_1}{1+a_1a},\ldots,\frac{a_d}{1+a_da}$.
Now suppose that $b$ lies in the left hand side of $(*)$.
Then we have
\[d+(b-a)\sum_{i=1}^d\frac{a_i}{1+a_ia}=L_{p_a,d}(1)+(b-a)L_{p_a,d}(x)\ge0\]
for all $a\in(r-\ep,r)$. Now let $a$ converge to $r$ from below.
We have that $1+a_1a$ converges to $0$ from above.
Hence the summand $\frac{a_1}{1+a_1a}$ converges to $-\infty$. The summand $\frac{a_i}{1+a_ia}$ shows the same
behavior in the case where $a_i=a_1$. All other summands converge to some real number.
Hence the whole sum converges to $-\infty$. The factor $b-a$ converges to $b-r$ from above. If $b-r$ were positive, then
\[d+(b-a)\sum_{i=1}^d\frac{a_i}{1+a_ia}\]
would converge to $-\infty$ while remaining nonnegative.
Hence $b-r\le0$, so $b$ lies in $(-\infty,r]$, the right-hand side of $(*)$.
\end{proof}

\begin{cor}\label{intersectioncor}
Let $p\in\R[x]$ be a real zero polynomial.
Then
\[C(p)=\bigcap_{a\in C(p)\setminus Z(p)}\bigl(S(p(x+a))+a\bigr).\]
\end{cor}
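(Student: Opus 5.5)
This is the special case of Theorem \ref{intersection} with $D=\emptyset$ and $d:=\deg p$. The only point needing care is that Theorem \ref{intersection} uses $S_d(p(x+a))$ with a fixed virtual degree $d$. The corollary instead uses $S(p(x+a))=S_{\deg p(x+a)}(p(x+a))$, whose virtual degree is the actual degree of the shifted polynomial.

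First, I would observe that translation preserves degree: $\deg p(x+a)=\deg p$ for every $a\in\R^n$. The reason is that the homogeneous top-degree part of $p(x+a)$ equals that of $p$. Consequently $S(p(x+a))=S_d(p(x+a))$ with $d=\deg p$, by Definition \ref{dfpencil}(b).

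Second, I would take $D:=\emptyset$, which is trivially closed in $\R^n$ and contained in $C(p)\setminus Z(p)$. Theorem \ref{intersection} then gives
\[C(p)=\bigcap_{a\in C(p)\setminus Z(p)}\bigl(S_d(p(x+a))+a\bigr)=\bigcap_{a\in C(p)\setminus Z(p)}\bigl(S(p(x+a))+a\bigr).\]

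Finally, the index set is nonempty, since $0\in C(p)\setminus Z(p)$ by Remark \ref{nonzeroatorigin}. So the convention on empty intersections is not even needed. I expect no real obstacle; the only step to check is the degree invariance in the first step.
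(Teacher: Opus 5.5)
Your proposal is correct and is exactly the intended argument: the paper states this corollary without a separate proof, as the specialization of Theorem \ref{intersection} to $D=\emptyset$ and $d=\deg p$. Your check that $\deg p(x+a)=\deg p$, so that $S(p(x+a))=S_d(p(x+a))$, is precisely the one detail needed to make that specialization literal.
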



\section{Exactness for quadratic real zero polynomials}\label{sQuadraticExact}

It is quite trivial that $C(p)=S(p)$ for linear real zero polynomials $p\in\R[x]$: if $p=1+a^Tx$ with $a\in\R^n$, then
$M_p(b)=(1+a^Tb)\binom1a\binom1a^T$ for all $b\in\R^n$ by Proposition \ref{dirac}, so that $S(p)=\{b\in\R^n\mid1+a^Tb\ge0\}=C(p)$.
Since $S(p)$ depends only on the cubic part
$\trunc_3p$ of $p$ by Remark \ref{dependsonlyoncubicpart}, there seems to be no way
that $C(p)=S(p)$ in general if the degree of the real zero polynomial $p$ is bigger than three.
But worse than that, one lacks in general exactness also for cubic real zero polynomials as the reader
will easily find. The next theorem shows that our relaxation at least is exact for quadratic real zero polynomials. 

\begin{thm}\label{quadratic}
Let $p$ be a quadratic real zero polynomial. Then $C(p)=S(p)$.
\end{thm}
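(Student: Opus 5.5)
We need to prove $S(p)\subseteq C(p)$; the inclusion $C(p)\subseteq S(p)$ is Theorem \ref{relaxation}. The plan is to test membership in $S(p)$ along each ray through the origin and show that a point outside $C(p)$ violates a suitable quadratic-form inequality.

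First we normalize. Assume WLOG $p(0)=1$ and $\deg p=2$, and take $a\in\R^n\setminus C(p)$. We want a vector $v\in\R^{n+1}$ with $v^TM_p(a)v<0$. As in the proof of Theorem \ref{relaxation},
\[v^TM_p(a)v=L_p\bigl((1+a^Tx)(v_0+b^Tx)^2\bigr),\qquad b:=(v_1,\ldots,v_n)^T.\]
Restrict to the plane spanned by $a$ and $b$ by setting $r:=p(sa+tb)\in\R[s,t]$. This is a real zero polynomial of degree at most $2$ with $r(0)=1$. Corollary \ref{vcor} (with zero padding) gives hermitian $A,B\in\C^{2\times2}$ such that $r=\det(I_2+sA+tB)$. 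Proposition \ref{LFormOfRidgePolyTimesLinearPoly} with virtual degree $2=\deg p$ then yields
\[v^TM_p(a)v=\tr\bigl((I_2+A)(v_0I_2+B)^2\bigr).\]

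The key choice is to take $b:=a$. Then $r$ depends only on $s+t$, and we may take $B=A$. Since $a\notin C(p)$, the univariate polynomial $p(\la a)=\det(I_2+\la A)$ has a root in $(0,1)$. Hence $A$ has an eigenvalue $\mu<-1$ with eigenvector $w$. Let $\nu$ be the other eigenvalue, so the spectrum of $A$ is $\{\mu,\nu\}$. Now
\[\tr\bigl((I+A)(v_0I+A)^2\bigr)=(1+\mu)(v_0+\mu)^2+(1+\nu)(v_0+\nu)^2.\]
We choose $v_0:=-\nu$ to kill the second term. The first term becomes $(1+\mu)(\mu-\nu)^2$, which is negative whenever $\mu\ne\nu$.

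The main obstacle is the degenerate case $\mu=\nu$, where $A=\mu I_2$ and $p(\la a)=(1+\mu\la)^2$. In this case the choice $b=a$ gives $(1+\mu)\cdot 2(v_0+\mu)^2$, which is still negative for any $v_0\ne-\mu$. So whenever $\mu<-1$ we can always find $v_0$ with a negative value, and this case causes no difficulty.

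A second subtlety is when $\deg p(\la a)<2$, so that the padded matrix $A$ has a zero eigenvalue. The argument above covers this automatically, since $\nu=0$ is allowed; the only requirement is an eigenvalue $\mu<-1$, which exists because $a\notin C(p)$. We should also confirm that Proposition \ref{LFormOfRidgePolyTimesLinearPoly} applies when $a=b$. It does: the expression $p(sa+ta)=\det(I+(s+t)A)$ is a valid representation with $A=B$, and the proof of that proposition only uses Proposition \ref{applyMatrixToLFrom} and Proposition \ref{hur}, neither of which requires $a$ and $b$ to be independent.

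Thus $M_p(a)\not\succeq0$, so $a\notin S(p)$, and therefore $S(p)\subseteq C(p)$.
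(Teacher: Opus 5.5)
Your proof is correct and is essentially the paper's argument: testing with $v=(v_0,a)$ restricts $L_p$ to the line through $a$, where it is the sum of Dirac masses at $\mu,\nu$. Choosing $v_0=-\nu$ (or any $v_0\ne-\mu$ when $\mu=\nu$) is exactly the interpolation step of Theorem \ref{relaxationhierarchy}(c), and your padded case $\nu=0$ reproduces the paper's bottom-right-entry argument for $\deg q=1$. You differ only in packaging: you skip the coordinate change and Lemma \ref{restriction}(c), and the appeal to Corollary \ref{vcor} is unnecessary, since $A=\operatorname{diag}(\mu,\nu)$ from the factorization of $p(\la a)$ already satisfies $p(sa+ta)=\det(I_2+sA+tA)$.
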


\begin{proof}
From Theorem \ref{relaxation}, we know already $C(p)\subseteq S(p)$. To show the reverse $S(p)\subseteq C(p)$, we show that it holds
after intersecting with an arbitrary line through the origin. Instead of an arbitrary line, we can without loss of generality consider the
first axis in $\R^n$ by Proposition \ref{applyMatrixToLFrom}.
By Lemma \ref{restriction}(c), it is enough to show $S_{\deg p}(q)\subseteq C(q)$ for
$q:=p(x_1,0,\ldots,0)\in\R[x_1]$. If $\deg q=0$, then $C(q)=\R$ and there is nothing to show.
If $\deg q=1$, say $q=1+ax_1$ with a non-zero $a\in\R$, then the bottom right entry of $M_{q,\deg p}$ is $a^2+x_1a^3$ by Proposition
\ref{dirac} and hence $S_{\deg p}(q)\subseteq\{b\in\R\mid a^2+a^3b\ge0\}=\{b\in\R\mid 1+ab\ge0\}$. Finally, if
$\deg q=2$, then $S_{\deg p}(q)=S(q)=S^{(1)}(q)=C(q)$ by Theorem \ref{relaxationhierarchy}.
\end{proof}

The preceding proof makes use of the Helton--Vinnikov Theorem \ref{vinnikov} or Corollary \ref{vcor} indirectly through Theorem \ref{relaxation}. 
However, we need it only for polynomials of degree at most two in two variables. So we can use Example \ref{hv2} instead (together with the trivial cases of degree at most one) and therefore our proof is self-contained.

The next corollary was explicitly mentioned by Netzer and Thom \cite[Corollary 5.4]{nt} but most likely
was known before. Netzer and Thom use a hermitian linear matrix inequality of size
$2^{\lfloor\frac n2\rfloor}$ to describe a rigidly convex set given by a quadratic polynomial in $\R^n$ whereas
we need just a symmetric linear matrix inequality of size only $n+1$. While the proof of Netzer and Thom
is still very interesting for other reasons,
the result could actually also be proven by reducing it, via projective space, to the case of the unit ball
(cf. \cite[Page 958]{gar}). The well-known description of the unit ball in $\R^n$ by a
symmetric linear matrix inequality of size $n$ (see for example \cite[Page 591]{kum1}) shows that one can
for $n\ge2$ even get down to size $n$ instead of $n+1$. If $n=2^k+1$ for some $k\in\N_0$, then Kummer
shows that $n$ is the minimal size of a real symmetric linear matrix inequality describing the unit ball.
In general, he shows that $\frac n2$ is a lower bound \cite[Theorem 1]{kum1}.

\begin{cor}\label{quspec}
The rigidly convex set defined by a quadratic real zero polynomial is always a spectrahedron.
\end{cor}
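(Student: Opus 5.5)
The plan is to obtain Corollary \ref{quspec} directly from Theorem \ref{quadratic} by reading off the definition of $S(p)$. Let $C$ be the rigidly convex set defined by a quadratic real zero polynomial $p\in\R[x]$. By Definition \ref{dfrcs}, $C=C(p)$, and by Theorem \ref{quadratic} we have $C(p)=S(p)$.

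It then suffices to observe that $S(p)$ is a spectrahedron in the sense of the definition given after Proposition \ref{detexample}. By Definition \ref{dfpencil}(a),(b), $S(p)=\{a\in\R^n\mid A_0+a_1A_1+\ldots+a_nA_n\succeq0\}$, where the $A_i$ are the real symmetric $(n+1)\times(n+1)$ matrices built from the values of $L_p$ on monomials of degree at most $3$. Real symmetric matrices are hermitian, so this is literally the solution set of a linear matrix inequality of size $n+1$. By Remark \ref{realsymmetric}, nothing changes if one insists on real symmetric coefficients.

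I do not expect any real obstacle. The only point to check is that ``quadratic'' covers exactly the degree-two case handled by Theorem \ref{quadratic}. If one also wants to include polynomials of degree at most two, the linear case $S(p)=C(p)$ is noted at the start of Section \ref{sQuadraticExact}, and the constant case gives $C(p)=\R^n$, which is a spectrahedron via the empty inequality, for instance $1\succeq0$. As the paper remarks, the argument is self-contained because Example \ref{hv2} suffices in place of Corollary \ref{vcor} inside Theorem \ref{relaxation}.
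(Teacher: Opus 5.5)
Your proposal is correct and is exactly the paper's implicit argument: Theorem \ref{quadratic} gives $C(p)=S(p)$, and by Definition \ref{dfpencil} the set $S(p)$ is the solution set of a real symmetric linear matrix inequality of size $n+1$. Your remarks on the degenerate cases of degree at most one are a harmless addition.
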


\section{Tightening rigidly convex sets}\label{sTightening}

In this section, let $d\in\N$ and $y=(y_1,\ldots,y_n)$ and $z=(z_1,\ldots,z_{d-1})$. For $p\in\R[x]$ with $p(0)\neq0$, we extend the definition of $L_p$ to a linear map $\R[x,y,z]\to\R[y,z]$ so that the $y_i$ and $z_j$ are treated as constants, that is,
\begin{equation*}
    L_p(z^\gamma y^\beta x^\alpha)=z^\gamma y^\beta L_p(x^\alpha).
\end{equation*}

\begin{thm}\label{RigidlyConvexSetAsSliceOfNonNegPolys}
    Let $p\in\R[x]$ be a real zero polynomial of degree $d$. Then the rigidly convex set $C(p)$ of $p$ is equal to
    \begin{equation*}
        \{a\in\R^n\mid L_p((1+a^Tx)(y^Tx-z_1)^2\cdots(y^Tx-z_{d-1})^2)\geq0\text{ on }\R^n\times\R^{d-1}\}.
    \end{equation*}
\end{thm}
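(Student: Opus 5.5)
The plan is to evaluate the polynomial $L_p((1+a^Tx)(y^Tx-z_1)^2\cdots(y^Tx-z_{d-1})^2)$ pointwise in $(y,z)\in\R^n\times\R^{d-1}$ using the trace formulas of Subsection \ref{ssLFormTrace}, and then treat the two inclusions separately. Throughout I fix $(y,z)$ and write $q(t):=\prod_{j=1}^{d-1}(t-z_j)^2\in\R[t]$. This $q$ is nonnegative on $\R$, and its degree $2(d-1)$ is high enough to let $q$ vanish at up to $d-1$ prescribed real points. I assume WLOG $p(0)=1$, which changes neither $C(p)$ nor $L_p$ (Definition \ref{dfriesz}).

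For the inclusion of $C(p)$ in the set, let $a\in C(p)$ and fix $(y,z)$. I would apply Proposition \ref{LFormOfRidgePolyTimesLinearPoly} with $b:=y$. This requires hermitian $A,B\in\C^{d\times d}$ with $p(sa+ty)=\det(I_d+sA+tB)$, which Corollary \ref{vcor} provides after padding, exactly as in the proof of Theorem \ref{relaxation}. The proposition gives the value $\tr((I_d+A)q(B))$.

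\begin{itemize}
\item As in the proof of Theorem \ref{relaxation}, $a\in C(p)$ forces every eigenvalue of $A$ to be at least $-1$, so $I_d+A\succeq0$.
\item We have $q(B)=\bigl(\prod_j(B-z_jI_d)\bigr)^2$, and this is the square of a hermitian matrix, so $q(B)\succeq0$.
\end{itemize}

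The trace of a product of two positive semidefinite matrices is nonnegative, hence the value is $\ge0$. This is essentially a rerun of Theorem \ref{relaxation} with $(v_0+t)^2$ replaced by $q$.

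For the reverse inclusion, let $a\notin C(p)$. I must exhibit one $(y,z)$ at which the value is negative, and I would take $y:=a$. The argument then becomes univariate, and no Helton--Vinnikov is needed. The polynomial $(1+a^Tx)q(a^Tx)$ equals $r(a^Tx)$ with $r(t):=(1+t)q(t)$. Proposition \ref{RestrictLFormToLine} gives
\[L_p(r(a^Tx))=\tr(r(-C(a))),\]
where $C$ is the companion matrix of $p$.

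\begin{itemize}
\item Since $p(ta)=\det(I_d-tC(a))$ and $p$ is real zero, we can write $p(ta)=\prod_{i=1}^d(1+\mu_it)$ with $\mu_1,\ldots,\mu_d\in\R$. We pad with $\mu_i=0$ when $\deg p(ta)<d$.
\item Then the eigenvalues of $-C(a)$, counted with algebraic multiplicity, are $\mu_1,\ldots,\mu_d$. Consequently $\tr(r(-C(a)))=\sum_{i=1}^d(1+\mu_i)q(\mu_i)$.
\item Because $a\notin C(p)$, some root of $p(ta)$ lies in $(0,1)$, i.e.\ some $\mu_1<-1$.
\item The set $\{\mu_i\mid\mu_i\ne\mu_1\}$ has at most $d-1$ elements. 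I choose $z_1,\ldots,z_{d-1}$ to list these values, repeating entries if necessary; this works when $d\ge2$, and for $d=1$ the variable $z$ is empty and $q=1$.
\item Then $q(\mu_i)=0$ whenever $\mu_i\ne\mu_1$, and $q(\mu_1)>0$. The sum therefore equals $k(1+\mu_1)q(\mu_1)<0$, where $k$ is the multiplicity of $\mu_1$.
\end{itemize}

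This mirrors the interpolation argument of Theorem \ref{relaxationhierarchy}(c).

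The main point requiring care is the eigenvalue bookkeeping in the second inclusion. One must check that the degree-$d$ companion matrix evaluated at $a$ really has spectrum $\{\mu_i\}$ including the padded zeros. This follows from $\det(tI-C(a))=t^dp(a/t)$, but only if we read the homogenization in degree exactly $d=\deg p$. One must also make sure that $z$ is real, which holds precisely because the $\mu_i$ are real by the real zero property. The first inclusion is routine given Proposition \ref{LFormOfRidgePolyTimesLinearPoly}.
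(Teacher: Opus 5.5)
Your proof is correct and follows the paper's argument essentially verbatim. For one inclusion you use the trace identity of Proposition \ref{LFormOfRidgePolyTimesLinearPoly} together with $I_d+A\succeq0$; for the other you take $y:=a$ and choose $z$ to annihilate, via Proposition \ref{RestrictLFormToLine}, every eigenvalue $\mu_i$ of $-C(a)$ other than the offending $\mu_1<-1$.

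One small fix is needed. If all $\mu_i$ equal $\mu_1$ and $d\ge2$, there is nothing to ``repeat'', so pad $z$ with any value different from $\mu_1$. The paper pads with zeros, which works since $\mu_1<-1$.
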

\begin{proof}
    We may assume $p(0)=1$. Indeed, for $q:=\frac p{p(0)}$ we have $C(p)=C(q)$ and $L_p=L_q$.
    For the inclusion `$\subseteq$', let $a\in C(p)$, $y\in\R^n$ and $z\in\R^{d-1}$. By Corollary \ref{vcor}, after padding by zero blocks if $\deg p(sa+ty)<d$ as in the proof of Theorem \ref{relaxation}, there are hermitian $A,B\in\C^{d\times d}$ with $p(sa+ty)=\det(I_d+sA+tB)$. Then
    \begin{align*}
        &L_p((1+a^Tx)(y^Tx-z_1)^2\cdots(y^Tx-z_{d-1})^2)\\
        &\stackrel{\ref{LFormOfRidgePolyTimesLinearPoly}}{=}\tr((I_d+A)(B-z_1I)^2\cdots(B-z_{d-1}I)^2)\\
        &\geq0
    \end{align*}
    since both $I_d+A$ and $(B-z_1I)^2\cdots(B-z_{d-1}I)^2$ are positive semidefinite, the first follows from $a\in C(p)$.

    Conversely, let $a\in\R^n$ satisfy
    \begin{equation*}
        L_p((1+a^Tx)(y^Tx-z_1)^2\cdots(y^Tx-z_{d-1})^2)\geq0\text{ on }\R^n\times\R^{d-1}.
    \end{equation*}
    Suppose $a\notin C(p)$. Then there is $\lambda\in(0,1)$ with $p(\lambda a)=0$. Since $p$ is real zero and $p(0)=1$, we can write
    \[p(ta)=\prod_{i=1}^d(1+\zeta_it)\]
    with $\zeta_i\in\R$, where factors $\zeta_i=0$ account for $\deg(p(ta))<d$, and where $\zeta_d=-\lambda^{-1}<-1$. Rearrange the roots so that $\zeta_{k+1}=\cdots=\zeta_d$ and $\zeta_d\notin\{\zeta_1,\ldots,\zeta_k\}$ for some $k\in\{0,\ldots,d-1\}$. Set $y:=a$, and let the coordinates of $z\in\R^{d-1}$ be $\zeta_1,\ldots,\zeta_k$, followed by $d-k-1$ zeros. Empty products are read as $1$, and a power with exponent $0$ as $1$. With
    \[q:=(1+t)\prod_{j=1}^k(t-\zeta_j)^2\,t^{2(d-k-1)}\in\R[t]\]
    and with $C$ the companion matrix of $p$, Proposition \ref{RestrictLFormToLine} yields
    \[L_p(q(a^Tx))=\tr(q(-C(a))).\]
    The univariate polynomial $r:=p(ta)\in\R[t]$ equals $\det(I_d-tC(a))$, so the eigenvalues of $C(a)$ are $-\zeta_1,\ldots,-\zeta_d$. Hence $\tr(q(-C(a)))=\sum_{i=1}^d q(\zeta_i)$. Proposition \ref{dirac}, applied to $r$, gives the same value. For $i\le k$ the summand $q(\zeta_i)$ vanishes, and $\zeta_{k+1}=\cdots=\zeta_d$, so
    \[\sum_{i=1}^d q(\zeta_i)=(d-k)(1+\zeta_d)\prod_{j=1}^k(\zeta_d-\zeta_j)^2\,\zeta_d^{2(d-k-1)}.\]
    Here $d-k\ge1$ and $1+\zeta_d<0$. The product is positive because $\zeta_d$ is distinct from $\zeta_1,\ldots,\zeta_k$, and $\zeta_d^{2(d-k-1)}>0$. The sum is therefore negative, contradicting the hypothesis. Thus $a\in C(p)$.
\end{proof}

\begin{rem}
    Nonnegativity on $\R^n\times\R_{\geq-1}^{d-1}$ already suffices. If $a\notin C(p)$, write $p(ta)=\prod_{i=1}^d(1+\zeta_it)$ as in the proof and let the coordinates of $z$ be the roots $\zeta_i>-1$, filled up by zeros if necessary. These coordinates lie in $\R_{\geq-1}$. For the corresponding univariate polynomial $q$, one has $q(\zeta_i)=0$ whenever $\zeta_i>-1$, whereas $q(\zeta_i)<0$ whenever $\zeta_i<-1$. The evaluation in the proof is therefore negative. Hence the rigidly convex set of $p$ is
    \begin{equation*}
        \{a\in\R^n\mid L_p((1+a^Tx)(y^Tx-z_1)^2\cdots(y^Tx-z_{d-1})^2)\geq0\text{ on }\R^n\times\R_{\geq-1}^{d-1}\}.
    \end{equation*}
\end{rem}

\begin{rem}
    Like in \cite[Section 3]{kpv}, we can strengthen the condition of the homogeneous polynomial
    \begin{equation*}
        L_p((1+a^Tx)(y^Tx-z_1)^2\cdots(y^Tx-z_{d-1})^2)
    \end{equation*}
    being non-negative on $\R^n\times\R^{d-1}$ to get a tightening of the rigidly convex set of $p$, that is, we get a set contained in $C(p)$. See Subsection \ref{sectionKPV} for the connection to \cite{kpv}.
\end{rem}

\begin{df}\label{dfTightening}
    Let $p\in\R[x]$ be a real zero polynomial of degree $d$ and let $k\in\N_0$. Then define the $k$-\emph{tightening} of the rigidly convex set of $p$ by
    \begin{multline*}
        R_k(p):=\{a\in\R^n\mid\|y\|^{2k}L_p((1+a^Tx)(y^Tx-z_1)^2\cdots(y^Tx-z_{d-1})^2)\\
        \text{ is a sum of squares in }\R[y,z]\},
    \end{multline*}
    where $\|y\|^2=y_1^2+\ldots+y_n^2$.
\end{df}

\begin{rem}\label{remMultiplier}
    One could also use the multiplier $(\|y\|^2+\|z\|^2)^k$ instead of $\|y\|^{2k}$ in Definition \ref{dfTightening}; Corollary \ref{TighteningIsATightening} below would remain true with the same proof. We have chosen $\|y\|^{2k}$ because of Theorem \ref{StrictTightening} below: the form $L_p((1+a^Tx)(y^Tx-z_1)^2\cdots(y^Tx-z_{d-1})^2)$ is never positive definite on $\R^n\times\R^{d-1}$ if $d\ge3$, since it vanishes at all points with $y=0$ and $z_1\cdots z_{d-1}=0$ (see Remark \ref{remZerosAtYZero}), so that Reznick's theorem on uniform denominators cannot be applied directly. For each fixed $y\ne0$ it is strictly positive for every $z$. Lemma \ref{HermitePD} gives the matrix form of this fact: the Hermite matrix $H_a(y)$ is positive definite. This is what the multiplier $\|y\|^{2k}$ exploits.
\end{rem}

\begin{df}[{\cite[Section 3.1]{np}}]
    A \emph{spectrahedral shadow} is the image of a spectrahedron under a linear map.
\end{df}

\begin{ex}[{\cite[Example 3.1.2]{np}}]\label{SosSpectrahedralShadow}
    The sums of squares in $\R[x]_{2d}$ form a spectrahedral shadow. It is the image of the cone of positive semidefinite matrices under a Gram map
    \begin{equation*}
        \Sym_N(\R)\to\R[x]_{2d},\, A\mapsto m^TAm,
    \end{equation*}
    where $m$ is a vector of all monomials of degree $d$ and $\Sym_N(\R)$ is the set of all symmetric matrices of size $N$ (which is the length of $m$).
\end{ex}

\begin{cor}\label{TighteningIsATightening}
    Let $p\in\R[x]$ be a real zero polynomial of degree $d$ and $k\in\N_0$. Then
    \begin{equation*}
        R_0(p)\subseteq R_k(p)\subseteq R_{k+1}(p)\subseteq C(p).
    \end{equation*}
    Furthermore, $R_k(p)$ is a spectrahedral shadow.
\end{cor}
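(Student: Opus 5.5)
The plan has three parts: the chain of inclusions in $k$, the final inclusion into $C(p)$, and the shadow property. Throughout, write
\[F_a(y,z):=L_p\bigl((1+a^Tx)(y^Tx-z_1)^2\cdots(y^Tx-z_{d-1})^2\bigr)\in\R[y,z].\]
This is a polynomial whose coefficients depend affinely on $a$, because $L_p$ acts only on the $x$-variables and is linear. Note that the first inclusion $R_0(p)\subseteq R_k(p)$ is just the case $0\le k$ of the monotonicity step below, applied repeatedly.

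\textbf{Monotonicity.} Suppose $\|y\|^{2k}F_a$ is a sum of squares, say $\sum_j g_j^2$. Then
\[\|y\|^{2(k+1)}F_a=\sum_{i=1}^n\sum_j (y_ig_j)^2\]
is again a sum of squares. Hence $R_k(p)\subseteq R_{k+1}(p)$, and by induction $R_0(p)\subseteq R_k(p)$.

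\textbf{Inclusion $R_k(p)\subseteq C(p)$.} Let $a\in R_k(p)$. Then $\|y\|^{2k}F_a\ge0$ on all of $\R^n\times\R^{d-1}$, since sums of squares are nonnegative. For $y\ne0$ we may divide by $\|y\|^{2k}>0$, so $F_a(y,z)\ge0$ on the dense set $\{y\ne0\}$. Since $F_a$ is a polynomial, hence continuous, it is nonnegative everywhere. Theorem \ref{RigidlyConvexSetAsSliceOfNonNegPolys} then gives $a\in C(p)$. (In fact the proof of that theorem only uses the point $y=a$, so even the density argument could be avoided when $a\ne0$. The case $a=0$ is trivial, because $0\in C(p)$.)

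\textbf{Spectrahedral shadow.} The polynomial $G_a:=\|y\|^{2k}F_a$ has degree at most $2k+2(d-1)+1$ in $(y,z)$, and only even degrees can be sums of squares. I would fix $N$ as the number of monomials in $(y,z)$ of degree at most $k+d-1$ and let $m$ be the vector of these monomials. Then $a\in R_k(p)$ if and only if there is a symmetric matrix $Q\succeq0$ of size $N$ with $m^TQm=G_a$. The coefficients of $G_a$ are affine in $a$. So the set
\[\{(a,Q)\mid Q\succeq0,\ m^TQm=G_a\}\]
is cut out by a linear matrix inequality together with affine linear equations, and is therefore a spectrahedron. The equations can be absorbed by restricting to an affine subspace, or by adding them as $1\times1$ blocks in both signs. $R_k(p)$ is the projection of this spectrahedron to the $a$-coordinates, as in Example \ref{SosSpectrahedralShadow}.

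\textbf{Main obstacle.} The only step needing care is the degree bookkeeping that justifies using monomials of degree at most $k+d-1$. Any sum-of-squares representation $\sum g_j^2$ of a polynomial of degree at most $2(k+d-1)+1$ has $\deg g_j\le k+d-1$, because the leading forms of the $g_j^2$ cannot cancel. If $G_a$ has odd degree, no such $Q$ exists, which is consistent with $G_a$ not being a sum of squares. Beyond this, everything is routine.
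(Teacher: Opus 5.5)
Your proof is correct and follows the paper's argument. Multiplying a sum of squares by $\|y\|^2$ gives monotonicity; nonnegativity on the dense set $\{y\ne0\}$ together with continuity and Theorem \ref{RigidlyConvexSetAsSliceOfNonNegPolys} gives $R_k(p)\subseteq C(p)$; and $R_k(p)$ is the preimage of the sums-of-squares cone under an affine map, which you make explicit through the Gram-matrix lift (your degree bound $2k+2(d-1)+1$ is slightly loose, since $F_a$ is in fact homogeneous of degree $2(d-1)$ in $(y,z)$, but this does not affect the argument).
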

\begin{proof}
    The inclusions $R_0(p)\subseteq R_k(p)\subseteq R_{k+1}(p)$ hold since the product of $\|y\|^2$ with a sum of squares is again a sum of squares. If $a\in R_k(p)$, then the polynomial $L_p((1+a^Tx)(y^Tx-z_1)^2\cdots(y^Tx-z_{d-1})^2)$ is nonnegative on $\{(y,z)\mid y\ne0\}$ and hence, by continuity, on all of $\R^n\times\R^{d-1}$, so that $a\in C(p)$ by Theorem \ref{RigidlyConvexSetAsSliceOfNonNegPolys}. For the last statement, note that $a\mapsto\|y\|^{2k}L_p((1+a^Tx)(y^Tx-z_1)^2\cdots(y^Tx-z_{d-1})^2)$ is an affine linear map from $\R^n$ to a finite-dimensional space of polynomials, and that $R_k(p)$ is the preimage of the cone of sums of squares under this map. Preimages of spectrahedral shadows under affine linear maps are spectrahedral shadows, so the claim follows from Example \ref{SosSpectrahedralShadow}.
\end{proof}

\begin{rem}
    If one has a real zero polynomial $p$ and wants to show that its rigidly convex set is a spectrahedral shadow, one approach is to show that $R_k(p)=C(p)$ for some $k\in\N_0$. Of course, this is not always possible as the counterexample \cite{OpenAI2} by OpenAI showed.
\end{rem}

Let us explore some cases where $R_0(p)=C(p)$. The first one is trivial but worth noting.

\begin{pro}\label{QuadraticTightening}
    Let $p\in\R[x]$ be a real zero polynomial of degree $d\le2$. Then $R_0(p)=C(p)$.
\end{pro}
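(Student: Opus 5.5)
Since $R_0(p)\subseteq C(p)$ is already part of Corollary \ref{TighteningIsATightening}, I only need to show $C(p)\subseteq R_0(p)$. So fix $a\in C(p)$. I will show that the polynomial
\[F_a(y,z):=L_p\bigl((1+a^Tx)(y^Tx-z_1)^2\cdots(y^Tx-z_{d-1})^2\bigr)\in\R[y,z]\]
is a sum of squares. The point is that for $d\le2$ this polynomial has degree at most $2$ in $(y,z)$. In that degree range, nonnegativity and being a sum of squares coincide. As in the earlier proofs, I first replace $p$ by $\frac p{p(0)}$, which changes neither $C(p)$ nor $L_p$, so I may assume $p(0)=1$.

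\emph{Case $d=1$.} There are no $z$-variables and the product of squares is empty. Write $p=1+b^Tx$. By Proposition \ref{dirac}, $F_a=L_p(1+a^Tx)=1+b^Ta$, which is a real constant. Since $a\in C(p)$, we have $1+b^Ta\ge0$; this can be read off from Proposition \ref{rcsdet} or from Remark \ref{polyhedral-relaxation-remark}(d). Hence $F_a=\bigl(\sqrt{1+b^Ta}\bigr)^2$ is a square.

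\emph{Case $d=2$.} Now there is one variable $z=z_1$, and $F_a(y,z)=L_p\bigl((1+a^Tx)(y^Tx-z)^2\bigr)$. By Lemma \ref{pencilevalhierarchy}, or directly from the identity $v^TM_p(a)v=L_p((1+a^Tx)(v_0+v_1x_1+\cdots+v_nx_n)^2)$ used in the proof of Theorem \ref{relaxation}, we get
\[F_a(y,z)=w^TM_p(a)\,w,\qquad w:=(-z,y_1,\ldots,y_n)^T.\]
So $F_a$ is a quadratic form in $(y,z)$ whose Gram matrix is $M_p(a)$ up to the sign change $z\mapsto -z$. Theorem \ref{relaxation} gives $M_p(a)\succeq0$ because $a\in C(p)$; equivalently, Theorem \ref{RigidlyConvexSetAsSliceOfNonNegPolys} says $F_a\ge0$ everywhere. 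Taking a spectral decomposition $M_p(a)=\sum_i\lambda_iu_iu_i^T$ with $\lambda_i\ge0$ gives
\[F_a=\sum_i\bigl(\sqrt{\lambda_i}\,u_i^Tw\bigr)^2,\]
which is a sum of squares of linear forms in $(y,z)$. Therefore $a\in R_0(p)$.

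There is no real obstacle in this argument. The only step needing any care is identifying $F_a$ with the quadratic form given by the pencil $M_p(a)$, which reduces to bookkeeping with the sign of $z$. The substantive input, namely $M_p(a)\succeq0$ for $a\in C(p)$, is already supplied by Theorem \ref{relaxation}.
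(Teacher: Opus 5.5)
Your proof is correct and follows essentially the paper's route: for $d=1$ the form is a nonnegative constant, and for $d=2$ it is a positive semidefinite quadratic form in $(y,z_1)$, hence a sum of squares of linear forms. The only difference is cosmetic. You certify positive semidefiniteness by identifying the Gram matrix with $M_p(a)$ and invoking Theorem \ref{relaxation}, whereas the paper gets nonnegativity directly from Theorem \ref{RigidlyConvexSetAsSliceOfNonNegPolys}; your identification also shows $R_0(p)=S(p)$ when $d=2$.
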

\begin{proof}
    For $d=1$, the polynomial $L_p((1+a^Tx))$ is a constant, which is nonnegative for $a\in C(p)$ by Theorem \ref{RigidlyConvexSetAsSliceOfNonNegPolys} and hence a sum of squares. For $d=2$, the polynomial $L_p((1+a^Tx)(y^Tx-z_1)^2)$ is a quadratic form in the $n+1$ variables $(y,z_1)$. For $a\in C(p)$ it is nonnegative on $\R^n\times\R$ by Theorem \ref{RigidlyConvexSetAsSliceOfNonNegPolys}, i.e., a positive semidefinite quadratic form, and hence a sum of squares of linear forms.
\end{proof}

\begin{thm}\label{DeterminantalTightening}
    Let $p\in\R[x]$ be a real zero polynomial of degree $d$ and let $k\in\N$. Suppose that there are hermitian matrices $A_1,\ldots,A_n\in\C^{dk\times dk}$ with
    \[p^k=\det(I_{dk}+x_1A_1+\ldots+x_nA_n).\]
    Then $R_0(p)=C(p)$. In particular, $R_0(p)=C(p)$ if $p$ itself has a determinantal representation $p=\det(I_d+x_1A_1+\ldots+x_nA_n)$ with hermitian $A_i\in\C^{d\times d}$.
\end{thm}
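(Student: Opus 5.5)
Since $R_0(p)\subseteq C(p)$ by Corollary \ref{TighteningIsATightening}, only the inclusion $C(p)\subseteq R_0(p)$ needs proof. The plan is to use the determinantal representation of $p^k$ to write the form $L_p((1+a^Tx)(y^Tx-z_1)^2\cdots(y^Tx-z_{d-1})^2)$ explicitly as a trace of a hermitian square weighted by a positive semidefinite matrix, which is visibly a sum of squares in $\R[y,z]$.

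First, by Proposition \ref{logprop}(b) we have $L_{p^k}=kL_p$, and $C(p^k)=C(p)$. By Proposition \ref{hur} (with virtual degree $dk=\deg p^k$), $L_{p^k}$ is given on monomials by normalized traces of Hurwitz products of $A_1,\dots,A_n$. Consequently, for any polynomial $f\in\R[x]$ one gets $L_{p^k}(f)=\tr(f(A))$ in the symmetrized sense. Concretely, for linear forms this gives
\[
L_{p^k}\bigl((1+a^Tx)q(y^Tx)\bigr)=\tr\bigl((I+A_a)q(A_y)\bigr),
\]
where $A_a:=\sum_i a_iA_i$ and $A_y:=\sum_i y_iA_i$. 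To see this, expand by linearity in $a$: the term $\tr(\hur_{(1,m)}(A_a,A_y))/(m+1)=\tr(A_aA_y^m)$ by cyclicity, exactly as in the proof of Proposition \ref{LFormOfRidgePolyTimesLinearPoly}, now with $y$ symbolic and the reduction through Proposition \ref{applyMatrixToLFrom} to two variables $s,t$. This identity holds as a polynomial identity in $y$ and $z$, since both sides are polynomials and they agree for every real $y,z$.

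Next, let $a\in C(p)=C(p^k)$. By Proposition \ref{rcsdet}, $P:=I+A_a\succeq0$, so $P=G^*G$ for some $G\in\C^{dk\times dk}$. Set $Q(y,z):=(A_y-z_1I)\cdots(A_y-z_{d-1}I)$, a matrix polynomial with hermitian factors that commute with each other, so $Q^*=Q$ and $Q^2=\prod_j(A_y-z_jI)^2$. Then
\[
kL_p(\cdots)=\tr(PQ^2)=\tr\bigl((GQ)^*(GQ)\bigr)=\sum_{i,j}|(GQ)_{ij}|^2.
\]
Each entry $(GQ)_{ij}$ is a polynomial in $(y,z)$ with complex coefficients, and $|u+\ii v|^2=u^2+v^2$ with $u,v\in\R[y,z]$. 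Dividing by $k>0$ therefore exhibits the form as a sum of squares in $\R[y,z]$, so $a\in R_0(p)$. The special case follows by taking $k=1$.

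The main obstacle is the first step: making rigorous that $L_{p^k}$ applied to $(1+a^Tx)q(y^Tx)$ with symbolic $y,z$ equals the matrix trace. I will handle it by proving the identity for fixed real $y,z$ via Propositions \ref{applyMatrixToLFrom}, \ref{hur} and cyclicity of the trace, and then invoking polynomial identity in $(y,z)$. One also has to check that the restriction $\det(I+sA_a+tA_y)=p^k(sa+ty)$ is the determinantal representation that Proposition \ref{hur} requires; this is immediate.
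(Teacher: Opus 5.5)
Your proposal is correct and follows essentially the same route as the paper. The paper obtains $kL_p\bigl((1+a^Tx)\prod_j(b^Tx-c_j)^2\bigr)=\tr\bigl((I+A(a))\prod_j(A(b)-c_jI)^2\bigr)$ by applying Proposition \ref{LFormOfRidgePolyTimesLinearPoly} directly to $p^k$, with virtual degree $dk$ and the matrices $A(a),A(b)$; your re-derivation via Propositions \ref{applyMatrixToLFrom}, \ref{hur} and cyclicity of the trace is exactly that proposition's proof. From there the paper argues as you do: it passes to a polynomial identity in $(y,z)$ and writes the trace as $\tr(N^*N)$ with $N=BM$, where $B$ is the positive semidefinite square root of $I+A(a)$ (your $G$).
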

\begin{proof}
    One inclusion is just Corollary \ref{TighteningIsATightening}. For the other inclusion, let $a\in C(p)=C(p^k)$. Write $A:=x_1A_1+\ldots+x_nA_n\in\C[x]^{dk\times dk}$. By Proposition \ref{rcsdet}, $a\in C(p^k)$ means that $I+A(a)$ is positive semidefinite. Let $B\in\C^{dk\times dk}$ be the positive semidefinite square root of $I+A(a)$. Since $p^k(0)=1$, Proposition \ref{logprop}(a) gives $L_{p^k,dk}=kL_{p,d}=kL_p$. For fixed $b\in\R^n$ and $c\in\R^{d-1}$, Proposition \ref{LFormOfRidgePolyTimesLinearPoly}, applied to $p^k$ with virtual degree $dk$ and $q:=(t-c_1)^2\cdots(t-c_{d-1})^2$, yields
    \begin{multline*}kL_p((1+a^Tx)(b^Tx-c_1)^2\cdots(b^Tx-c_{d-1})^2)\\=\tr\bigl((I+A(a))(A(b)-c_1I)^2\cdots(A(b)-c_{d-1}I)^2\bigr).\end{multline*}
    Both sides are polynomials in $(b,c)$, so this is an identity in $\R[y,z]$ after replacing $(b,c)$ by $(y,z)$. Set $M:=(A(y)-z_1I)\cdots(A(y)-z_{d-1}I)\in\C[y,z]^{dk\times dk}$. The factors commute and are hermitian for real arguments, so $M(b,c)$ is hermitian for all $(b,c)\in\R^n\times\R^{d-1}$. Using cyclic invariance of the trace, we get
    \begin{multline*}kL_p((1+a^Tx)(y^Tx-z_1)^2\cdots(y^Tx-z_{d-1})^2)=\tr((I+A(a))M^2)\\=\tr(BBMM)=\tr(MBBM)=\tr(N^*N),\end{multline*}
    where $N:=BM\in\C[y,z]^{dk\times dk}$ and $N^*$ denotes the matrix obtained from $N^T$ by conjugating all coefficients, so that $N^*(b,c)=N(b,c)^*$ for real $(b,c)$. Writing $N_{ij}=u_{ij}+\ii v_{ij}$ with $u_{ij},v_{ij}\in\R[y,z]$, we obtain
    \[kL_p((1+a^Tx)(y^Tx-z_1)^2\cdots(y^Tx-z_{d-1})^2)=\sum_{i,j=1}^{dk}\bigl(u_{ij}^2+v_{ij}^2\bigr).\]
    Dividing by $k$ shows that $L_p((1+a^Tx)(y^Tx-z_1)^2\cdots(y^Tx-z_{d-1})^2)$ is a sum of squares in $\R[y,z]$. Hence $a\in R_0(p)$.
\end{proof}

\begin{cor}\label{DetCor}
    Let $p\in\R[x]$ be a real zero polynomial of degree $d$. Then $R_0(p)=C(p)$ if one of the following holds:
    \begin{enumerate}[(a)]
        \item $n=2$.
        \item $n=3$ and $d=3$.
    \end{enumerate}
\end{cor}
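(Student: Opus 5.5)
The plan is to reduce both cases to Theorem~\ref{DeterminantalTightening}. It suffices to show that $p^k$ has a hermitian determinantal representation of size $dk$ for some $k\in\N$, normalized so that the constant term is $I_{dk}$. First we may assume $p(0)=1$, since $C(p)$, $L_p$ and hence $R_0(p)$ do not change when $p$ is replaced by $\frac p{p(0)}$.

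\textbf{Case (a), $n=2$.} Here Corollary~\ref{vcor} applies directly. It gives hermitian $A_1,A_2\in\C^{d\times d}$ with $p=\det(I_d+x_1A_1+x_2A_2)$, where $d=\deg p$. So the hypothesis of Theorem~\ref{DeterminantalTightening} holds with $k=1$, and we get $R_0(p)=C(p)$. The degenerate case $d=0$ is excluded, since $d\in\N$ in this section.

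\textbf{Case (b), $n=3$ and $d=3$.} Pass to the homogenization $P\in\R[x_0,x_1,x_2,x_3]$ of degree $3$. By Proposition~\ref{hbrz} and the relationship in Remark~\ref{nonzeroate}, $P$ is a hyperbolic cubic form in four variables, hyperbolic with respect to $e=(1,0,0,0)$. We then invoke the known result that every hyperbolic cubic in four variables has a definite determinantal representation. This can be phrased either as:
\begin{itemize}
\item the cubic-surface case of the generalized Lax conjecture, i.e.\ the existence of a symmetric or hermitian definite $3\times 3$ representation of hyperbolic cubic surfaces, due to Buckley and Ko\v{s}ir together with Plaumann--Sturmfels--Vinzant; or
\item failing that, a representation of some power $P^k$ of size $3k$.
\end{itemize}
From $P=\det(x_0A_0+x_1A_1+x_2A_2+x_3A_3)$ with $A_0$ definite, replace each $A_i$ by $A_0^{-1/2}A_iA_0^{-1/2}$; if $A_0\prec0$, work with $-A_0$ and absorb the sign. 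Dehomogenizing at $x_0=1$ then yields $p=\det(I_3+x_1A_1'+x_2A_2'+x_3A_3')$ with hermitian $A_i'$, possibly up to a positive scalar that is fixed by $p(0)=1$. Theorem~\ref{DeterminantalTightening} then gives $R_0(p)=C(p)$.

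\textbf{Main obstacle.} The only nontrivial step is citing the correct external theorem for case (b) in the precise form needed: a hermitian representation of size exactly $3$, not merely a representation of some unspecified size. If $\deg p<3$ after restriction issues, no restriction is needed, because $d=\deg p=3$ is given. If only a representation of a multiple $P^k$, or of $P\cdot h$ for a hyperbolic cofactor $h$, is available, then the plan needs adjusting. A representation of $P^k$ of size $3k$ is fine, by the power version of Theorem~\ref{DeterminantalTightening}. A cofactor representation would not fit the theorem as stated, so I would make sure to cite the size-$3$ statement for smooth cubic surfaces, and handle singular hyperbolic cubics, which are reducible or have limited singularities, by a limiting or direct argument. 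Alternatively, I would note that $R_0$ is closed under limits of representations via compactness of normalized determinantal representations.
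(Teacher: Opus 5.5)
Your reduction matches the paper's: normalize $p(0)=1$, get (a) from Corollary \ref{vcor} and Theorem \ref{DeterminantalTightening} with $k=1$, and for (b) homogenize, take a definite hermitian $3\times 3$ representation of the hyperbolic cubic form $P$, conjugate by $U_0^{-1/2}$, dehomogenize, and apply Theorem \ref{DeterminantalTightening}. Part (a) is complete as written. Part (b) is also complete whenever $P$ defines a smooth surface, because that case is exactly Buckley--Ko\v{s}ir \cite[Theorem 6.4]{bk}.

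The gap is (b) for singular $P$, for instance reducible cubics or irreducible ones with singular points. Your citations do not cover this case, and your fallback (``a limiting or direct argument'', ``$R_0$ is closed under limits of representations'') is only gestured at. The paper handles it by citing \cite[Corollary 11.9]{hk}, which gives a definite hermitian $3\times 3$ representation for every hyperbolic cubic form in four variables. To turn your limiting idea into a proof you need two concrete steps.

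\emph{Step (i), approximation.} By Nuij \cite{nui}, the cubics that are strictly hyperbolic with respect to $e$ form a Euclidean-open set whose closure contains every hyperbolic cubic. Intersect this open set with the Zariski-open dense set of smooth cubic forms. Then $P$ is a limit of smooth hyperbolic $P_m$ with $P_m(e)=1$. Buckley--Ko\v{s}ir and your normalization give each of them a representation $P_m=\det(x_0I_3+x_1A_1^{(m)}+x_2A_2^{(m)}+x_3A_3^{(m)})$ with hermitian $A_i^{(m)}$.

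\emph{Step (ii), compactness.} For $b\in\R^3$, the eigenvalues of the hermitian matrix $A^{(m)}(b)$ are the negatives of the roots of the monic cubic $t\mapsto P_m(t,b)$. These roots are bounded uniformly in $m$ because the coefficients converge. So the $A_i^{(m)}$ are bounded in operator norm, and a subsequence converges to hermitian $A_i$. By continuity, $P=\det(x_0I_3+x_1A_1+x_2A_2+x_3A_3)$.

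The set that must be closed under limits is the set of cubic forms admitting such a normalized hermitian $3\times3$ representation, not $R_0(p)$. Step (i) is the ingredient genuinely missing from your proposal; it is also what the proof of the cited corollary rests on.
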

\begin{proof}
    We may assume $p(0)=1$. Item (a) follows from Corollary \ref{vcor} and Theorem \ref{DeterminantalTightening}.

    For Item (b), by Theorem \ref{DeterminantalTightening} it suffices to find hermitian $A_1,A_2,A_3\in\C^{3\times3}$ such that $p=\det(I_3+x_1A_1+x_2A_2+x_3A_3)$. Let $P\in\R[x_0,x_1,x_2,x_3]$ be the homogenization of $p$; it is a cubic form which is hyperbolic with respect to $e:=(1,0,0,0)$ by Proposition \ref{hbrz}. By \cite[Corollary 11.9]{hk}, $P$ has a definite hermitian determinantal representation: there are hermitian $U_0,\ldots,U_3\in\C^{3\times3}$ with $P=\det(x_0U_0+\ldots+x_3U_3)$ and $U(e)$ definite. Since $\det(U(e))=P(e)=1>0$ and the matrices have odd size, $U(e)$ is positive definite. Setting $A_i:=U_0^{-1/2}U_iU_0^{-1/2}$ and using $P(e)=1$ gives $p=\det(I_3+x_1A_1+x_2A_2+x_3A_3)$. The smooth case is due to Buckley and Ko{\v{s}}ir \cite[Theorem 6.4]{bk}. The singular case is included in the cited corollary; its proof uses Nuij's approximation by strictly hyperbolic polynomials \cite{nui}.
\end{proof}

\begin{rem}\label{remZerosAtYZero}
    Let $p\in\R[x]$ be a real zero polynomial of degree $d$ and $a\in\R^n$. Substituting $y=0$ in $(1+a^Tx)(y^Tx-z_1)^2\cdots(y^Tx-z_{d-1})^2$ gives $(1+a^Tx)z_1^2\cdots z_{d-1}^2$, so that
    \[L_p((1+a^Tx)(y^Tx-z_1)^2\cdots(y^Tx-z_{d-1})^2)\big|_{y=0}=z_1^2\cdots z_{d-1}^2\,L_p(1+a^Tx).\]
    Hence this polynomial vanishes at every point $(0,c)$ with $c_1\cdots c_{d-1}=0$. For $d\ge3$ these zeros form a nontrivial union of linear subspaces, so the polynomial is not a positive definite form on $\R^n\times\R^{d-1}$, cf.\ Remark \ref{remMultiplier}.
\end{rem}

\begin{df}\label{dfStrict}
    A polynomial $p\in\R[x]$ of degree $d$ is called \emph{strict real zero} if $p(0)\ne0$ and for every $a\in\R^n\setminus\{0\}$ the univariate polynomial
    \begin{equation*}
        p(ta)\in\R[t]
    \end{equation*}
    has exactly $d$ distinct roots in $\R\cup\{\infty\}$. Here $\infty$ counts as a root if $\deg(p(ta))<d$.

    So, $p$ is strict real zero if and only if for any $a\in\R^n\setminus\{0\}$ there are pairwise distinct $z_1,\ldots,z_d\in\R$ such that
    \begin{equation*}
        p(ta)=p(0)(1+z_1t)\cdots(1+z_dt).
    \end{equation*}
    In particular, every strict real zero polynomial is a real zero polynomial. The dehomogenization (Proposition \ref{hbrz}) of a strictly hyperbolic polynomial in the sense of \cite{nui,KummerNetzer} is strict real zero. We will show in Theorem \ref{StrictTightening} below that the interior of $C(p)$ is exhausted by the tightenings $R_k(p)$ if $p$ is strict real zero. For this we need the Hermite matrix, which we introduce next.
\end{df}

\subsection{Connection to Hermite and Bézout matrices}\label{ssHermiteBezout}
There is a classical result originally due to Hermite on how to count the number of real and complex roots of a monic univariate polynomial. In the following, we will introduce a multivariate analog for real zero polynomials. This can be found, for example, in \cite{npt}. Another method is to use the Bézout matrix of a polynomial and its derivative. We will also use a generalization thereof. This connection of real zero polynomials and the Bézout matrix has been exploited before; see \cite{kum2}.

\begin{df}\label{dfHermiteMatrix}
    Let $p\in\R[x]$ be a polynomial of degree $d$ with $p(0)\neq0$ and denote by $C\in\R[x]^{d\times d}$ its companion matrix. Then the \emph{Hermite matrix} of $p$ is the matrix
    \begin{equation*}
        H=(\tr(C^{i+j-2}))_{i,j=1,\ldots,d}\in\R[x]^{d\times d}.
    \end{equation*}
\end{df}

\begin{df}
    We call a symmetric matrix polynomial $M\in\R[x]^{d\times d}$ \emph{positive semidefinite} if $M(a)$ is positive semidefinite for all $a\in\R^n$.
\end{df}

\begin{pro}[{\cite[Corollary 1.2]{npt}}]
    Let $p\in\R[x]$ be a polynomial with $p(0)\neq0$. Then $p$ is real zero if and only if its Hermite matrix is positive semidefinite.
\end{pro}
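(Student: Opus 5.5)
The plan is to reduce positive semidefiniteness of the Hermite matrix $H$ to a pointwise statement on lines through the origin. Then the eigenvalues of the companion matrix evaluated at a point become exactly the reciprocal roots of the restriction of $p$ to the corresponding line.

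First I normalize to $p(0)=1$. This changes neither the companion matrix (by definition it is that of $\frac p{p(0)}$) nor the real zero property. For fixed $a\in\R^n$, as recalled in the proof of Theorem \ref{RigidlyConvexSetAsSliceOfNonNegPolys}, we have $p(ta)=\det(I_d-tC(a))$. So if $\mu_1,\ldots,\mu_d\in\C$ denote the eigenvalues of $C(a)$ with algebraic multiplicity, then $p(ta)=\prod_{i=1}^d(1-\mu_it)$. Here the $\mu_i$ equal to $0$ account for the drop in degree. Consequently
\[H(a)=\Bigl(\sum_{k=1}^d\mu_k^{i+j-2}\Bigr)_{i,j}=V^TV,\]
where $V=(\mu_k^{j-1})_{k,j}\in\C^{d\times d}$ is a Vandermonde matrix. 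The trace of $C(a)^m$ is the power sum of the eigenvalues, so this factorization is immediate.

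For ``only if'': if $p$ is real zero, then every $p(ta)$ splits over $\R$ by Proposition \ref{splitrz}. Hence all $\mu_k$ are real, $V$ is real, and $w^TH(a)w=\|Vw\|^2\ge0$ for all real $w$. Thus $H$ is positive semidefinite.

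For ``if'': this is Hermite's classical theorem, and it is the main step. For real $w$ the quadratic form $w^TH(a)w=\sum_k\bigl(\sum_j w_j\mu_k^{j-1}\bigr)^2$ is a sum of squares of linear forms $\ell_k(w)=\sum_jw_j\mu_k^{j-1}$. Non-real $\mu_k$ come in conjugate pairs, and I group them by distinct values. For a conjugate pair $\mu,\bar\mu$ with multiplicity $m$, the contribution is $m(\ell^2+\bar\ell^2)=2m\bigl((\operatorname{Re}\ell)^2-(\operatorname{Im}\ell)^2\bigr)$. I will use that the linear forms $w\mapsto\sum_jw_j\nu^{j-1}$ for the distinct eigenvalues $\nu$ are linearly independent, since there are at most $d$ of them and the Vandermonde matrix on distinct nodes has full rank. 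From this, the real forms consisting of $\ell_\nu$ for real $\nu$ together with $\operatorname{Re}\ell_\nu$ and $\operatorname{Im}\ell_\nu$ for each pair $\{\nu,\bar\nu\}$ are linearly independent over $\R$. So I can choose a real $w$ with $\operatorname{Im}\ell_\nu(w)=1$ for one pair and all other forms vanishing. This gives $w^TH(a)w=-2m<0$.

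Therefore positive semidefiniteness of $H(a)$ for every $a$ forces all eigenvalues of every $C(a)$ to be real. Hence each $p(ta)$ splits over $\R$, and $p$ is real zero by Proposition \ref{splitrz}.

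The only subtle point I anticipate is the bookkeeping with repeated eigenvalues. Working with distinct values and multiplicities as above handles it, since the signature of $H(a)$ then counts (number of distinct real $\mu$) $+$ (number of distinct conjugate pairs) positive squares and (number of distinct conjugate pairs) negative squares.
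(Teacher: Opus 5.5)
Your proof is correct and takes the same route as the paper. Both restrict to lines through the origin, where $H(a)$ is the Newton-sum Hankel matrix of $\det(tI-C(a))$, whose roots are the reciprocal roots of $p(ta)$, and then use Hermite's root counting. The only difference is that the paper simply cites Hermite's theorem, while you reprove the part you need through the factorization $H(a)=V^TV$ and the signature count, which is the standard proof of that theorem.
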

\begin{proof}
    This is an easy consequence of Hermite's method for counting roots of univariate polynomials; see, for example, \cite[Theorem 4.59]{bpr}.
\end{proof}

\begin{pro}\label{HermiteForm}
    Let $p\in\R[x]$ be a polynomial of degree $d$ with $p(0)\ne0$. Denote by $C\in\R[x]^{d\times d}$ its companion matrix and by $H\in\R[x]^{d\times d}$ its Hermite matrix. Then 
    \begin{equation*}
        H(y)=(L_p((-y^Tx)^{i+j-2}))_{i,j=1,\ldots,d}.
    \end{equation*}
    More generally, for $a\in\R^n$ set
    \begin{equation*}
        H_a:=\left(L_p\left((1+a^Tx)(-y^Tx)^{i+j-2}\right)\right)_{i,j=1,\ldots,d}\in\R[y]^{d\times d},
    \end{equation*}
    so that $H_0=H(y)$ arises from $H$ by substituting $y$ for $x$. Then the $(i,j)$-entry of $H_a$ is a form of degree $i+j-2$ in $y$, and
    \begin{equation*}
        L_p((1+a^Tx)(y^Tx-z_1)^2\cdots(y^Tx-z_{d-1})^2)=v^TH_av,
    \end{equation*}
    where $v=(s_{d-1},\ldots,s_1,s_0)^T$ and $s_i\in\R[z]_i$ is the $i$th elementary symmetric polynomial in $z_1,\ldots,z_{d-1}$.
\end{pro}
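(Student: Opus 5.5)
The proof has three parts: the identification $H(y)=H_0$, the degree statement for the entries of $H_a$, and the quadratic-form identity. In each part I would work with $L_p$ through Proposition \ref{RestrictLFormToLine}, which evaluates $L_p$ on polynomials of $y^Tx$ by a trace of the companion matrix.

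\textbf{The identity $H(y)=H_0$.} Fix $y=b\in\R^n$ and apply Proposition \ref{RestrictLFormToLine} with $q=(-t)^{m}$, where $m=i+j-2$. This gives
\[L_p((-b^Tx)^{m})=\tr((-1)^{m}(-C(b))^{m})=\tr(C(b)^{m}),\]
which is the $(i,j)$-entry of $H(b)$. Both sides are polynomials in $b$, so they agree as elements of $\R[y]$.

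\textbf{Degree of the entries of $H_a$.} Expand
\[(1+a^Tx)(-y^Tx)^{m}=(-y^Tx)^{m}+(a^Tx)(-y^Tx)^{m}.\]
Each summand is a polynomial in $x$ whose coefficients are forms of degree $m$ in $y$. Since $L_p$ treats $y$ as constants, the $(i,j)$-entry $L_p$ of this expression is a form of degree $m=i+j-2$ in $y$.

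\textbf{The quadratic-form identity.} This is the main step.

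1. By Vieta, write
\[\prod_{j=1}^{d-1}(t-z_j)=\sum_{k=0}^{d-1}(-1)^{k}s_k\,t^{d-1-k}.\]
Putting $t=y^Tx$ gives
\[\prod_{j=1}^{d-1}(y^Tx-z_j)=\sum_{k=0}^{d-1}s_k\,(-1)^k(y^Tx)^{d-1-k}.\]

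2. Rewrite the sign in terms of $-y^Tx$. We have
\[(-1)^k(y^Tx)^{d-1-k}=(-1)^{d-1}(-y^Tx)^{d-1-k},\]
so the product equals $(-1)^{d-1}\sum_{k}s_k(-y^Tx)^{d-1-k}$.

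3. Square this. The global sign $(-1)^{d-1}$ disappears, giving
\[\Bigl(\sum_{k=0}^{d-1}s_k(-y^Tx)^{d-1-k}\Bigr)^2.\]

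4. Index $v$ so that $v_i=s_{d-i}$ for $i=1,\ldots,d$; this matches $v=(s_{d-1},\ldots,s_0)^T$. Then $s_{d-i}$ multiplies $(-y^Tx)^{i-1}$, and the square becomes
\[\sum_{i,j=1}^{d}v_iv_j(-y^Tx)^{i+j-2}.\]

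5. Multiply by $(1+a^Tx)$ and apply $L_p$. Since the $v_i$ lie in $\R[z]$ and $L_p$ is $\R[y,z]$-linear, the coefficients $v_iv_j$ pull out. The result is $\sum_{i,j}v_iv_j(H_a)_{ij}=v^TH_av$.

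The only real obstacle is sign and index bookkeeping: matching the ordering of $v$ with the powers, and tracking the sign $(-1)^{d-1}$, which squares away. Otherwise the argument is linearity of $L_p$ together with Proposition \ref{RestrictLFormToLine}.
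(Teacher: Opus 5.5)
Your proof is correct and follows the paper's argument essentially line for line. The first identity comes from Proposition \ref{RestrictLFormToLine} with $q=(-t)^{i+j-2}$, the degree claim from the fact that $L_p$ treats $y$ as constants, and the quadratic-form identity from expanding $\prod_{j}(y^Tx-z_j)=(-1)^{d-1}\sum_{i}s_i(-y^Tx)^{d-1-i}$, squaring, multiplying by $1+a^Tx$ and applying $L_p$; your explicit index and sign bookkeeping ($v_i=s_{d-i}$ paired with $(-y^Tx)^{i-1}$, and $(-1)^{d-1}$ squaring away) fills in exactly what the paper leaves implicit.
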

\begin{proof}
    The first equality follows immediately from Proposition \ref{RestrictLFormToLine}. The statement about the degrees is clear since $L_p$ maps forms of degree $k$ in $x$ to constants. For the last equality, note that
    \[\prod_{j=1}^{d-1}(y^Tx-z_j)=\sum_{i=0}^{d-1}(-1)^is_i\,(y^Tx)^{d-1-i}=(-1)^{d-1}\sum_{i=0}^{d-1}s_i\,(-y^Tx)^{d-1-i},\]
    square both sides, multiply by $1+a^Tx$ and apply $L_p$.
\end{proof}

\begin{pro}\label{HermiteSOSImpliesTighteningExact}
    Let $p\in\R[x]$ be a real zero polynomial of degree $d$. If, for all $a\in C(p)$, the matrix $H_a\in\R[y]^{d\times d}$ from Proposition \ref{HermiteForm}
    admits a decomposition $H_a=Q_a^TQ_a$ for some $Q_a\in\R[y]^{k_a\times d}$, then $R_0(p)=C(p)$.
\end{pro}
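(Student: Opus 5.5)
The inclusion $R_0(p)\subseteq C(p)$ is already contained in Corollary \ref{TighteningIsATightening}, so only $C(p)\subseteq R_0(p)$ needs an argument. Fix $a\in C(p)$ and let $Q_a\in\R[y]^{k_a\times d}$ be a matrix with $H_a=Q_a^TQ_a$, as in the hypothesis.

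The plan is to plug this factorization into the quadratic-form expression of Proposition \ref{HermiteForm}. That proposition gives, as an identity in $\R[y,z]$,
\[L_p\bigl((1+a^Tx)(y^Tx-z_1)^2\cdots(y^Tx-z_{d-1})^2\bigr)=v^TH_av,\]
where $v=(s_{d-1},\ldots,s_1,s_0)^T\in\R[z]^d$. The entries of $H_a$ lie in $\R[y]$ and the entries of $v$ lie in $\R[z]$, so all of them are polynomials in $\R[y,z]$ and the matrix identities may be multiplied freely. Substituting the factorization gives
\[v^TH_av=v^TQ_a^TQ_av=(Q_av)^T(Q_av)=\sum_{i=1}^{k_a}w_i^2,\qquad w:=Q_av\in\R[y,z]^{k_a}.\]
The right-hand side is a sum of squares in $\R[y,z]$. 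With the multiplier $\|y\|^{0}=1$, Definition \ref{dfTightening} then says exactly that $a\in R_0(p)$.

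There is no real obstacle here. The only points to check are bookkeeping: that the identity of Proposition \ref{HermiteForm} holds as polynomials in $(y,z)$ and not merely pointwise (it does, since it was derived from a polynomial identity in $x,y,z$ by applying $L_p$), and that $Q_a$ has real polynomial entries, so the $w_i$ are real polynomials. The normalization $p(0)=1$ is not needed, because $L_p$ and $C(p)$ are both invariant under scaling $p$.
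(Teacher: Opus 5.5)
Your proposal is correct and matches the paper's proof: $R_0(p)\subseteq C(p)$ comes from Corollary \ref{TighteningIsATightening}, and for $a\in C(p)$ you rewrite the form as $v^TH_av=(Q_av)^T(Q_av)$ via Proposition \ref{HermiteForm}. That is a sum of squares in $\R[y,z]$.
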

\begin{proof}
    One inclusion is clear since $R_0(p)$ is a tightening. For the other inclusion, let $a\in C(p)$. Denote by $v$ the vector $(s_{d-1},\ldots,s_1,s_0)^T$, where $s_i\in\R[z]_i$ is the $i$th elementary symmetric polynomial. By Proposition \ref{HermiteForm},
    \begin{multline*}
        L_p((1+a^Tx)(y^Tx-z_1)^2\cdots(y^Tx-z_{d-1})^2)=v^TH_av\\
        =(Q_av)^TQ_av\in\sum\R[y,z]^2,
    \end{multline*}
    as desired.
\end{proof}

We now use the Hermite matrix to prove that the tightenings exhaust the interior of $C(p)$ for strict real zero polynomials (Definition \ref{dfStrict}).

\begin{lem}\label{HermitePD}
    Let $p\in\R[x]$ be a strict real zero polynomial of degree $d$ and let $a\in C(p)\setminus Z(p)$. Then the matrix $H_a(b)\in\R^{d\times d}$ from Proposition \ref{HermiteForm} is positive definite for every $b\in\R^n\setminus\{0\}$.
\end{lem}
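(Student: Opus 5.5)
Fix $a\in C(p)\setminus Z(p)$ and $b\in\R^n\setminus\{0\}$. Since $L_p=L_{p/p(0)}$, we may assume $p(0)=1$. We show $w^TH_a(b)w>0$ for every nonzero $w=(w_1,\ldots,w_d)^T\in\R^d$.

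Put $q(t):=\sum_{i=1}^d w_i t^{i-1}\in\R[t]$; this is a nonzero polynomial of degree at most $d-1$. By the definition of $H_a$ in Proposition~\ref{HermiteForm},
\[w^TH_a(b)w=L_p\bigl((1+a^Tx)\,q(-b^Tx)^2\bigr).\]
To evaluate the right-hand side we use Proposition~\ref{LFormOfRidgePolyTimesLinearPoly}. It requires hermitian $A,B\in\C^{d\times d}$ with
\[p(sa+tb)=\det(I_d+sA+tB),\]
which Corollary~\ref{vcor} supplies, padding with zero blocks if the degree drops. With these matrices,
\[w^TH_a(b)w=\tr\bigl((I_d+A)\,q(-B)^2\bigr).\]

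We now collect three facts about $A$ and $B$.

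First, $I_d+A$ is positive definite. As in the proof of Theorem~\ref{relaxation}, $a\in C(p)$ forces every eigenvalue of $A$ to be at least $-1$. Moreover $\det(I_d+A)=p(a)\ne0$, so no eigenvalue equals $-1$.

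Second, $B$ has $d$ distinct eigenvalues. Setting $s=0$ gives $\det(I_d+tB)=p(tb)=\prod_{i=1}^d(1+z_it)$, where the $z_i$ are pairwise distinct reals because $p$ is strict real zero and $b\ne0$ (a root at $\infty$ corresponds to $z_i=0$). Hence the eigenvalues of $B$ are exactly $z_1,\ldots,z_d$, counted with multiplicity, and they are pairwise distinct.

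Third, $q(-B)\neq0$. The minimal polynomial of $B$ equals its characteristic polynomial and has degree $d$, while $q(-t)$ is a nonzero polynomial of degree at most $d-1$. Therefore $q(-B)$ is a nonzero hermitian matrix.

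It remains to combine these. Let $P:=I_d+A\succ0$ and $Q:=q(-B)$, which is hermitian and nonzero. Then
\[\tr(PQ^2)=\tr(QPQ)=\tr\bigl((P^{1/2}Q)^*(P^{1/2}Q)\bigr)=\|P^{1/2}Q\|_F^2.\]
Since $P^{1/2}$ is invertible and $Q\ne0$, we have $P^{1/2}Q\ne0$, so this quantity is strictly positive. Hence $w^TH_a(b)w>0$, and $H_a(b)$ is positive definite.

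The main point of the argument is the second fact: strictness is what makes the eigenvalues of $B$ distinct, and this is needed to rule out $q(-B)=0$ for a polynomial $q$ of degree below $d$. Everything else is routine.
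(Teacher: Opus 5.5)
Your proof is correct and follows essentially the same route as the paper. You rewrite the quadratic form as $\tr\bigl((I_d+A)\,q(-B)^2\bigr)$ via Proposition~\ref{LFormOfRidgePolyTimesLinearPoly}, obtain $I_d+A\succ0$ from $a\in C(p)\setminus Z(p)$, and use strictness to make the eigenvalues of $B$ pairwise distinct, so that no nonzero polynomial of degree $<d$ annihilates $B$. The only differences are cosmetic: you get $I_d+A\succ0$ from the bound in Theorem~\ref{relaxation} together with $\det(I_d+A)=p(a)\neq0$, and you invoke the minimal polynomial of $B$ where the paper uses diagonalizability.
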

\begin{proof}
    We may assume $p(0)=1$. Fix $b\ne0$ and choose hermitian $A,B\in\C^{d\times d}$ with $p(sa+tb)=\det(I_d+sA+tB)$ as in Proposition \ref{LFormOfRidgePolyTimesLinearPoly}. By that proposition, $(H_a(b))_{ij}=\tr((I_d+A)(-B)^{i+j-2})$. Every eigenvalue of $A$ is bigger than $-1$: an eigenvalue $\mu\le-1$ would give $p(\la a)=\det(I_d+\la A)=0$ for $\la=-\frac1\mu\in(0,1]$, contradicting $a\in C(p)\setminus Z(p)$. Hence $I_d+A\succ0$. Let $u\in\R^d$ and set $g:=\sum_{i=1}^du_i(-t)^{i-1}\in\R[t]$, a polynomial of degree at most $d-1$. Then
    \[u^TH_a(b)u=\tr\bigl((I_d+A)g(B)^2\bigr)=\tr\bigl((g(B)(I_d+A)^{1/2})^*(g(B)(I_d+A)^{1/2})\bigr)\ge0,\]
    with equality if and only if $g(B)=0$. Now $p(tb)=\det(I_d+tB)=\prod_{l=1}^d(1+\be_lt)$, where $\be_1,\ldots,\be_d$ are the eigenvalues of $B$. Since $p$ is strict real zero and $b\ne0$, the $\be_l$ are pairwise distinct (a root at $\infty$ of $p(tb)$ of multiplicity $m$ corresponds to exactly $m$ vanishing $\be_l$). The hermitian matrix $B$ is diagonalizable, so $g(B)=0$ if and only if $g(\be_l)=0$ for $l=1,\ldots,d$, which forces $g=0$ because $\deg g<d$, i.e., $u=0$.
\end{proof}

Reznick's theorem on uniform denominators \cite{rez} has been extended to homogeneous matrix polynomials, see \cite[Corollary 2]{dhl} and \cite[Theorem 2.3]{zf}; the proofs there, like ours, rest on the matrix version of Putinar's Positivstellensatz due to Scherer and Hol \cite[Theorem 2]{sh}, see also \cite[Theorem 3.7]{ks}. These results require all entries of the matrix polynomial to be forms of the same degree, which is not the case for the matrix $H_a$ from Proposition \ref{HermiteForm}: its $(i,j)$-entry is a form of degree $i+j-2$, and $H_a$ is singular at $y=0$. We therefore prove the variant we need. Recall that $S\in\R[y]^{d\times d}$ is called a \emph{sum of squares} (of matrix polynomials) if $S=Q^TQ$ for some $Q\in\R[y]^{m\times d}$.

\begin{lem}\label{MatrixReznick}
    Let $G\in\R[y]^{d\times d}$ be a symmetric matrix polynomial such that the $(i,j)$-entry of $G$ is a form of degree $i+j-2$ for all $i,j\in\{1,\ldots,d\}$, and suppose that $G(b)$ is positive definite for all $b\in\R^n\setminus\{0\}$. Then there is $k\in\N_0$ such that $\|y\|^{2k}G$ is a sum of squares.
\end{lem}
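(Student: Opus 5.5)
The plan is to reduce to the unit sphere, apply the matrix Positivstellensatz of Scherer and Hol there, and then rehomogenize the resulting certificate entrywise. The graded degree structure of $G$ is handled by the scaling identity
\[G(\lambda y)=D(\lambda)G(y)D(\lambda),\qquad D(\lambda):=\operatorname{diag}(1,\lambda,\ldots,\lambda^{d-1}),\]
which holds because the $(i,j)$-entry of $G$ is a form of degree $i+j-2$.

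\textbf{Step 1 (certificate on the sphere).} The semialgebraic set $\{b\mid 1-\|b\|^2\ge0,\ \|b\|^2-1\ge0\}$ is the unit sphere. The associated quadratic module is Archimedean, since it contains $1-\|y\|^2$. By hypothesis $G(b)\succ0$ on this compact set. The matrix version of Putinar's theorem \cite[Theorem 2]{sh} (see also \cite[Theorem 3.7]{ks}) therefore yields
\[G=Q^TQ+(1-\|y\|^2)H\]
with $Q\in\R[y]^{m\times d}$ and $H$ a symmetric matrix polynomial. The two multipliers of $\pm(1-\|y\|^2)$ combine into a single symmetric $H$.

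\textbf{Step 2 (parity normalization).} Apply the scaling identity with $\lambda=-1$, which gives $G(-y)=D(-1)G(y)D(-1)$. Substitute $y\mapsto-y$ in the certificate, conjugate by $D(-1)$, and average with the original. Each row $r(y)$ of $Q$ then gets paired with the row $r'(y):=r(-y)D(-1)$. Decompose $r=r_++r_-$, where $r_\pm(-y)D(-1)=\pm r_\pm(y)$. The cross terms cancel, and
\[r^Tr+r'^Tr'=2\bigl(r_+^Tr_++r_-^Tr_-\bigr).\]
After this we may assume every row $q$ of $Q$ satisfies $q(-y)D(-1)=\pm q(y)$. Concretely, the $j$-th entry of such a row only contains monomials of degree $\equiv j-1+\ep_q \pmod 2$, where the parity bit $\ep_q\in\{0,1\}$ is fixed for the whole row.

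\textbf{Step 3 (homogenization).} Choose $N$ larger than all degrees occurring in $Q$.
\begin{enumerate}[(i)]
\item For a row with $\ep_q=0$, replace each monomial $\mu$ in entry $j$ by $\|y\|^{N+j-1-\deg\mu}\mu$. The exponent is even by Step 2, so the result is a polynomial. The new entry is a form of degree $N+j-1$ agreeing with the old one on the sphere.
\item For a row with $\ep_q=1$, use degree $N+1+j-1$ in the same way.
\item For the rows with $\ep_q=0$, additionally replace $q^Tq$ by $\|y\|^2q^Tq=\sum_k(y_kq)^T(y_kq)$, which is still a sum of squares.
\end{enumerate}
This is where $N$ must be taken of a fixed parity relative to the two row types, so that all contributions end up with the same degree. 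The resulting matrix $\tilde Q^T\tilde Q$ has $(i,j)$-entry a form of degree $2N+i+j$. On the sphere it agrees with $Q^TQ$, hence with $G$.

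\textbf{Step 4 (conclusion).} Choose $N$ even and set $k:=(N+2)/2$. Then $\|y\|^{2k}G_{ij}$ and $(\tilde Q^T\tilde Q)_{ij}$ are forms of the same degree $2k+i+j-2$. They agree on the unit sphere, so they agree everywhere by homogeneity. Hence $\|y\|^{2k}G=\tilde Q^T\tilde Q$ is a sum of squares.

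The main obstacle is Steps 2 and 3: making the degree bookkeeping consistent, so that only even powers of $\|y\|$ are ever needed and all rows land in the same degree. The point to verify first is the parity decomposition in Step 2, since it is exactly what makes entrywise homogenization with polynomial multipliers possible. Applying the Scherer–Hol theorem itself, with the equality constraint encoded as two inequalities, should be routine.
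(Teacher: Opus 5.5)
Your argument is essentially the paper's proof. Both apply Scherer--Hol on the sphere, then symmetrize using the scaling identity at a negative scalar. Both split each row of $Q$ by the parity of monomial degree relative to the column index, homogenize with even powers of $\|y\|$ only, and put an extra factor $\|y\|^2=\sum_j y_j^2$ on one parity class. The paper packages your Steps 2 and 3 with an auxiliary variable: it forms $R_i=w^kQ_i(y/w)D_w$ and averages over $w=\pm\|b\|$ rather than over $y\mapsto -y$. Its even and odd parts $E_i,O_i$ in $w$ then correspond exactly to your two homogenized parity classes $r_\pm$.

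There is, however, an arithmetic slip in Step 4. By Step 3, $(\tilde Q^T\tilde Q)_{ij}$ is a form of degree $2N+i+j$. With $k=(N+2)/2$, the entry $\|y\|^{2k}G_{ij}$ has degree $2k+i+j-2=N+i+j$. These differ because $N>0$, so the homogeneity argument does not apply as written.

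Take $k:=N+1$ instead, and keep $N$ even. Evenness of $N$ is what makes the exponents in Step 3(i),(ii) even. The degree match between the two row types comes from Step 3(iii), not from the parity of $N$. With this choice both sides are forms of degree $2N+i+j$ that agree on the unit sphere, and the proof is complete.
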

\begin{proof}
    For $\rh\in\R$ let $D_\rh:=\operatorname{diag}(1,\rh,\ldots,\rh^{d-1})\in\R^{d\times d}$. The degree hypothesis says precisely that
    \[(\dagger)\qquad G(\rh c)=D_\rh G(c)D_\rh\qquad(\rh\in\R,\ c\in\R^n).\]
    The quadratic module generated by $g_1:=1-\|y\|^2$ and $g_2:=-g_1$ in $\R[y]$ is Archimedean, and $G$ is positive definite on the unit sphere $\{g_1=0\}$. By the matrix version of Putinar's theorem \cite[Theorem 2]{sh}, there are sums of squares $S_0,S_1,S_2\in\R[y]^{d\times d}$ with $G=S_0+g_1S_1+g_2S_2$, i.e.,
    \[G=S_0+(1-\|y\|^2)T\]
    with $T:=S_1-S_2\in\R[y]^{d\times d}$. Write $S_0=\sum_{i=1}^rQ_i^TQ_i$ with $Q_i\in\R[y]^{m_i\times d}$ and choose $k\in\N_0$ with $k\ge d-1$ and $k\ge\deg Q_i$ (the maximal degree of an entry) for all $i$. Let $w$ be an additional variable and set $R_i:=w^kQ_i\left(\frac yw\right)D_w\in\R[y,w]^{m_i\times d}$. Decompose $R_i=E_i(y,w^2)+wO_i(y,w^2)$ into its even and odd parts with respect to $w$, where $E_i,O_i\in\R[y,u]^{m_i\times d}$ for one further variable $u$.

    Now fix $b\in\R^n\setminus\{0\}$ and let $\rh\in\{\|b\|,-\|b\|\}$. Since $\frac b\rh$ lies on the unit sphere, we have $G\left(\frac b\rh\right)=S_0\left(\frac b\rh\right)$, and $(\dagger)$ gives
    \begin{multline*}\rh^{2k}G(b)=\rh^{2k}D_\rh G\left(\frac b\rh\right)D_\rh\\=\sum_{i=1}^r\left(\rh^kQ_i\left(\frac b\rh\right)D_\rh\right)^T\left(\rh^kQ_i\left(\frac b\rh\right)D_\rh\right)=\sum_{i=1}^rR_i(b,\rh)^TR_i(b,\rh).\end{multline*}
    Adding the two identities for $\rh=\|b\|$ and $\rh=-\|b\|$ and dividing by $2$, the mixed terms cancel and we obtain
    \[\|b\|^{2k}G(b)=\sum_{i=1}^r\Bigl(E_i(b,\|b\|^2)^TE_i(b,\|b\|^2)+\|b\|^2\,O_i(b,\|b\|^2)^TO_i(b,\|b\|^2)\Bigr).\]
    Both sides are polynomial in $b$, and the identity holds on the dense set $\R^n\setminus\{0\}$, hence identically. Since $\|y\|^2O^TO=\sum_{j=1}^n(y_jO)^T(y_jO)$, the right hand side is a sum of squares, as desired.
\end{proof}

\begin{thm}\label{StrictTightening}
    Let $p\in\R[x]$ be a strict real zero polynomial. Then
    \[C(p)\setminus Z(p)\subseteq\bigcup_{k\in\N_0}R_k(p),\]
    i.e., every interior point of $C(p)$ lies in some tightening $R_k(p)$.
\end{thm}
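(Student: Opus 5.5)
The plan is to combine Lemma \ref{HermitePD}, Lemma \ref{MatrixReznick} and Proposition \ref{HermiteForm}. Fix $a\in C(p)\setminus Z(p)$ and let $d=\deg p$. If $d=0$ there is nothing to prove, because $p$ is then a nonzero constant and the tightening condition is trivial. So assume $d\ge1$. We may also normalize $p(0)=1$, since neither $C(p)$, $Z(p)$, $L_p$ nor $R_k(p)$ changes under scaling.

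\emph{Step 1.} Consider the matrix $G:=H_a\in\R[y]^{d\times d}$ from Proposition \ref{HermiteForm}. Its entries are $L_p((1+a^Tx)(-y^Tx)^{i+j-2})$. Since $L_p$ sends forms of degree $m$ in $x$ to constants, this entry equals
\[(-1)^{i+j-2}\bigl(L_p((y^Tx)^{i+j-2})+L_p((a^Tx)(y^Tx)^{i+j-2})\bigr),\]
and both summands are forms of degree $i+j-2$ in $y$. So $G$ satisfies the degree hypothesis of Lemma \ref{MatrixReznick}; Proposition \ref{HermiteForm} already records this. Symmetry of $G$ is clear from the definition.

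\emph{Step 2.} By Lemma \ref{HermitePD}, strictness of $p$ together with $a\in C(p)\setminus Z(p)$ gives that $G(b)$ is positive definite for every $b\ne0$. Lemma \ref{MatrixReznick} then provides $k\in\N_0$ and $Q\in\R[y]^{m\times d}$ with $\|y\|^{2k}H_a=Q^TQ$.

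\emph{Step 3.} Let $v=(s_{d-1},\ldots,s_0)^T\in\R[z]^d$ be the vector of elementary symmetric polynomials. By Proposition \ref{HermiteForm},
\[\|y\|^{2k}L_p\bigl((1+a^Tx)(y^Tx-z_1)^2\cdots(y^Tx-z_{d-1})^2\bigr)=v^T\|y\|^{2k}H_av=(Qv)^T(Qv).\]
The right-hand side is a sum of squares in $\R[y,z]$, so $a\in R_k(p)$.

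The substantive difficulty sits entirely in Lemma \ref{MatrixReznick}, which handles entries of non-uniform degree and the singularity of $H_a$ at $y=0$, and in the positive definiteness of Lemma \ref{HermitePD}. With both lemmas available, the remaining points to check are the degree bookkeeping in Step 1 and the case $d=1$. For $d=1$, $H_a$ is the $1\times1$ constant $L_p(1+a^Tx)$, which is positive by Lemma \ref{HermitePD}, so the form is a positive constant and hence a square; thus $k=0$ works there as well.
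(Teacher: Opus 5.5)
Your proposal is correct and follows the paper's own proof. It uses Lemma \ref{HermitePD} and Proposition \ref{HermiteForm} to verify the hypotheses of Lemma \ref{MatrixReznick} for $G=H_a$, then contracts $\|y\|^{2k}H_a=Q^TQ$ with $v=(s_{d-1},\ldots,s_0)^T$; your separate remarks on small degrees are harmless but unnecessary, since the general argument already covers $d=1$ and the section's standing assumption $d\in\N$ excludes $d=0$.
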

\begin{proof}
    Let $d:=\deg p$ and $a\in C(p)\setminus Z(p)$. By Lemma \ref{HermitePD} and Proposition \ref{HermiteForm}, the matrix $G:=H_a$ satisfies the hypotheses of Lemma \ref{MatrixReznick}. Hence there are $k\in\N_0$ and $Q\in\R[y]^{m\times d}$ with $\|y\|^{2k}H_a=Q^TQ$. With $v=(s_{d-1},\ldots,s_0)^T$ as in Proposition \ref{HermiteForm}, we get
    \begin{multline*}\|y\|^{2k}L_p((1+a^Tx)(y^Tx-z_1)^2\cdots(y^Tx-z_{d-1})^2)\\=\|y\|^{2k}v^TH_av=(Qv)^T(Qv)\in\textstyle\sum\R[y,z]^2,\end{multline*}
    so $a\in R_k(p)$.
\end{proof}

\begin{rem}
    The previous theorem is similar to a result in a remark on page 232 in \cite{kpv}. See Subsection \ref{sectionKPV} for more details on this similar approach.
\end{rem}

We will see a possible application of Proposition \ref{HermiteSOSImpliesTighteningExact} in the following subsection. For this application we need another way of counting real zeros of univariate polynomials. This can be done by using the \textit{Bézout matrix}, which is congruent to the Hermite matrix.
\begin{df}\label{dfBezoutMatrix}
    For univariate polynomials, the following construction can be found in \cite[Section 2.1]{kn} and dates back to the 19th century. We need a multivariate version of \cite{kn}. Let $p\in\R[x]$ be a polynomial of degree $d$ with $p(0)\neq0$ and let $q\in\R[x]$ be a polynomial of degree at most $d$. Denote by $P\in\R[t,x]$ and $Q\in\R[t,x]$ the homogenizations of $p$ and $q$, respectively. Then
    \begin{equation*}
        \frac{P(t,x)Q(s,x)-P(s,x)Q(t,x)}{t-s}
    \end{equation*}
    is a polynomial of degree at most $\deg(p)+\deg(q)-2$ in $(s,t)$. Denote by $c_{i,j}\in\R[x]$ the coefficient of $t^{i-1}s^{j-1}$ of this polynomial. The matrix $(c_{i,j})_{i,j=1,\ldots,d}$ is called \emph{Bézout matrix} of $p$ and $q$ and is denoted by $B(p,q)$ or $B(P,Q)$. We have
    \begin{equation}
        (1,t,\ldots,t^{d-1})B(P,Q)(1,t,\ldots,t^{d-1})^T=(\partial_tP)Q-P\partial_tQ.\label{eqWronzkianBezoutConnection}
    \end{equation}

    Consider the rational function $\frac{Q}{P}$ and consider its expansion as a formal power series at infinity
    \begin{equation*}
        \frac{Q}{P}=s_{-1}+s_0t^{-1}+s_1t^{-2}+\ldots.
    \end{equation*}
    Then, we define the Hankel matrix $H(p,q):=H(P,Q):=(s_{i+j-2})_{i,j=1,\ldots,d}$.
    If $P$ is already homogeneous of degree $d$ and $Q$ is already homogeneous of degree at most $d$, the same formulas define $B(P,Q)$ and $H(P,Q)$ directly.
\end{df}

\begin{rem}
    The Hankel matrix $H(P,\partial_tP)$ is the Hermite matrix of Definition \ref{dfHermiteMatrix}.
\end{rem}

\begin{pro}[{\cite[Equations 5 and 6]{kn}}]\label{propHermiteBezoutCongruent}
    Let $d\in\N$ and let $p\in\R[x]$ satisfy $p(0)\ne0$ and $\deg p\le d$. Write $p=p_0+\ldots+p_d$ with $p_k\in\R[x]_k$, setting $p_k:=0$ for $k<0$ and for $k>\deg p$. Let $q\in\R[x]$ have degree at most $d$, and set $S:=(p_{d-i-j+1})_{i,j=1,\ldots,d}$. The matrix $S$ is symmetric, and $\det(S)\in\{\pm p_0^d\}$. If $P$ and $Q$ are the degree-$d$ homogenizations of $p$ and $q$, then
    \begin{equation*}
        B(p,q)=SH(p,q)S.
    \end{equation*}
    The same identity holds when a homogeneous $P\in\R[t,x]$ of degree $d$ and a homogeneous $Q\in\R[t,x]$ of degree at most $d$ are given directly. Set $p(x):=P(1,x)$ and build $S$ from the coefficients of $p$ as above; then $B(P,Q)=SH(P,Q)S$ (see Definition \ref{dfBezoutMatrix}).
\end{pro}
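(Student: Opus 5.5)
The plan is to treat everything as a univariate problem in $t$ over the coefficient ring $\R[x]$, and to obtain the identity by comparing coefficients in a two-variable Laurent expansion of the Bézoutian.

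\emph{Setup and the statements about $S$.} The degree-$d$ homogenization is $P(t,x)=\sum_{k=0}^d p_k t^{d-k}$. So the coefficient of $t^m$ in $P$ is $p_{d-m}$, and the leading coefficient in $t$ is the nonzero constant $p_0=p(0)$. Then $S_{i,j}=p_{d-i-j+1}$ is the coefficient of $t^{i+j-1}$ in $P$. It depends only on $i+j$, hence $S$ is symmetric (a Hankel matrix). It vanishes for $i+j>d+1$ and equals $p_0$ on the antidiagonal $i+j=d+1$. Therefore $S$ is "anti-triangular", and $\det S=\pm p_0^d$, where the sign is that of the permutation reversing the order of $\{1,\ldots,d\}$.

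\emph{Expansion of the Bézoutian.} Since $p_0$ is a unit, $R:=Q/P$ expands in $\R[x]((t^{-1}))$ as $s_{-1}+\sum_{k\ge0}s_kt^{-k-1}$. The starting point is
\[
\frac{P(t)Q(s)-P(s)Q(t)}{t-s}=P(t)P(s)\,\frac{R(s)-R(t)}{t-s}.
\]
The constant $s_{-1}$ cancels in $R(s)-R(t)$. For $m\ge1$ I will use the elementary identity
\[
\frac{s^{-m}-t^{-m}}{t-s}=\sum_{\substack{i,j\ge1\\ i+j=m+1}}t^{-i}s^{-j}.
\]
Summing over $m$ gives, in $\R[x]((t^{-1},s^{-1}))$,
\[
\frac{R(s)-R(t)}{t-s}=\sum_{i,j\ge1}s_{i+j-2}\,t^{-i}s^{-j}.
\]

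\emph{Comparing coefficients.} Multiply this by $P(t)P(s)$ and extract the coefficient of $t^{a-1}s^{b-1}$ for $1\le a,b\le d$. The term $t^{-i}s^{-j}$ contributes the coefficient of $t^{a-1+i}$ in $P$ times the coefficient of $s^{b-1+j}$ in $P$, i.e. $S_{a,i}\,S_{j,b}$. This product is zero unless $i,j\le d$. Hence the coefficient is $\sum_{i,j=1}^d S_{a,i}\,s_{i+j-2}\,S_{j,b}=(SH(p,q)S)_{a,b}$.

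On the other side, the left-hand side is a genuine polynomial of degree at most $d-1$ in each of $s$ and $t$. Its coefficient of $t^{a-1}s^{b-1}$ is $c_{a,b}$ by Definition \ref{dfBezoutMatrix}, and it has no other monomials. Hence $B(p,q)=SH(p,q)S$.

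The version with $P,Q$ given directly as forms is literally the same computation, since only the $t$-coefficients of $P$ and $Q$ enter. Note that $P(1,x)=p$ recovers the $p_k$.

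\emph{Main obstacle.} The only delicate point is the bookkeeping in the formal setting. One must check that multiplying the Laurent series in $t^{-1},s^{-1}$ by the polynomials $P(t)P(s)$ is legitimate, which holds because only finitely many terms contribute to each coefficient. One must also keep the index conventions straight: that $S_{a,i}$ is exactly the coefficient of $t^{a+i-1}$, including the vanishing for $a+i-1>d$. This vanishing is what allows truncating the sums to $i,j\le d$.
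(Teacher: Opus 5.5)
Your proof is correct and complete. Writing the Bézoutian as $P(t)P(s)\,\frac{R(s)-R(t)}{t-s}$ with $R=Q/P\in\R[x]((t^{-1}))$, you expand $\frac{R(s)-R(t)}{t-s}=\sum_{i,j\ge1}s_{i+j-2}t^{-i}s^{-j}$. You then read off the coefficient of $t^{a-1}s^{b-1}$, using that the coefficient of $t^{a-1+i}$ in $P$ is $S_{a,i}$ (zero for $i>d$), and this gives exactly $(SH(p,q)S)_{a,b}$. Your description of $S$ as a Hankel matrix that is anti-triangular with $p_0$ on the antidiagonal settles both symmetry and $\det S=\pm p_0^d$.

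The paper gives no proof of its own here; it only cites Krein--Naimark. Your computation is the classical generating-function derivation of this congruence. Writing it out also shows that homogeneity of $P$ and $Q$ is never used, only that the leading coefficient $p_0$ is a unit of the coefficient ring. This is what justifies the specialization $x\mapsto b$ in the proof of Proposition \ref{HermiteBezoutProposition}, where the paper notes that $p_d(b)$ may vanish.

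One small point follows from your own argument. In the version with $P$ given directly, the hypothesis $P(1,0)\ne0$ has to be read as implicit in ``as above''. Otherwise the expansion defining $H(P,Q)$ does not exist over $\R[x]$.
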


\begin{lem}[Jacobi's formula]\label{Jacobi}
    Let $A_1,\ldots,A_n\in\C^{d\times d}$ be hermitian matrices. Let $a\in\R^n$. Then, denoting $tI_d+x_1A_1+\ldots+x_nA_n$ by $A$,
    \begin{equation*}
        D_{(1,a)}\det(A)=\det(A)\tr(A^{-1}A(1,a)).
    \end{equation*}
\end{lem}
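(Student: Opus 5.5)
Throughout, $A=tI_d+x_1A_1+\ldots+x_nA_n$ is a matrix with entries in $\C[t,x]$, and $A(1,a)=I_d+a_1A_1+\ldots+a_nA_n$ is the constant matrix obtained by substituting $t=1$, $x=a$. The plan is to reduce the claim to the classical first-order expansion of a determinant, working over the field $K:=\C(t,x)$ of rational functions. Over $K$ the matrix $A$ is invertible, because $\det(A)$ has leading term $t^d$ in $t$ and so is a nonzero polynomial. Hence $A^{-1}$ is a matrix of rational functions, and the right-hand side is an element of $K$. The claimed identity is therefore an identity in $K$, and since the left-hand side is a polynomial, it will in fact show that the right-hand side is a polynomial.

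First, I express the directional derivative through a scalar parameter $s$. Set $N:=A(1,a)$. Since $A$ is linear homogeneous in $(t,x)$, we have
\[A(t+s,\,x+sa)=A+sN\]
as matrices over $K[s]$. For any polynomial $f\in\C[t,x]$, the directional derivative $D_{(1,a)}f$ is the coefficient of $s$ in $f(t+s,x+sa)$, by the chain rule or simply by expanding in $s$. Applying this to $f=\det(A)$ gives
\[D_{(1,a)}\det(A)=\text{coefficient of } s \text{ in } \det(A+sN).\]

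Second, I factor inside $K[s]$:
\[\det(A+sN)=\det(A)\det(I_d+sA^{-1}N).\]
For any $M\in K^{d\times d}$, $\det(I_d+sM)=1+s\tr(M)+O(s^2)$. To see this, expand the determinant via the Leibniz formula: every permutation other than the identity picks up at least two off-diagonal entries, each divisible by $s$, and the identity permutation contributes $\prod_i(1+sM_{ii})$. Taking $M=A^{-1}N$ and reading off the coefficient of $s$ yields
\[D_{(1,a)}\det(A)=\det(A)\tr(A^{-1}N)=\det(A)\tr\bigl(A^{-1}A(1,a)\bigr).\]

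There is no serious obstacle here. The only points needing care are interpretive. The inverse $A^{-1}$ must be understood over $\C(t,x)$, or equivalently as $\operatorname{adj}(A)/\det(A)$, in which case the statement reads $D_{(1,a)}\det(A)=\tr(\operatorname{adj}(A)A(1,a))$, which is an identity in $\C[t,x]$. Also, $A(1,a)$ denotes an evaluation, and it coincides with the directional derivative of $A$ precisely because $A$ is linear.
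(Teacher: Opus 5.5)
Your proposal is correct and takes essentially the same approach as the paper, which simply invokes the well-known Jacobi formula. You prove that formula instead of citing it: you expand $\det(A+sN)=\det(A)\det(I_d+sA^{-1}N)$ over $\C(t,x)[s]$ and use $\det(I_d+sM)=1+s\tr(M)+O(s^2)$, and you correctly observe that the hermitian hypothesis is never needed.
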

\begin{proof}
    This follows immediately from the well-known Jacobi's formula about the derivative of the determinant of a matrix function.
\end{proof}

\begin{lem}\label{geomSeries}
    Let $A\in\R[x]^{d\times d}$ be a matrix. Then the inverse of $tI-A$ in the ring $\R[x]((t^{-1}))^{d\times d}$ of matrices over the formal Laurent series in $t^{-1}$ is
    \begin{equation*}
        \sum_{i=0}^\infty A^it^{-i-1}.
    \end{equation*}
\end{lem}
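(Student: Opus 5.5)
The plan is to verify directly that the proposed series is a two-sided inverse of $tI-A$ in $\R[x]((t^{-1}))^{d\times d}$. Before computing, I will check that the series makes sense there. Its coefficient of $t^{-i-1}$ is the matrix $A^i$, which has entries in $\R[x]$. Each power of $t^{-1}$ occurs in exactly one term, so the series defines a matrix of formal Laurent series in $t^{-1}$; no convergence question arises. Multiplication in this ring is well defined because Laurent series in $t^{-1}$ with bounded-above exponents can be multiplied coefficientwise by finite sums. Since $tI$ is a scalar matrix, it commutes with every matrix, so $(tI)(A^it^{-i-1})=A^it^{-i}$.

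The main step is a telescoping computation:
\[
(tI-A)\sum_{i=0}^\infty A^it^{-i-1}=\sum_{i=0}^\infty A^it^{-i}-\sum_{i=0}^\infty A^{i+1}t^{-i-1}.
\]
For the first sum I will reindex the second sum with $j=i+1$, so it becomes $\sum_{j\ge1}A^jt^{-j}$. Comparing the coefficient of each power $t^{-j}$ then leaves only the $j=0$ term, namely $I$. This comparison is legitimate because multiplying a series by $tI-A$, which has only finitely many terms, is computed coefficientwise, and for each fixed power of $t$ only finitely many contributions enter.

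The same computation with the factors in the other order, $\sum_i A^it^{-i-1}\,(tI-A)$, also gives $I$, because $A$ commutes with its own powers. Hence the series is a two-sided inverse. Since $\R[x]((t^{-1}))$ is commutative, this also settles uniqueness: a one-sided inverse of a matrix over a commutative ring is automatically two-sided and unique.

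No step is a real obstacle. The only point needing care is to state explicitly that infinite sums here mean coefficientwise sums with finitely many contributions per coefficient, so that the telescoping rearrangement is justified.
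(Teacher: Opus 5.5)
Your proposal is correct and is essentially the paper's own argument: the paper's proof is the single phrase ``Geometric series,'' and your telescoping verification of $(tI-A)\sum_{i\ge0}A^it^{-i-1}=I$, together with the check that the products in $\R[x]((t^{-1}))$ are well defined, just writes that argument out. Your additional remarks on two-sidedness (since $A$ commutes with its powers) and on uniqueness over a commutative ring are accurate but not strictly needed.
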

\begin{proof}
    Geometric series.
\end{proof}

\begin{pro}\label{HermiteBezoutProposition}
    Let $p\in\R[x]$ be a degree $d$ real zero polynomial with $p(0)=1$ and write $p=\sum_{i=0}^dp_i$, where $p_i\in\R[x]_i$. Let $a\in\R^n$. Set
    \begin{equation*}
        H:=\left(L_p\left((1+a^Tx)(-y^Tx)^{i+j-2}\right)\right)_{i,j=1,\ldots,d}\in\R[y]^{d\times d}
    \end{equation*}
    and $S:=(p_{d-i-j+1})_{i,j=1,\ldots,d}$, where $p_i=0$ if $i<0$. Write $H(x)$ for the matrix in $\R[x]^{d\times d}$ obtained by substituting $x$ for $y$. Then
    \begin{equation*}
        SH(x)S=B(P,D_{(1,a)}P),
    \end{equation*}
    where $P\in\R[t,x]$ is the homogenization of $p$. In particular,
    \begin{equation*}
        D_{(1,a)}PD_{(1,0)}P-PD_{(1,a)}D_{(1,0)}P=(1,\ldots,t^{d-1})SH(x)S(1,\ldots,t^{d-1})^T.
    \end{equation*}
\end{pro}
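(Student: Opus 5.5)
The plan is to reduce the first identity to Proposition \ref{propHermiteBezoutCongruent} with $Q:=D_{(1,a)}P$, which is homogeneous of degree $d-1\le d$. That proposition gives $B(P,Q)=SH(P,Q)S$ with the same matrix $S=(p_{d-i-j+1})_{i,j}$. So it suffices to show $H(P,D_{(1,a)}P)=H(x)$. Concretely, in the expansion
\[\frac{D_{(1,a)}P}{P}=s_{-1}+s_0t^{-1}+s_1t^{-2}+\ldots\]
in $\R[x]((t^{-1}))$, we need $s_k=L_p((1+a^T\xi)(-x^T\xi)^k)$ for all $k\ge0$. Here I rename the variables on which $L_p$ acts to $\xi$, so that $x$ plays the role of $y$.

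Since $D_{(1,a)}=\partial_t+D_{(0,a)}$, I split $\frac{D_{(1,a)}P}{P}$ as the formal logarithmic derivative
\[\partial_t\log P+D_{(0,a)}\log P,\]
working in $\R[x]((t^{-1}))$ with $\log P=d\log t+\log p(x/t)$. The $t$-independent symbol $d\log t$ is harmless, since it is killed by $D_{(0,a)}$ and contributes $d\,t^{-1}$ to $\partial_t\log P$. A cleaner way to organize this is to write $D_{(1,a)}P/P$ directly as a sum of two quotients.

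For the first summand, $P=\det(tI-C)$ with $C$ the companion matrix of $p$. Jacobi's formula (Lemma \ref{Jacobi}, or its formal version) together with Lemma \ref{geomSeries} gives
\[\frac{\partial_tP}{P}=\tr\bigl((tI-C)^{-1}\bigr)=\sum_{k\ge0}\tr(C^k)\,t^{-k-1}.\]
By Proposition \ref{RestrictLFormToLine} applied with $q=(-t)^k$, we have $\tr(C(x)^k)=L_p((-x^T\xi)^k)$.

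For the second summand, Definition \ref{dfriesz} together with the multinomial theorem gives $-\log p(-u)=\sum_{k\ge1}\frac1kL_p((u^T\xi)^k)$. Substituting $u=-x/t$ yields
\[\log p(x/t)=-\sum_{k\ge1}\frac1k\,t^{-k}L_p((-x^T\xi)^k).\]
Differentiating in direction $a$ with respect to $x$ gives
\[\sum_{k\ge1}t^{-k}L_p\bigl((a^T\xi)(-x^T\xi)^{k-1}\bigr).\]
The coefficient of $t^{-k-1}$ is therefore $L_p(a^T\xi(-x^T\xi)^k)$. Adding the two contributions gives $s_k=L_p((1+a^T\xi)(-x^T\xi)^k)$, and $s_{-1}=0$. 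Hence $H(P,D_{(1,a)}P)_{ij}=s_{i+j-2}=H(x)_{ij}$, which proves $SH(x)S=B(P,D_{(1,a)}P)$.

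The \emph{in particular} statement then follows from \eqref{eqWronzkianBezoutConnection} with $Q=D_{(1,a)}P$. That identity gives $(\partial_tP)Q-P\,\partial_tQ$, and I would rewrite this using $\partial_t=D_{(1,0)}$ and the commutation $\partial_tD_{(1,a)}=D_{(1,a)}D_{(1,0)}$ of constant-coefficient derivatives.

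The main obstacle I expect is making the formal manipulations rigorous. The substitution $u=-x/t$ into the power-series identity of Definition \ref{dfriesz}, and the interchange of directional derivative and logarithm in $\R[x]((t^{-1}))$, both need justification. I would handle this as in the proof of Proposition \ref{LFormDifferentiation}: substitute $s=t^{-1}$, pass to $\R[x][[s]]$, and use the injective homomorphism $f\mapsto f(sx)$ together with homogeneity. Alongside this, the signs need careful tracking, in particular the factor $-1$ in $-x^T\xi$ and the role of $p(0)=1$.
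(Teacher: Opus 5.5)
Your argument is correct, and it takes a genuinely different route from the paper's proof.

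\textbf{The paper's route.} It first assumes a global determinantal representation $p=\det(I_d+A)$. It applies Jacobi's formula to $D_{(1,a)}$ as a whole, which gives $D_{(1,a)}P/P=\tr\bigl((I+A(a))(tI+A)^{-1}\bigr)$, and it reads off the Hankel entries via Proposition~\ref{LFormOfRidgePolyTimesLinearPoly}. For general $p$, it evaluates at an arbitrary point $b$ and uses Corollary~\ref{vcor} to obtain hermitian $A,B$ with $p(sa+tb)=\det(I_d+sA+tB)$. It then repeats the same computation pointwise.

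\textbf{Your route.} You split $D_{(1,a)}=\partial_t+D_{(0,a)}$ and work generically in $\R[x]((t^{-1}))$. You handle the $\partial_t$ part with the companion matrix and Proposition~\ref{RestrictLFormToLine}. You handle the $D_{(0,a)}$ part by differentiating the defining logarithmic series of $L_p$ after the substitution $u=-x/t$. The $\partial_t$ part could equally be done by differentiating that same series in $t$.

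\textbf{What each approach buys.} Your proof avoids Helton--Vinnikov and pointwise evaluation entirely. It also never uses that $p$ is real zero, so it establishes $SH(x)S=B(P,D_{(1,a)}P)$ for every polynomial of degree $d$ with $p(0)=1$. The price is the bookkeeping with formal substitutions and derivations, which you rightly plan to handle as in Proposition~\ref{LFormDifferentiation}. Two small points need care on your side:
\begin{enumerate}[(i)]
\item Lemma~\ref{Jacobi} is stated only for hermitian pencils. You therefore need the general identity $\partial_t\det(tI-C)=\tr(\operatorname{adj}(tI-C))$, valid over any commutative ring.
\item Proposition~\ref{RestrictLFormToLine} is stated for real points. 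You must use it as a polynomial identity in the point, which follows by density.
\end{enumerate}
The paper's route is shorter once a determinantal representation is available. It also expresses the entries of $H$ as $\tr\bigl((I_d+A)(-B)^{i+j-2}\bigr)$, which is the form exploited in Lemma~\ref{HermitePD}.
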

\begin{proof}
    Suppose first that we are given a determinantal representation
    \begin{equation*}
        p=\det(I_d+A),\qquad A=x_1A_1+\ldots+x_nA_n,
    \end{equation*}
    with hermitian $A_1,\ldots,A_n\in\C^{d\times d}$. Then $P=\det(tI_d+A)$, and $H(x)_{ij}=\tr((I+A(a))(-A)^{i+j-2})$ by Proposition \ref{LFormOfRidgePolyTimesLinearPoly}.
    By Lemmata \ref{Jacobi} and \ref{geomSeries}, we get
    \begin{align*}
        \frac{D_{(1,a)}P}{P}&=\tr\left((I+A(a))(tI+A)^{-1}\right)\\
        &=\tr\left((I+A(a))\sum_{i=0}^\infty (-A)^it^{-i-1}\right)\\
        &=\sum_{i=0}^\infty\tr\left((I+A(a))(-A)^i\right)t^{-i-1}.
    \end{align*}
    Thus, the Hankel matrix $H(P,D_{(1,a)}P)$ is $H(x)$. Since $D_{(1,a)}P$ is homogeneous of degree $d-1$, Proposition \ref{propHermiteBezoutCongruent} gives $SH(x)S=B(P,D_{(1,a)}P)$.

    In general, it suffices to prove the identity after evaluating $x$ at an arbitrary point $b\in\R^n$, since both sides are matrices of polynomials in $x$. Both the Bézout matrix and the matrix $S$ are compatible with this evaluation, and $(H(b))_{ij}=L_p((1+a^Tx)(-b^Tx)^{i+j-2})$. Choose hermitian $A,B\in\C^{d\times d}$ with $p(sa+tb)=\det(I_d+sA+tB)$ as in Proposition \ref{LFormOfRidgePolyTimesLinearPoly}. Then $P(t,b)=\det(tI_d+B)$, $(D_{(1,a)}P)(t,b)=\det(tI_d+B)\tr((tI_d+B)^{-1}(I_d+A))$ and $(H(b))_{ij}=\tr((I_d+A)(-B)^{i+j-2})$, so the computation above applies verbatim with $A$ in place of $A(a)$ and $B$ in place of $A$. This remains valid when $p_d(b)=0$: the constant term of $t\mapsto P(t,b)$ may vanish, and Proposition \ref{propHermiteBezoutCongruent} does not require it to be nonzero.
\end{proof}

\subsection{Elementary symmetric polynomials}\label{ssElSym}

\begin{notation}
    We denote by $\binom{[n]}{d}$ the set of $d$-element subsets of $[n]:=\{1,\ldots,n\}$. For $S\subseteq\{1,\ldots,n\}$, we write $x^S:=\prod_{i\in S}x_i$. For $d\in\{0,\ldots,n\}$, we write $s_d$ for the $d$th elementary symmetric polynomial $\sum_{S\in\binom{[n]}{d}}x^S$.
\end{notation}

\begin{thm}[{\cite[Theorem 5.5 and Remark 5.7]{kum3}, see also \cite{bra2}}]\label{KummerLMIRepElsym}
    Let $d\in\{1,\ldots,n\}$. Then the hyperbolicity cone $C(s_d,\1)$ is given by the linear matrix inequality
    \begin{equation*}
        B(a)\succeq0\quad(a\in\R^n),
    \end{equation*}
    where
    \begin{equation*}
        B:=\left(\frac{1}{d-\#(S\cap T)}\sum_{k\notin S\cup T}x_k\right)_{S,T\in\binom{[n]}{d-1}}.
    \end{equation*}
    Moreover, we have
    \begin{equation}\label{SDPSolution}
        Bm=s_d\1,
    \end{equation}
    where $m=(x^S)_{S\in\binom{[n]}{d-1}}$ is the vector of all multi-affine monomials of degree $d-1$ in $x$.
\end{thm}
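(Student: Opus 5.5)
The plan is to prove the identity \eqref{SDPSolution} first by a direct combinatorial count, and then to use it, together with the hyperbolicity of $s_d$ and of $s_{d-1}$ with respect to $\1$, to compare the spectrahedron $\{a\mid B(a)\succeq0\}$ with $C(s_d,\1)$. The inclusion $\{B\succeq0\}\subseteq C(s_d,\1)$ looks manageable. The inclusion $C(s_d,\1)\subseteq\{B\succeq0\}$ is the main obstacle, and I would attack it by induction on $n$.

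\emph{The identity $Bm=s_d\1$.} Fix $S\in\binom{[n]}{d-1}$. The $S$-entry of $Bm$ is
\[\sum_T\frac1{d-\#(S\cap T)}\sum_{k\notin S\cup T}x_kx^T.\]
Every monomial occurring here is $x^U$ with $U=T\cup\{k\}\in\binom{[n]}{d}$. For a given $U$, the contributing pairs $(T,k)$ are exactly those with $k\in U\setminus S$ and $T=U\setminus\{k\}$. For each of them $\#(S\cap T)=\#(S\cap U)$, because $k\notin S$. There are $\#(U\setminus S)=d-\#(S\cap U)\ge1$ such pairs. Hence the coefficient of $x^U$ is $1$, which gives $s_d$ in every coordinate.

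\emph{Inclusion $\{B\succeq0\}\subseteq C(s_d,\1)$.} I would first show that $B(\1)\succ0$. The matrix $B(\1)$ lies in the Bose--Mesner algebra of the Johnson scheme $J(n,d-1)$: its $(S,T)$-entry $\frac{n-d-1+\#(S\cap T)}{d-\#(S\cap T)}$ depends only on $\#(S\cap T)$. So its eigenvalues can be computed through the Eberlein polynomials, and I would check that they are positive.

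Then let $a$ satisfy $B(a)\succeq0$. Since $\{B\succeq0\}$ is convex and contains $\1$, every point $b$ of the segment $[\1,a]$ satisfies
\[0\le m(b)^TB(b)m(b)=s_d(b)\,\1^Tm(b)=s_d(b)s_{d-1}(b).\]
By Renegar's derivative theory, $C(s_d,\1)\subseteq C(s_{d-1},\1)$. Suppose the segment left $C(s_d,\1)$. It would cross a boundary point of $C(s_d,\1)$ at which $s_{d-1}>0$, since $s_{d-1}$ vanishes on $\partial C(s_d,\1)$ only at non-simple zeros. There $s_d$ changes sign, which gives a contradiction. The degenerate points, where $s_d$ has a multiple zero, would be handled by a perturbation or closedness argument.

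\emph{Inclusion $C(s_d,\1)\subseteq\{B\succeq0\}$.} It suffices to show $B(a)\succ0$ on the interior of $C(s_d,\1)$ and then pass to the closure. From $Bm=s_d\1$ we know that $B(b)$ is singular at every boundary point $b$ with $m(b)\ne0$, so eigenvalues can only cross zero on $\partial C(s_d,\1)$ or on $Z(\det B)$. To rule out crossings on $Z(\det B)$, I would do induction on $n$ using
\[s_d(x)=s_d(x')+x_ns_{d-1}(x'),\qquad x'=(x_1,\ldots,x_{n-1}).\]
I would split the index sets $S,T$ according to whether they contain $n$. This writes $B$ as a $2\times2$ block matrix whose blocks are, up to shifting the weights $\frac1{d-\#(S\cap T)}$ by one, the matrices for $s_{d-1}(x')$ and $s_d(x')$ plus $x_n$ times explicit positive semidefinite pieces. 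Positivity would then follow from a Schur complement argument combined with the induction hypothesis and the interlacing $C(s_d,\1)\subseteq C(s_{d-1},\1)$. If the bookkeeping of the weights fails, the fallback is to show directly that $\det B(t\1-a)$ has all its roots among the roots of $s_d(t\1-a)$, using the identity $B(b)m(b)=0$ along the line together with the derivative structure. That would make $\det B$ hyperbolic with the same cone as $s_d$.
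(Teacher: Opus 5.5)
Your proof of \eqref{SDPSolution} is correct and complete. The inclusion $\{a\mid B(a)\succeq0\}\subseteq C(s_d,\1)$ can also be finished along your lines, after some repairs. First, the $(S,T)$-entry of $B(\1)$ is $\frac{n-2d+2+\#(S\cap T)}{d-\#(S\cap T)}$, not what you wrote. Second, $B(\1)\succ0$ has a direct proof that avoids Eberlein polynomials. Write $B(\1)=\sum_kB_k$, where $B_k$ lives on the sets avoiding $k$ and equals $\bigl(\frac1{d-\#(S\cap T)}\bigr)=\bigl(\int_0^1u^{\#(S\triangle T)/2}\,du\bigr)$ there. This is a principal submatrix of $\int_0^1\bigotimes_i\left(\begin{smallmatrix}1&\sqrt u\\\sqrt u&1\end{smallmatrix}\right)du\succ0$, and every $S$ avoids some $k$. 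Third, the degenerate exit points disappear once you note that \eqref{SDPSolution} forces $s_d\mid\det B$, whence $\{B\succeq0\}=C(\det B,\1)\subseteq C(s_d,\1)$.

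The genuine gap is the reverse inclusion $C(s_d,\1)\subseteq\{B\succeq0\}$, which your proposal does not prove. This is the actual content of the result; the paper gives no proof and imports Theorem~\ref{KummerLMIRepElsym} from Kummer's work. Saying that eigenvalues of $B$ can only change sign on $Z(\det B)$ is a tautology. What must be shown is that $\det B$ has no zeros in the interior of $C(s_d,\1)$, i.e.\ $C(s_d,\1)\subseteq C(\det B/s_d,\1)$.

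Neither of your routes delivers this. Your block splitting is in fact exact, with no shift of weights: with $x'=(x_1,\ldots,x_{n-1})$,
\[B=\begin{pmatrix}B'_d(x')+x_nP&N(x')\\N(x')^T&B'_{d-1}(x')\end{pmatrix},\qquad P=\Bigl(\tfrac1{d-\#(S\cap T)}\Bigr)_{S,T\in\binom{[n-1]}{d-1}}\succ0,\]
where $B'_d$ and $B'_{d-1}$ are the matrices for $s_d$ and $s_{d-1}$ in $n-1$ variables. But the induction hypothesis only controls the lower right block. The Schur complement $B'_d+x_nP-N(B'_{d-1})^{-1}N^T$ is positive definite iff $x_n$ exceeds the largest root of $\mu\mapsto\det\bigl(B'_d+\mu P-N(B'_{d-1})^{-1}N^T\bigr)$. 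By \eqref{SDPSolution}, $-s_d(x')/s_{d-1}(x')$ is one of these roots, but proving that it is the largest one is just a restatement of the claim.

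The nesting $C(s_d,\1)\subseteq C(s_{d-1},\1)$ cannot supply this on its own, because the cofactor $\det B/s_d$ is in general not a product of $s_{d-1},s_{d-2},\ldots$. For $n=4$, $d=3$ one finds $\det B=\frac18\,s_3\,(3s_1s_2-2s_3)$.

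Your fallback is false as stated. In general $\det B$ has degree $\binom n{d-1}>d$ and contains other factors. For $d=2$ one computes $\det B=\frac n{2^{n-1}}\,s_1^{n-2}s_2$. So for $n=3$ and $a=e_1$, the polynomial $\det B(t\1-a)=\frac34(3t-1)(3t^2-2t)$ has the root $\frac13$, which is not a root of $s_2(t\1-a)=3t^2-2t$.

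The correct target is that, for every $a$, the smallest root of $\det B(t\1-a)$ is a root of $s_d(t\1-a)$. Reaching it requires a genuine argument controlling $\det B/s_d$ on $C(s_d,\1)$, and your proposal contains none.
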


\begin{thm}\label{ElsymBezoutSOS}
    Let $d\in\{1,\ldots,n\}$ and $a\in\R^n$. The Bézout matrix of $s_d(x+t\1)$ and $D_{(1,a)}s_d(x+t\1)$ is of the form
    \begin{equation*}
        B(s_d(x+t\1),D_{(1,a)}s_d(x+t\1))=(n-d+1)Q^TB(a+\1)Q,
    \end{equation*}
    where $B$ is the matrix of the previous theorem and $Q$ is the matrix
    \begin{equation*}
        \left(s_{d-j}(x_S)\right)_{S\in\binom{[n]}{d-1},j\in\{1,\ldots,d\}}.
    \end{equation*}
\end{thm}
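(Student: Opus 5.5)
The plan is to pass from the Bézout matrix to its generating function in two auxiliary variables and to compare bilinear forms. Write $P(t,x):=s_d(x+t\1)$ and $R(t,x):=D_{(1,a)}P$. By Definition \ref{dfBezoutMatrix}, the claim is equivalent to the polynomial identity
\[\frac{P(t,x)R(s,x)-P(s,x)R(t,x)}{t-s}=(n-d+1)\,(1,t,\ldots,t^{d-1})\,Q^TB(a+\1)Q\,(1,s,\ldots,s^{d-1})^T .\]
The first observation is that $Q(1,t,\ldots,t^{d-1})^T=m(x+t\1)$. Indeed, for $S\in\binom{[n]}{d-1}$ we have $\prod_{i\in S}(x_i+t)=\sum_{j=1}^d s_{d-j}(x_S)t^{j-1}$. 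Setting $u:=x+t\1$ and $w:=x+s\1$, so that $u-w=(t-s)\1$, the right hand side becomes $(n-d+1)\,m(u)^TB(a+\1)m(w)$.

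Next I use linearity in $c:=a+\1$. By the chain rule, $R(t,x)=\sum_k(1+a_k)(\partial_ks_d)(u)=\nabla s_d(u)^Tc$, and $B(c)=\sum_kc_kB(e_k)$ is linear in $c$. It therefore suffices to prove, for each fixed $k\in[n]$,
\[s_d(u)\,\partial_ks_d(w)-s_d(w)\,\partial_ks_d(u)=(n-d+1)(t-s)\,m(u)^TB(e_k)m(w).\]
On the left, I write $s_d(u)=u_k s_{d-1}(u')+s_d(u')$ and $\partial_ks_d(u)=s_{d-1}(u')$, where the prime means deleting the $k$th coordinate. The terms containing $u_k$ and $w_k$ then cancel, and the left side becomes $s_d(u')s_{d-1}(w')-s_d(w')s_{d-1}(u')$. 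On the right, $B(e_k)_{S,T}$ equals $1/(d-\#(S\cap T))$ if $k\notin S\cup T$ and $0$ otherwise. This reduces everything to a statement purely about the $n-1$ variables $u'$ and $w'=u'-h\1$, where $h:=t-s$:
\[s_d(u')s_{d-1}(w')-s_d(w')s_{d-1}(u')=(n-d+1)\,h\sum_{\substack{S,T\subseteq[n]\setminus\{k\}\\ \#S=\#T=d-1}}\frac{u'^S\,w'^T}{d-\#(S\cap T)}.\]

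This last identity is the main obstacle. My first attempt is a direct coefficient comparison. I would substitute $w'_i=u'_i-h$, so that $s_r(w')=\sum_{j}(-h)^j\binom{n-1-r+j}{j}s_{r-j}(u')$, expand both sides as polynomials in $h$ and $u'$, and group the monomials $u'^Su'^T$ on the right according to $R:=S\cap T$. The weights $1/(d-\#R)$ should then be matched against the binomial factors arising on the left, with the prefactor $n-d+1$ absorbing them. Note that the weight is exactly $1/(\#(S\setminus T)+1)$, which suggests an integral or telescoping interpretation.

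If the bookkeeping becomes unwieldy, I would use a fallback. Both sides are antisymmetric under swapping $t$ and $s$ and are bilinear in $m(u),m(w)$, so I would differentiate in $t$ at $t=s$. The left side then gives the Wronskian from \eqref{eqWronzkianBezoutConnection}. I would combine this with the polynomial identity $B(x)m(x)=s_d(x)\1$ from \eqref{SDPSolution} (note $B(\1)=\sum_kB(e_k)$) to pin down the remaining degrees of freedom. Failing that, I would verify the identity on the dense set where all coordinates are distinct by induction on $n$, deleting one variable at a time.
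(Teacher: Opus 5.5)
Your setup is correct and is essentially the paper's: the generating-function form of the Bézout matrix, $Q(1,t,\ldots,t^{d-1})^T=m(x+t\1)$, and the reduction to one coordinate $k$ at a time via $D_{(1,a)}P=\sum_k(1+a_k)(\partial_ks_d)(u)$ and $B(a+\1)=\sum_k(1+a_k)B(e_k)$.

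\textbf{The gap.} The claimed cancellation after splitting off the $k$th coordinate is wrong. Since $u_k=x_k+t$ and $w_k=x_k+s$ differ by $h$, the multi-affine expansion gives
\[s_d(u)\,\partial_ks_d(w)-s_d(w)\,\partial_ks_d(u)=h\,s_{d-1}(u')s_{d-1}(w')+s_d(u')s_{d-1}(w')-s_d(w')s_{d-1}(u').\]
Your displayed ``main obstacle'' drops the term $h\,s_{d-1}(u')s_{d-1}(w')$ and is false. For $n=2$, $d=1$, $k=1$ its left side is $u_2-w_2=h$, while its right side is $2h$. So no coefficient comparison can establish it.

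Even for the corrected identity, the proposal contains no actual argument. The coefficient comparison is only announced. The fallback of differentiating at $t=s$ loses information: it only yields the Wronskian $v(t)^TMv(t)$, where $v(t)=(1,t,\ldots,t^{d-1})$. This determines only the antidiagonal sums of the symmetric matrix $M$, and for $d\ge3$ that does not determine $M$.

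\textbf{The missing idea.} Use \eqref{SDPSolution} at the two different points $u$ and $w$, rather than differentiating.
\begin{enumerate}[(1)]
\item Multiply $B(w)m(w)=s_d(w)\1$ from the left by $m(u)^T$.
\item Multiply the transposed identity $m(u)^TB(u)=s_d(u)\1^T$ from the right by $m(w)$.
\item Subtract, and use $B(u)-B(w)=hB(\1)$ (linearity of $B$) and $\1^Tm=s_{d-1}$. This gives
\[h\,m(u)^TB(\1)m(w)=s_d(u)s_{d-1}(w)-s_d(w)s_{d-1}(u).\]
\item Apply this identity in the $n-1$ variables $u',w'$; for $d=n$ both sides are trivially zero. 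The $(n-1)$-variable analogue of $B(\1)$, indexed by $(d-1)$-subsets $S,T\subseteq[n]\setminus\{k\}$, has entries $\frac{n-1-\#(S\cup T)}{d-\#(S\cap T)}$. So it accounts for $s_d(u')s_{d-1}(w')-s_d(w')s_{d-1}(u')$.
\item The term you dropped, $h\,s_{d-1}(u')s_{d-1}(w')=h\sum_{S,T}u'^Sw'^T$, contributes weight $1$. Since $\#(S\cup T)+\#(S\cap T)=2d-2$,
\[1+\frac{n-1-\#(S\cup T)}{d-\#(S\cap T)}=\frac{n-d+1}{d-\#(S\cap T)},\]
which is exactly $(n-d+1)$ times the corresponding entry of $B(e_k)$.
\end{enumerate}
This is precisely the paper's argument. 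Splitting $D_{(1,a)}$ into the $\partial_k$'s instead of into $\partial_t$ and the $\partial_{x_k}$'s is only a cosmetic difference.
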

\begin{proof}
    The following proof shares a lot of similarities with a construction from \cite[p. 261]{knp} (see also the proof of Theorem 6.2 in \cite{kum3}).
    
    Since the map $(f,g)\mapsto B(f,g)$ is bilinear and $B$ is a linear matrix polynomial, we prove the claim by showing the following two statements
    \begin{enumerate}[(a)]
        \item $B(s_d(x+t\1),\partial_ts_d(x+t\1))=(n-d+1)Q^TB(\1)Q$,
        \item $B(s_d(x+t\1),\partial_{x_1}s_d(x+t\1))=(n-d+1)Q^TB(e_1)Q$.
    \end{enumerate}
    
    To prove part (a), we use Equation (\ref{SDPSolution}) from the previous theorem to get
    \begin{align*}
        B(x+s\1)m(x+s\1)=s_d(x+s\1)\1,\\
        m(x+t\1)^TB(x+t\1)=s_d(x+t\1)\1^T.
    \end{align*}
    Now, we multiply by $m(x+t\1)^T$ from the left and by $m(x+s\1)$ from the right, respectively. This, together with the fact that $m^T\1=s_{d-1}$, yields
    \begin{align*}
        m(x+t\1)^TB(x+s\1)m(x+s\1)=s_d(x+s\1)s_{d-1}(x+t\1),\\
        m(x+t\1)^TB(x+t\1)m(x+s\1)=s_d(x+t\1)s_{d-1}(x+s\1).
    \end{align*}
    Subtracting the second from the first equation, we get
    \begin{multline*}
        (s-t)m(x+t\1)^TB(\1)m(x+s\1)\\
        =s_d(x+s\1)s_{d-1}(x+t\1)-s_d(x+t\1)s_{d-1}(x+s\1).\qquad(\ast)
    \end{multline*}
    Notice that $(n-d+1)s_{d-1}(x+t\1)=\partial_ts_d(x+t\1)$ and $Q(1,\ldots,t^{d-1})^T=m(x+t\1)$. Thus, (a) follows from $(\ast)$.

    For (b), consider the polynomial
    \begin{equation*}
        s_d(x+s\1)\partial_{x_1}s_d(x+t\1)-s_d(x+t\1)\partial_{x_1}s_d(x+s\1)
    \end{equation*}
    and note that it does not depend on the variable $x_1$. Using that $s_d$ is multi-affine, we get
    \begin{align*}
        &s_d(x+s\1)\partial_{x_1}s_d(x+t\1)-s_d(x+t\1)\partial_{x_1}s_d(x+s\1)\\
        &=s_d(s,x_2+s,\ldots,x_n+s)s_{d-1}(x_2+t,\ldots,x_n+t)\\
        &-s_d(t,x_2+t,\ldots,x_n+t)s_{d-1}(x_2+s,\ldots,x_n+s)\\
        &=(s-t)s_{d-1}(x_2+s,\ldots,x_n+s)s_{d-1}(x_2+t,\ldots,x_n+t)\\
        &+s_d(x_2+s,\ldots,x_n+s)s_{d-1}(x_2+t,\ldots,x_n+t)\\
        &-s_d(x_2+t,\ldots,x_n+t)s_{d-1}(x_2+s,\ldots,x_n+s)\\
        &\stackrel{(a)}{=}(s-t)\sum_{\substack{\#S=\#T=d-1\\1\notin S\cup T}}(x+s\1)^S(x+t\1)^T\\
        &+(s-t)\sum_{\substack{\#S=\#T=d-1\\1\notin S\cup T}}\frac{1}{d-\#(S\cap T)}((n-1)-\#(S\cup T))(x+s\1)^S(x+t\1)^T\\
        &\stackrel{(\ast\ast)}{=}(s-t)\sum_{\substack{\#S=\#T=d-1\\1\notin S\cup T}}\frac{n-d+1}{d-\#(S\cap T)}(x+s\1)^S(x+t\1)^T\\
        &=(s-t)(n-d+1)\underbrace{(1,\ldots,s^{d-1})Q^T}_{((x+s\1)^S)_{S\in\binom{[n]}{d-1}}}B(e_1)\underbrace{Q(1,\ldots,t^{d-1})^T}_{((x+t\1)^T)_{T\in\binom{[n]}{d-1}}}.
    \end{align*}
    Here, $(\ast\ast)$ follows from
    \begin{equation*}
        n-1-\#(S\cup T)+d-\#(S\cap T)=n+d-1-(\#S+\#T)=n-1+d-(2d-2)=n-d+1
    \end{equation*}
    for all $S,T\in\binom{[n]}{d-1}$.
\end{proof}

\begin{thm}
    Let $d\in\{1,\ldots,n\}$, then
    \begin{equation*}
        R_0(s_d(x+\1))=C(s_d(x+\1)).
    \end{equation*}
\end{thm}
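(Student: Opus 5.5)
We apply Proposition \ref{HermiteSOSImpliesTighteningExact} to $p:=s_d(x+\1)/c$ with $c:=s_d(\1)=\binom nd>0$. Dividing by $c$ changes neither $C(p)$ nor $L_p$, so $R_0$ and $C$ are unaffected. The polynomial $p$ has degree $d$ and satisfies $p(0)=1$. It is real zero by Remark \ref{nonzeroate}(d), because $s_d$ is hyperbolic in direction $\1$. Its degree-$d$ homogenization is $P(t,x)=s_d(x+t\1)/c$. It therefore suffices to show that, for every $a\in C(p)$, the matrix $H_a$ of Proposition \ref{HermiteForm} factors as $Q_a^TQ_a$ with polynomial $Q_a$.

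\textbf{Step 1: express $H_a$ through a Bézout matrix.} Fix $a\in C(p)$. Proposition \ref{HermiteBezoutProposition} gives $SH_a(x)S=B(P,D_{(1,a)}P)$, where $S=(p_{d-i-j+1})_{i,j}$. By bilinearity of the Bézout matrix and Theorem \ref{ElsymBezoutSOS},
\[
B(P,D_{(1,a)}P)=\frac1{c^2}B\bigl(s_d(x+t\1),D_{(1,a)}s_d(x+t\1)\bigr)=\frac{n-d+1}{c^2}\,Q^TB(a+\1)Q .
\]

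\textbf{Step 2: show $B(a+\1)\succeq0$.} By Theorem \ref{KummerLMIRepElsym} it suffices that $a+\1\in C(s_d,\1)$. Since $a\in C(p)$, we have $s_d(\la a+\1)\ne0$ for all $\la\in[0,1)$. The path $\la\mapsto \la a+\1$ starts at $\1$, which lies in the interior of $C(s_d,\1)$. The interior is the connected component of $\1$ in $\{s_d\ne0\}$, so the path stays in the interior for $\la<1$. As $C(s_d,\1)$ is closed, the endpoint $a+\1$ lies in $C(s_d,\1)$. This is the step where some care is needed: we must match the definition of $C(p)$ with the hyperbolicity cone shifted by $\1$, for which we use the description of the interior stated after Definition \ref{dfhbc}. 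Once $B(a+\1)\succeq0$, we write $B(a+\1)=R^TR$ with a real matrix $R$.

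\textbf{Step 3: invert $S$.} The entries of $S$ vanish for $i+j>d+1$, and on the antidiagonal they equal $p_0=1$. Hence $S$ is a symmetric polynomial matrix with $\det S=\pm1$, and $S^{-1}\in\R[x]^{d\times d}$ is again symmetric (this also follows from Proposition \ref{propHermiteBezoutCongruent}). Combining the previous steps,
\[
H_a(x)=\frac{n-d+1}{c^2}\,(RQS^{-1})^T(RQS^{-1}).
\]
Setting $Q_a:=\frac{\sqrt{n-d+1}}{c}\,RQS^{-1}$ and substituting $y$ for $x$, we obtain $H_a=Q_a^TQ_a$ with $Q_a\in\R[y]^{k\times d}$. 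Proposition \ref{HermiteSOSImpliesTighteningExact} then yields $R_0(s_d(x+\1))=C(s_d(x+\1))$.
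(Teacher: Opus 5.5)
Your proposal is correct and follows the paper's proof essentially step by step. You normalize $p$, combine Proposition \ref{HermiteBezoutProposition} with Theorem \ref{ElsymBezoutSOS} and the LMI $B(a+\1)\succeq0$ from Theorem \ref{KummerLMIRepElsym}, invert the unimodular matrix $S$ over $\R[x]$, and conclude with Proposition \ref{HermiteSOSImpliesTighteningExact}; your connectedness argument in Step 2 fills in the implication $a\in C(p)\Rightarrow a+\1\in C(s_d,\1)$, which the paper states without proof.
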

\begin{proof}
    Let $p:=\frac{s_d(x+\1)}{\binom nd}$, so that $p(0)=1$, $C(p)=C(s_d(x+\1))$ and $R_0(p)=R_0(s_d(x+\1))$ (the linear form $L_p$ does not change under scaling of $p$). Since $s_d$ is hyperbolic with respect to $\1$, $p$ is real zero by Remark \ref{nonzeroate}(d), and $a\in C(p)$ if and only if $a+\1\in C(s_d,\1)$. Let $a\in C(p)$. By Theorem \ref{KummerLMIRepElsym}, $B(a+\1)\succeq0$, so $B(a+\1)=R^TR$ for some real matrix $R$, and Theorem \ref{ElsymBezoutSOS} together with the bilinearity of the Bézout matrix gives
    \[B(P,D_{(1,a)}P)=\tbinom nd^{-2}B(s_d(x+t\1),D_{(1,a)}s_d(x+t\1))=Q'^TQ'\]
    with $Q':=\tbinom nd^{-1}\sqrt{n-d+1}\,RQ\in\R[x]^{k\times d}$, where $P=\frac{s_d(x+t\1)}{\binom nd}$ is the homogenization of $p$. By Proposition \ref{HermiteBezoutProposition}, $SH(x)S=Q'^TQ'$, where $S\in\R[x]^{d\times d}$ has determinant $\pm1$ by Proposition \ref{propHermiteBezoutCongruent} and is therefore invertible over $\R[x]$. Hence $H(x)=(Q'S^{-1})^T(Q'S^{-1})$ with $Q'S^{-1}\in\R[x]^{k\times d}$, and substituting $y$ for $x$ gives $H_a=Q_a^TQ_a$ with $Q_a\in\R[y]^{k\times d}$. Proposition \ref{HermiteSOSImpliesTighteningExact} yields the claim.
\end{proof}

\begin{rem}
    Brändén \cite{bra2} gives a spectrahedral representation of $C(s_d(x+\1))$ (see also Theorem \ref{KummerLMIRepElsym}).
\end{rem}

\subsection{Connection to the tightening of Kummer, Plaumann, and Vinzant}\label{sectionKPV}
In \cite{kpv}, Kummer, Plaumann, and Vinzant obtained a similar tightening, but for hyperbolic polynomials instead of real zero polynomials.
They realized that the hyperbolicity cone of a square-free hyperbolic polynomial is a linear slice of the cone of non-negative polynomials, see Theorem 3.1 in \cite{kpv}. Namely, if $p\in\R[x]$ is hyperbolic with respect to $e\in\R^n$ and square-free, then
\begin{equation*}
    C(p,e)=\{a\in\R^n\mid D_apD_ep-pD_aD_ep\geq0\text{ on }\R^n\}.
\end{equation*}
In Section 3, they tighten the non-negativity constraint to get a hierarchy of tightenings of the hyperbolicity cone. If the hyperbolic polynomial is smooth, then they show that the union over all the tightenings captures the whole interior of the hyperbolicity cone (see page 232 in \cite{kpv}).

Proposition \ref{hbrz} gives a way of translating hyperbolic to real zero polynomials. Using this, one can observe that their tightening is related to our tightening via the congruence of the Hankel matrix $H(P,Q)$ and the Bézout matrix $B(P,Q)$ from Proposition \ref{propHermiteBezoutCongruent} and the fact from Equation (\ref{eqWronzkianBezoutConnection}) that
\begin{equation*}
    (1,t,\ldots,t^{d-1})B(P,Q)(1,t,\ldots,t^{d-1})^T=(\partial_tP)Q-P\partial_tQ,
\end{equation*}
using the notation from Definition \ref{dfBezoutMatrix}.

\section*{Declaration of AI usage}
Artificial intelligence (AI) was used for the final polishing of this article.
Indeed, all results, and the arguments for them, come from \cite{sch} and \cite{saw},
against which the article can be checked. There is only one exception: In the first author's
thesis \cite{saw}, the alternative multiplier from Remark \ref{remMultiplier} was used which
lead to an
error in the proof of \cite[Thm. 6.0.16]{saw} which was corrected by inserting
the local Lemma \ref{MatrixReznick} whose proof was found by AI assistance.

\end{document}